\flushbottom \setlength{\parindent}{16pt}
\let\wfs@comment@comment\comment
\let\comment\@undefined
\let\wfs@changes@comment\comment
\let\comment\@undefined
\newcommand\comment{%
    \ifthenelse{\equal{\@currenvir}{comment}}
    {\wfs@comment@comment}
    {\wfs@changes@comment}%
}
\title{Exceptional scattered sequences}
\author[1]{Daniele Bartoli}
\affil[1]{Department of Mathematics and Informatics, University of Perugia, Perugia,  Italy, {\small \texttt{daniele.bartoli@unipg.it}}\vspace*{.3cm}}
\author[2]{Giuseppe Marino}
\affil[2]{Department of Mathematics and Applications ``R. Caccioppoli'', University of Naples ``Federico II'', Napoli, Italy, {\small\texttt{giuseppe.marino@unina.it}}\vspace*{.3cm}}
\author[3]{Alessandro Neri}
\affil[3]{Max-Planck-Institute for Mathematics in the Sciences, Leipzig, Germany, {\small\texttt{alessandro.neri@mis.mpg.de}}\vspace*{.3cm}}
\author[4]{Lara Vicino}
\affil[4]{Department of Applied Mathematics and Computer Science, Technical University of Denmark, Kgs. Lyngby, Denmark, 
{\small\texttt{lavi@dtu.dk}}}
\date{}
\DeclareMathOperator{\dd}{d}
\DeclareMathOperator{\rk}{rk}
\DeclareMathOperator{\Hom}{Hom}
\DeclareMathOperator{\GL}{GL}
\DeclareMathOperator{\End}{End}
\DeclareMathOperator{\ev}{ev}
\DeclareMathOperator{\Gal}{Gal}
\begin{document}
\maketitle

\theoremstyle{definition}
\newtheorem{theorem}{Theorem}[section]
\newtheorem{lemma}[theorem]{Lemma}
\newtheorem{conj}[theorem]{Conjecture}
\newtheorem{remark}[theorem]{Remark}
\newtheorem{corollary}[theorem]{Corollary}
\newtheorem{prop}[theorem]{Proposition}
\newtheorem{definition}[theorem]{Definition}
\newtheorem{result}[theorem]{Result}
\newtheorem{property}[theorem]{Property}
\newtheorem{problem}[theorem]{Open Problem}

\newtheorem{question}[theorem]{Question}

\makeatother
\newcommand{\Prf}{\noindent{\bf Proof}.\quad }
\renewcommand{\labelenumi}{(\alph{enumi})}


\def\B{\mathbf B}
\def\C{\mathbf C}
\def\Q{\mathbf Q}
\def\W{\mathbf W}
\def\a{\mathbf a}
\def\b{\mathbf b}
\def\c{\mathbf c}
\def\d{\mathbf d}
\def\e{\mathbf e}
\def\l{\mathbf l}
\def\v{\mathbf v}
\def\w{\mathbf w}
\def\x{\mathbf x}
\def\y{\mathbf y}
\def\z{\mathbf z}
\def\t{\mathbf t}
\def\cD{\mathcal D}
\def\cC{\mathcal C}
\def\cH{\mathcal H}
\def\cM{{\mathcal M}}
\def\cK{\mathcal K}
\def\cQ{\mathcal Q}
\def\cU{\mathcal U}
\def\cS{\mathcal S}
\def\cT{\mathcal T}
\def\cR{\mathcal R}
\def\cN{\mathcal N}
\def\cA{\mathcal A}
\def\cF{\mathcal F}
\def\cL{\mathcal L}
\def\cB{\mathcal B}
\def\cP{\mathcal P}
\def\cG{\mathcal G}
\def\cGD{\mathcal GD}
\def\mC{\mathcal C}
\def\mU{\mathcal U}

\def\PG{{\rm PG}}
\def\GF{{\rm GF}}
\def\Tr{{\rm Tr}}

\def\Pg{PG(5,q)}
\def\pg{PG(3,q^2)}
\def\ppg{PG(3,q)}
\def\HH{{\cal H}(2,q^2)}
\def\F{\mathbb F}
\def\Ft{\mathbb F_{q^t}}
\def\P{\mathbb P}
\def\Z{\mathbb Z}
\def\V{\mathbb V}
\def\bS{\mathbb S}
\def\G{\mathbb G}
\def\E{\mathbb E}
\def\N{\mathbb N}
\def\K{\mathbb K}
\def\D{\mathbb D}
\def\ps@headings{
 \def\@oddhead{\footnotesize\rm\hfill\runningheadodd\hfill\thepage}
 \def\@evenhead{\footnotesize\rm\thepage\hfill\runningheadeven\hfill}
 \def\@oddfoot{}
 \def\@evenfoot{\@oddfoot}
}
\def\cub{\mathscr C}
\def\cO{\mathcal O}
\def\cur{\mathscr L}
\def\Fqm{{\mathbb F}_{q^m}}
\def\Fq3{{\mathbb F}_{q^3}}
\def\fq{{\mathbb F}_{q}}
\def\Fm{{\mathbb F}_{q^m}}
\def\Fn{{\mathbb F}_{q^n}}

\newcommand{\Fmk}{[n,k]_{q^m/q}}
\newcommand{\Fmkd}{[n,k,d]_{q^m/q}}
\newcommand{\Fmkdd}{[n,k,(d_1,\ldots,d_k)]_{q^m/q}}
\newcommand{\ale}[1]{{\color{blue}[$\star \star$ {\sf Alessandro: #1}]}}
\newcommand{\dan}[1]{{\color{violet}[$\star \star$ {\sf Daniele: #1}]}}
\newcommand{\giu}[1]{{\color{red}[$\star \star$ {\sf Giuseppe: #1}]}}
\newcommand{\lara}[1]{{\color{cyan}[$\star \star$ {\sf Lara: #1}]}}
\newcommand{\Fms}[3]{[#1,#2]_{q^{#3}/q}}

\begin{abstract}
The concept of scattered polynomials is generalized to those of exceptional scattered sequences which are shown to be the  natural algebraic counterpart of $\mathbb{F}_{q^n}$-linear MRD codes. The first infinite family in the first nontrivial case is also provided and equivalence issues are considered. As a byproduct, a new infinite family of MRD codes is obtained.
\end{abstract}

\bigskip

\par\noindent
{\bf Keywords:} Scattered polynomials, linearized polynomials, MRD codes, finite fields\\
\noindent{\bf MSC:} 94B05, 51E20, 94B27, 11T06

\section{Introduction}

Rank-metric codes  were introduced already in the late 70's by Delsarte \cite{delsarte1978bilinear} and then rediscovered by Gabidulin a few years later \cite{gabidulin1985theory}. They attracted many researchers in the last decade, due to their applications in network coding \cite{MR2450762} and cryptography \cite{gabidulin1991ideals,MR3678916}.
Such codes are sets of matrices over a finite field $\fq$ endowed with the rank distance, that is, the distance  between two elements is defined as the rank of their difference. Among them, of particular interest is the family of rank-metric codes whose parameters are optimal, that is, for the given minimum rank, they have the maximum possible cardinality. Such codes are called \emph{maximum rank distance (MRD) codes} and constructing new families is an important and active research task.
From a different perspective, rank-metric codes can also  be seen as sets of (restrictions of) $\mathbb{F}_q$-linear homomorphisms from $(\mathbb{F}_{q^n})^m$ to $\mathbb{F}_{q^n}$ equipped with the rank distance; see Sections \ref{Sec:2.2} and \ref{Sec:2.3}.
With this second point of view, it is evident that multivariate linearized polynomials can be seen as the natural algebraic counterpart of rank-metric codes. In the case of univariate linearized polynomials such a connection was exploited in \cite{sheekey2016new} by Sheekey, where the notion of scattered polynomials was introduced; see also \cite{bartoli2018exceptional}. Let $f \in \mathcal{L}_{n,q}[X]$ be a $q$-linearized polynomial and let $t$ be a nonnegative integer with $t\leq n-1$. Then, $f$ is said to be \emph{scattered of index $t$} if for every $x,y \in \mathbb{F}_{q^n}^*$
\[ \frac{f(x)}{x^{q^t}}=\frac{f(y)}{y^{q^t}}\,\, \Longleftrightarrow\,\, \frac{y}x\in \fq, \]
or equivalently
\begin{equation*} \dim_{\fq}(\ker(f(x)-\alpha x^{q^t}))\leq 1, \,\,\,\text{for every}\,\,\, \alpha \in \mathbb{F}_{q^n}. \end{equation*}
In a more geometrical setting, a scattered polynomial is connected with a scattered subspace of the projective line; see \cite{blokhuis2000scattered}. From a coding theory point of view,  $f$ is scattered of index $t$ if and only if $\cC_{f,t}=\langle x^{q^t}, f(x) \rangle_{\mathbb{F}_{q^n}}$ is a \emph{maximum rank distance (MRD) code} with $\dim_{\mathbb{F}_{q^n}}(\cC_{f,t})=2$.
The polynomial $f$ is said to be \emph{exceptional scattered} of index $t$ if it is scattered of index $t$ as a polynomial in $\mathcal{L}_{\ell n,q}[X]$, for infinitely many $\ell$; see \cite{bartoli2018exceptional}. The classification of exceptional scattered polynomials is still not complete, although it gained the attention of several researchers \cite{Bartoli:2020aa4,bartoli2018exceptional,MR4190573,MR4163074,bartoli2022towards}.

While many families of scattered polynomials have been constructed in recent years \cite{sheekey2016new,lunardon2018generalized,lunardon2000blocking,zanella2019condition,MR4173668,longobardi2021linear,longobardi2021large,NPZ,zanella2020vertex,csajbok2018new,csajbok2018new2,marino2020mrd,blokhuis2000scattered}, only two families of exceptional ones are known:
\begin{itemize}
    \item[(Ps)] $f(x)=x^{q^s}$ of index $0$, with $\gcd(s,n)=1$ (polynomials of so-called pseudoregulus type);
    \item[(LP)] $f(x)=x+\delta x^{q^{2s}}$ of index $s$, with $\gcd(s,n)=1$ and $\mathrm{N}_{q^n/q}(\delta)\ne1$ (so-called LP polynomials).
\end{itemize}

The generalization of the notion of exceptional scattered polynomials -- together with  their connection with $\mathbb{F}_{q^n}$-linear MRD codes of $\Fn$-dimension $2$ -- yielded the introduction of the concept of 
 $\mathbb{F}_{q^n}$-linear MRD codes of \emph{exceptional type}; see \cite{bartoli2021linear}.
An $\mathbb{F}_{q^n}$-linear MRD code $\cC\subseteq\mathcal{L}_{n,q}[X]$ is an \emph{exceptional MRD code} if the rank metric code
\[ \cC_\ell=\langle\mathcal{C}\rangle_{\mathbb{F}_{q^{\ell n}}}\subseteq \mathcal{L}_{\ell n,q}[X] \]
is an MRD code for infinitely many $\ell$.
Only two families of exceptional $\mathbb{F}_{q^n}$-linear MRD codes are known:
\begin{itemize}
    \item[(G)] $\mathcal{G}_{k,s}=\langle x,x^{q^s},\ldots,x^{q^{s(k-1)}}\rangle_{\mathbb{F}_{q^n}}$, with $\gcd(s,n)=1$; see \cite{delsarte1978bilinear,gabidulin1985theory,kshevetskiy2005new};
    \item[(T)] $\mathcal{H}_{k,s}(\delta)=\langle x^{q^s},\ldots,x^{q^{s(k-1)}},x+\delta x^{q^{sk}}\rangle_{\mathbb{F}_{q^n}}$, with $\gcd(s,n)=1$ and $\mathrm{N}_{q^n/q}(\delta)\neq (-1)^{nk}$; see  \cite{sheekey2016new,lunardon2018generalized}. 
\end{itemize}
The first family is known as \emph{generalized Gabidulin codes} and the second one as \emph{generalized twisted Gabidulin codes}, whereas in \cite{bartoli2018exceptional} it has been shown that the only exceptional $\mathbb{F}_{q^n}$-linear MRD codes spanned by monomials are the codes (G), in connection with so-called \emph{Moore exponent sets}. Non-existence results on exceptional MRD codes were provided in \cite[Main Theorem]{bartoli2021linear}.

\medskip

In this paper we introduce the new notions of $h$-scattered sequences and exceptional $h$-scattered sequences  which constitute the right environment for exceptional MRD codes. These $h$-scattered sequences are sequences of multivariate linearized polynomials $\mathcal F=(f_1,\ldots, f_s)\in \cL_{n,q}[X_1,\ldots,X_m]$, such that there exists 
$\mathcal I=(i_1,\ldots, i_m) \in \N^m$ so that the space 
$$ U_{\mathcal I,\mathcal F}:=\{ (x_1^{i_1}, \ldots,x_m^{i_m},f_1(x_1,\ldots,x_m), \ldots,f_s(x_1,\ldots,x_m) ) \, : \, x_1,\ldots,x_m \in \Fn\}$$
is $h$-scattered; see Definitions \ref{Def:ScatteredSequence} and \ref{Def:ExceptionalScatteredSequence}. 
We then focus on the concept of indecomposability of $h$-scattered sequences, which ensures that they cannot be obtained as direct sums of smaller $h$-scattered sequences, and study how this property is preserved under classical and Delsarte dualities.  
Finally we introduce  the sequences of multivariate linearized polynomials 
$$  (X^{q^I}+\alpha X^{q^J}, X^{q^J}+\beta Y^{q^I}+\gamma Y^{q^J}),$$
for $I,J\in\{1,\ldots,n-1\}$ and $\alpha, \beta,\gamma\in \Fn^*$, and study their associated subspaces 
$$    U_{\alpha, \beta, \gamma}^{I,J,n}:=\left\{\left(x,y,x^{q^I}+\alpha y^{q^J},x^{q^J}+\beta y^{q^I} + \gamma y^{q^J}\right) \ : \ x,y\in \F_{q^n}\right\}\subseteq \Fn^4.$$
We show in Theorem \ref{thm:scattered:1} that if a certain polynomial -- which depends on $I,J,\alpha,\beta,\gamma$ -- has no roots in $\Fn$, then this sequence is $1$-scattered.  This condition is also necessary when restricting to $I,J\leq n/4$, as we observe in Theorem \ref{thm:scattered:2}. 
We then estimate the maximum $\fq$-dimension intersection of $U_{\alpha, \beta, \gamma}^{I,J,n}$ with the $2$-dimensional $\Fn$-subspaces of $V(4,q^n)$. As a byproduct, this gives an estimate on all the generalized rank weights of the code $C_{\alpha, \beta, \gamma}^{I,J,n}$ associated with $U_{\alpha, \beta, \gamma}^{I,J,n}$. In particular, we observe that whenever $\max\{I,J\}\leq (n-1)/2$, our construction automatically produces new MRD codes which are inequivalent from the known constructions and whose generalized rank weights are larger than the ones of the known constructions. We finally investigate equivalence and dualities of the $\fq$-subspaces $U_{\alpha, \beta, \gamma}^{I,J,n}$.

The paper is structured as follows. Section \ref{sec:preliminaries} contains the preliminary notions needed throughout the paper. In particular, we  describe algebraic curves over finite fields, multivariate linearized polynomials, rank-metric codes, and the concepts of evasive and scattered subspaces. In Section \ref{sec:scattered_sequences} we introduce $h$-scattered sequences of multivariate linearized polynomials and the concepts of exceptionality and indecomposability. Section \ref{sec:construction} is devoted to the main general family of a scattered sequence of bivariate linearized polynomials, and the study of its properties. Finally, we draw our conclusions in  Section \ref{sec:conclusions}, describing some open problems.

\section{Definitions and preliminary results}\label{sec:preliminaries}

\subsection{Algebraic curves over a finite field}\label{sec:curves}

In this subsection, we collect some preliminary definitions and results on algebraic curves over a finite field.
Let $q=p^h$, where $p$ is a prime and $h>0$ an integer, and denote by $\F_q$ the finite field with $q$ elements. We denote by $\overline{\F}_q$ the algebraic closure of $\F_q$ and by $\F_q[X,Y]$ the ring of polynomials in the variables $X$ and $Y$ with coefficients in $\F_q$. Finally, let $\mathbb{P}^r(\mathbb{F}_q)$ and $\mathbb{A}^r(\mathbb{F}_q)$ denote, respectively, the $r$-dimensional projective and affine space over $\mathbb{F}_q$.
A curve is a variety of dimension $1$ and plane curves are defined by bivariate polynomials $f(X,Y)\in \mathbb{F}_q[X,Y]$.

Let $\mathcal{X}$ be an irreducible algebraic curve in $\mathbb{P}^n(\F_q)$ and
let $\mathcal{X}(\F_q)$ (resp. $\mathcal{X}(\overline{\F}_q)$) denote the set of all the places of $\mathcal{X}$ defined over $\mathbb{F}_q$ (resp. $\overline{\F}_q$). For a more comprehensive introduction to algebraic varieties and curves we refer the interested reader to \cite{HKT,Hartshorne,Stichtenoth}.

We recall now the following result, defining a \emph{Kummer cover} of a plane curve.
\begin{theorem}[\textnormal{\cite[Corollary 3.7.4]{Stichtenoth}}]
\label{thm:kummer}
Let $\mathcal{X}: F(X,Y)=0$ be an absolutely irreducible plane curve defined over $\F_q$ and let $\overline{\mathcal{X}}$ be its projective closure in $\mathbb{P}^2(\F_q)$. Let $m$ be a positive integer such that $\mathrm{gcd}(m,p)=1$ and $f(X,Y)\in \F_q(\overline{\mathcal{X}})$ be such that there exists a place $Q\in \overline{\mathcal{X}}(\overline{\F}_q)$ with $\mathrm{gcd}(v_Q(f),m)=1$, where $v_Q(f)$ denotes the valuation at $Q$ of the rational function $f$. 
Let $\mathcal{X}'$ be the space curve defined by the following equations
\begin{equation*}
    \mathcal{X}': \begin{cases}
    F(X,Y) = 0\\
    Z^m = f(X,Y)
    \end{cases}
\end{equation*}
and let $\overline{\mathcal{X}'}$ be its projective closure in $\mathbb{P}^3(\F_q)$.
Then $\overline{\mathcal{X}'}$ is an absolutely irreducible curve defined over $\F_q$ and it is called a \emph{Kummer cover} of $\overline{\mathcal{X}}$. Correspondingly, $\mathcal{X}'$ is called a Kummer cover of $\mathcal{X}$.
\end{theorem}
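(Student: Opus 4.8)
The statement is a curve-theoretic repackaging of the standard theory of Kummer extensions of algebraic function fields, so the plan is to translate it into that language and then verify the two properties hidden in the phrase ``absolutely irreducible'': that the function field of $\overline{\mathcal{X}'}$ is genuinely a field, and that its field of constants does not grow. Throughout, the distinguished place $Q$ with $\gcd(v_Q(f),m)=1$ does all the work.

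\emph{Step 1 (reduction to function fields over $\overline{\F}_q$).} Since $\mathcal{X}$ is absolutely irreducible, the function field $\overline{K}:=\overline{\F}_q(\overline{\mathcal{X}})$ of $\overline{\mathcal{X}}$ over $\overline{\F}_q$ is a field whose full field of constants is $\overline{\F}_q$, and $f$ is a nonzero element of $\overline{K}$ (nonzero because $\gcd(v_Q(f),m)=1$ forces $v_Q(f)$ finite). Adjoining a root $Z$ of $T^m-f$ produces the ring $\overline{K}[Z]/(T^m-f)$; the space curve $\overline{\mathcal{X}'}$ is absolutely irreducible exactly when this ring is an integral domain, i.e. a field, which happens if and only if the binomial $T^m-f$ is irreducible over $\overline{K}$.

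\emph{Step 2 (binomial irreducibility criterion).} Because $\gcd(m,p)=1$ and $\overline{\F}_q\subseteq\overline{K}$ contains all $m$-th roots of unity, the classical criterion for binomials applies in its simplest form: $T^m-f$ is irreducible over $\overline{K}$ precisely when $f$ is not a $d$-th power in $\overline{K}$ for any prime $d\mid m$ (the exceptional condition occurring when $4\mid m$ is absorbed once fourth roots of unity are present). It therefore suffices to exclude that $f=w^d$ for some $w\in\overline{K}$ and some prime $d\mid m$.

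\emph{Step 3 (the place $Q$ forces irreducibility and rigidity of the constants).} Suppose $f=w^d$ with $d\mid m$ prime. Taking the valuation at $Q$ gives $v_Q(f)=d\,v_Q(w)$, so $d\mid v_Q(f)$; combined with $d\mid m$ this yields $d\mid\gcd(v_Q(f),m)=1$, which is impossible. Hence $f$ is a $d$-th power for no prime $d\mid m$, so $T^m-f$ is irreducible and $\overline{K}(Z)/\overline{K}$ is a (cyclic Kummer) extension of degree $m$. As $\overline{\F}_q$ is algebraically closed and $\overline{K}(Z)/\overline{K}$ is finite, the constant field of $\overline{K}(Z)$ is algebraic over $\overline{\F}_q$ and hence equals $\overline{\F}_q$, which is exactly the absolute irreducibility of $\overline{\mathcal{X}'}$. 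Finally, both defining equations $F(X,Y)=0$ and $Z^m=f(X,Y)$ have coefficients in $\F_q$, so $\overline{\mathcal{X}'}$ is defined over $\F_q$.

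The only genuinely delicate point is the absolute (geometric) irreducibility, and it is instructive to see why the coprimality hypothesis is precisely the right one. The ramification index of $Q$ in the cover equals $e=m/\gcd(v_Q(f),m)=m$, so $Q$ is totally ramified. Since any extension of the constant field is unramified at every place, the presence of a totally ramified place structurally forbids the constants from growing; this is the conceptual reason why the single condition $\gcd(v_Q(f),m)=1$ simultaneously guarantees the irreducibility of $T^m-f$ and the rigidity of the constant field.
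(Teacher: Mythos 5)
The paper offers no proof of its own here: the statement is imported verbatim as a quoted result, with the proof delegated to the cited source (Stichtenoth, Prop.\ 3.7.3 and Cor.\ 3.7.4 on Kummer extensions). Your argument is exactly that standard proof --- pass to the function field over $\overline{\F}_q$, apply the binomial irreducibility criterion, and use $\gcd(v_Q(f),m)=1$ to rule out $f$ being a $d$-th power for any prime $d\mid m$ --- and it is correct, including the observation that the $4\mid m$ exception is vacuous or absorbed since $\gcd(m,p)=1$ and $\overline{\F}_q$ contains the relevant roots of unity.
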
 

Note that  Theorem \ref{thm:kummer} applies in particular if $\mathcal{X}$ is a line, in which case $\mathcal{X}'$ is a plane curve.
As it is shown in \cite[Corollary 3.7.4]{Stichtenoth}, if the genus of $\mathcal{X}$ is given, then it is possible to easily compute the genus of a Kummer cover $\mathcal{X}'$.

Finally, we conclude this subsection stating the well-known Hasse-Weil bound for the number of $\F_q$-rational places of a curve defined over $\F_q$.

\begin{theorem}[Hasse-Weil]
\label{thm:hasse-weil}
Let $\mathcal{X}$ be an absolutely irreducible algebraic curve in $\mathbb{P}^n(\F_q)$ of genus $g$. Then the set $\mathcal{X}(\F_q)$ of its $\mathbb{F}_q$-rational places satisfies
\begin{equation}\label{EQ:HW2}
    q + 1 - 2g\sqrt{q} \leq |\mathcal{X}(\F_q)| \leq q + 1 + 2g\sqrt{q}.
\end{equation}
\end{theorem}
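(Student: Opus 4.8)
The plan is to derive the bound from the standard structure of the zeta function of $\mathcal{X}$ together with Weil's Riemann Hypothesis for curves over finite fields. First I would pass to the smooth model and introduce the zeta function
\[ Z(\mathcal{X},t)=\exp\left(\sum_{r\geq 1} N_r\,\frac{t^r}{r}\right), \qquad N_r:=|\mathcal{X}(\F_{q^r})|, \]
and recall the rationality result of F.~K.~Schmidt: $Z(\mathcal{X},t)$ is a rational function of the shape
\[ Z(\mathcal{X},t)=\frac{P(t)}{(1-t)(1-qt)}, \]
with $P(t)\in\Z[t]$, $P(0)=1$ and $\deg P=2g$. The functional equation $P(t)=q^{g}t^{2g}P\bigl(1/(qt)\bigr)$ then lets me factor $P(t)=\prod_{i=1}^{2g}(1-\omega_i t)$, with the reciprocal roots occurring in pairs $\omega_i\leftrightarrow q/\omega_i$.

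Comparing the logarithmic derivatives of the two expressions for $Z(\mathcal{X},t)$ and matching coefficients of $t^r$ yields the explicit point count
\[ N_r=q^r+1-\sum_{i=1}^{2g}\omega_i^{\,r}\qquad\text{for every } r\geq 1. \]
Granting the Riemann Hypothesis $|\omega_i|=\sqrt{q}$ for all $i$, the case $r=1$ and the triangle inequality give
\[ \bigl|\,|\mathcal{X}(\F_q)|-(q+1)\,\bigr|=\Bigl|\sum_{i=1}^{2g}\omega_i\Bigr|\leq \sum_{i=1}^{2g}|\omega_i|=2g\sqrt{q}, \]
which is exactly \eqref{EQ:HW2}. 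Thus, modulo the rationality and functional equation of $Z(\mathcal{X},t)$ (which are formal manipulations of the counting series), everything reduces to the single eigenvalue estimate $|\omega_i|=\sqrt q$.

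The hard part is precisely this Riemann Hypothesis. Here I would follow Weil's geometric argument on the surface $S=\mathcal{X}\times\mathcal{X}$ over $\overline{\F}_q$. The diagonal $\Delta$ and the graph $\Gamma$ of the $q$-Frobenius are divisor classes on $S$, and since the fixed points of Frobenius are exactly the $\F_q$-rational places one has $N_1=\Delta\cdot\Gamma$ as an intersection number (the transversality being a consequence of separability). The crucial input is the positivity of the intersection form on the part of $\mathrm{NS}(S)\otimes\mathbb{R}$ orthogonal to the two fibre classes, i.e.\ the Hodge index theorem (equivalently, the Castelnuovo--Severi inequality for correspondences). Applying this positivity to a combination $a\Delta+b\Gamma$ and optimizing over $a,b$ produces a Cauchy--Schwarz type bound forcing $|N_r-q^r-1|\leq 2g\,q^{r/2}$ for all $r$, and feeding this back into the factorization of $P$ pins down $|\omega_i|=\sqrt{q}$. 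An entirely elementary alternative, avoiding the surface, is the Stepanov--Bombieri method, which bounds $N_r$ directly by exhibiting auxiliary functions with prescribed high-order vanishing at the rational points. In either route, I expect the genuine obstacle to be the positivity of the intersection pairing (respectively, the construction and degree control of the auxiliary polynomial); all remaining steps are formal consequences of the shape of $Z(\mathcal{X},t)$.
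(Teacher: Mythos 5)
The paper does not prove this statement at all: Theorem \ref{thm:hasse-weil} is quoted as a classical result (with the standard reference to \cite{Stichtenoth} implicit in the surrounding discussion), so there is no in-paper argument to compare against. Your sketch is the standard textbook route and is correct as an outline: rationality of $Z(\mathcal{X},t)$ with numerator $P(t)$ of degree $2g$, the functional equation, the identity $N_r=q^r+1-\sum_i\omega_i^r$ obtained by comparing logarithmic derivatives, and then the triangle inequality once $|\omega_i|=\sqrt q$ is known. You also correctly pass to the smooth model first, which matters here because the theorem counts \emph{places} (a birational invariant) rather than points of the possibly singular embedded curve --- a distinction the paper itself emphasizes in the remark following the theorem. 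Two caveats. First, everything before the Riemann Hypothesis is indeed formal, but the step ``feeding this back into the factorization of $P$ pins down $|\omega_i|=\sqrt q$'' hides a real (if standard) argument: from $|N_r-q^r-1|\le 2g\,q^{r/2}$ for all $r$ one only gets $|\omega_i|\le\sqrt q$ via a radius-of-convergence or power-sum estimate, and one must then invoke the pairing $\omega_i\leftrightarrow q/\omega_i$ from the functional equation to upgrade this to equality; it is worth saying this explicitly. Second, the genuinely hard content --- positivity of the intersection form (Hodge index / Castelnuovo--Severi) in Weil's argument, or the construction of the auxiliary function in Stepanov--Bombieri --- is only named, not carried out, so what you have is a correct reduction of Hasse--Weil to one of these classical inputs rather than a self-contained proof. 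For the purposes of this paper, where the theorem is used as a black box, that is entirely appropriate.
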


If the curve $\mathcal{X}$ is singular, there is some ambiguity in defining what an $\mathbb{F}_q$-rational point of $\mathcal{X}$ actually is. For this reason often the function field version is also used; see \cite{Stichtenoth}. The difference between the number of $\mathbb{F}_q$-rational points of a non-singular model $\mathcal{X}^{\prime}\subset \mathbb{P}^r(\mathbb{F}_q)$, for some integer $r$, of $\mathcal{X}$ and the number of  ``true" $\mathbb{F}_q$-rational  points $(x_0:y_0:t_0)\in \mathbb{P}^2(\mathbb{F}_q)$ of $\mathcal{X}$ is at most $(d-1)(d-2)/2-g$; see \cite[Lemma 9.55]{HKT}. We refer the interested readers to \cite[Section 9.6]{HKT}, where other relations are investigated. Thus, since we will be interested in solutions of particular equations (which correspond to centers of $\mathbb{F}_q$-rational places) we can roughly say that for an absolutely irreducible curve  defined over $\mathbb{F}_q$ the condition  $q + 1 - 2g\sqrt{q}>0$ still yields the existence of at least one $\mathbb{F}_q$-rational  point $(x_0:y_0:t_0)\in \mathbb{P}^2(\mathbb{F}_q)$ (seen as the center of at least one  $\mathbb{F}_q$-rational place).

\subsection{The space of multivariate linearized polynomials}\label{Sec:2.2}
Linearized polynomials over finite fields are important objects with a rich literature, for both a theoretical and an applied point of view. 
Formally, one defines the set of $q$-polynomials over a finite field $\Fn$ as

$$ L_{n,q}[X]:= \Big\{ \sum_{j=0}^t a_jX^{q^j} \,:\, a_j \in \Fn\Big\}.$$

This set can be naturally considered as a ring $(\mathcal L, +, \circ)$, endowed with standard polynomial addition ($+$) and  polynomial map composition ($\circ$). The importance of this ring is due to the fact that the polynomial evaluation map provides an $\fq$-algebra isomorphism
\begin{equation}\label{eq:poly_isomorphism}\mathcal L_{n,q}[X] := L_{n,q}[X]/(X^{q^n}-X)\cong \End_{\fq}(\Fn).
\end{equation}

In this section we study a natural extension of the ring of linearized polynomials to the multivariate setting. Define the set of formal multivariate linearized polynomials on $m$ variables as the $\Fn$-vector space over the (infinite) basis
$$ \{X_i^{q^j} \, :\, 1 \leq i \leq m, j \in \mathbb N\}.$$
In order to mimic the action of the generator of $\Gal(\Fn/\fq)$, we then reduce this vector space modulo the relations 
$$ \{X_i^{q^n}-X_i =0\, : \, 1 \le i \le m \}. $$
In this way, we obtain the following $\Fn$-vector space.
Let $\underline{X}:=(X_1,\ldots,X_m)$ be a vector of indeterminates and let 
\begin{align*} \cL_{n,q}[\underline{X}]:= & \Big\{ \sum_{i=1}^m\sum_{j=0}^{n-1} f_{i,j}X_i^{q^j} \,:\, f_{i,j}\in\Fn \Big\}
= \left\langle \big\{ X_i^{q^j} \, :\, 1\le i \le m, 0\le j \le n-1\big\} \right\rangle_{\Fn}. 
\end{align*}

The following result gives a linearized polynomial representation of spaces of rectangular matrices.

\begin{prop}\label{prop:isomorphism}
The polynomial evaluation map given by
$$ 
\begin{array}{rcl} 
\cL_{n,q}[\underline{X}]&\longrightarrow  &\Hom_{\fq}((\Fn)^m,\Fn) \\
 f & \longmapsto & (v \longmapsto f(v))
\end{array}$$
is an isomorphism of $\fq$-vector spaces.
\end{prop}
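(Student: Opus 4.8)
The plan is to verify that the evaluation map, call it $\Phi$, is a well-defined $\fq$-linear homomorphism, and then to conclude via injectivity together with a dimension count, since the source and the target have the same finite $\fq$-dimension. This is the standard ``injectivity $+$ equal dimensions'' strategy, with the univariate case serving as the engine.

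First I would check that $\Phi$ lands in the right space and is $\fq$-linear. For a representative $f=\sum_{i=1}^m\sum_{j=0}^{n-1}f_{i,j}X_i^{q^j}$ and a vector $v=(v_1,\ldots,v_m)\in(\Fn)^m$ one has $f(v)=\sum_{i,j}f_{i,j}v_i^{q^j}$. Each building block $v\mapsto v_i^{q^j}$ is the composition of a coordinate projection with a power of the Frobenius automorphism of $\Fn/\fq$, hence is $\fq$-linear; thus $f(v)$ is an $\fq$-linear function of $v$ and $\Phi(f)\in\Hom_{\fq}((\Fn)^m,\Fn)$. Since $f\mapsto f(v)$ is additive and commutes with scalars from $\Fn$ (in particular from $\fq$), the map $\Phi$ is $\fq$-linear. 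Well-definedness is immediate, because the chosen generators already have $q$-degrees at most $n-1$ and the evaluation takes place in $\Fn$, where $x^{q^n}=x$, so the defining reductions $X_i^{q^n}-X_i$ are respected.

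Next I would compute the dimensions. As an $\Fn$-vector space, $\cL_{n,q}[\underline{X}]$ has the displayed basis $\{X_i^{q^j}:1\le i\le m,\,0\le j\le n-1\}$, so $\dim_{\Fn}\cL_{n,q}[\underline{X}]=mn$ and hence $\dim_{\fq}\cL_{n,q}[\underline{X}]=mn^2$. On the other side, $\dim_{\fq}(\Fn)^m=mn$ and $\dim_{\fq}\Fn=n$, whence $\dim_{\fq}\Hom_{\fq}((\Fn)^m,\Fn)=mn\cdot n=mn^2$. The two dimensions coincide, so it suffices to prove that $\Phi$ is injective.

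For injectivity, suppose $\Phi(f)=0$, i.e.\ $f(v)=0$ for every $v\in(\Fn)^m$. Writing $g_i(X):=\sum_{j=0}^{n-1}f_{i,j}X^{q^j}\in\mathcal L_{n,q}[X]$, evaluation at vectors supported on a single coordinate forces $g_i(x)=0$ for all $x\in\Fn$ and all $i$. By the univariate evaluation isomorphism \eqref{eq:poly_isomorphism}—equivalently, because a nonzero $q$-linearized polynomial of $q$-degree at most $n-1$ has at most $q^{n-1}<q^n$ roots—each $g_i$ is the zero element of $\mathcal L_{n,q}[X]$, so every coefficient $f_{i,j}$ vanishes and $f=0$. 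Hence $\Phi$ is injective, and by the dimension count it is an isomorphism of $\fq$-vector spaces. The only genuine point requiring care is the reduction to the univariate setting, and this is handled cleanly by restricting to inputs supported on one coordinate; the remainder is bookkeeping.
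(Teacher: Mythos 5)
Your proof is correct and follows essentially the same route as the paper's: both reduce to the univariate isomorphism \eqref{eq:poly_isomorphism} by splitting a multivariate linearized polynomial into its coordinate pieces. The only difference is presentational --- the paper chains abstract isomorphisms through the direct sum $\bigoplus_{i=1}^m\cL_{n,q}[X_i]$, whereas you verify injectivity of the evaluation map directly and conclude by equality of $\fq$-dimensions, which if anything makes more explicit that the specific map in the statement (and not merely some isomorphism) does the job.
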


\begin{proof}
By definition, we have that
$$\cL_{n,q}[\underline{X}]\cong \bigoplus_{i=1}^m\cL_{n,q}[X_i],$$
 as $\fq$-vector space. Combining it with \eqref{eq:poly_isomorphism}, we obtain
\begin{equation*}\cL_{n,q}[\underline{X}]\cong \bigoplus_{i=1}^m \Hom_{\fq}(\Fn,\Fn)\cong \Hom_{\fq}((\Fn)^m,\Fn) \cong \fq^{n \times nm}.
\end{equation*}
\end{proof}

Due to Proposition \ref{prop:isomorphism}, we can define the \textbf{rank} of a multivariate linearized polynomial in $\cL_{n,q}[\underline{X}]$ as the $\fq$-{rank} of the associated $\fq$-linear homomorphism from $(\Fn)^m$ to $\Fn$. Like for the univariate case, it is immediate to see that rank-one linearized multivariate polynomials can all be expressed in terms of the field trace. In the sequel, let $\Tr_{{q^n}/q}$ denote the \textbf{trace function} of $\Fn$ over $\fq$.

\begin{lemma}\label{lem:rankone}
 $$\{f \in \cL_{n,q}[\underline{X}] \,:\, \rk(f)=1\}=\{\alpha \Tr_{q^n/q}(v \underline{X}^\top ) \,: \alpha \in \Fn^*, v \in (\Fn)^m\setminus \{0\}\}. $$
\end{lemma}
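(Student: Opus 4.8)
The plan is to prove the two inclusions separately; the whole statement reduces to the nondegeneracy of the trace bilinear form over $\Fn$.

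First I would establish the inclusion $\supseteq$. Fix $\alpha\in\Fn^*$ and $v=(v_1,\ldots,v_m)\in(\Fn)^m\setminus\{0\}$, and set $f=\alpha\Tr_{q^n/q}(v\underline{X}^\top)$. Expanding $\Tr_{q^n/q}(v_iX_i)=\sum_{j=0}^{n-1}v_i^{q^j}X_i^{q^j}$ shows at once that $f\in\cL_{n,q}[\underline{X}]$. To compute its rank, I pass through Proposition \ref{prop:isomorphism} and read off the associated homomorphism $x\longmapsto\alpha\Tr_{q^n/q}(\sum_i v_ix_i)$. Since $v\neq 0$, the $\fq$-linear map $x\mapsto\sum_i v_ix_i$ is surjective onto $\Fn$, and $\Tr_{q^n/q}$ is surjective onto $\fq$; hence the image of $f$ is exactly $\alpha\fq$, a one-dimensional $\fq$-subspace of $\Fn$. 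Therefore $\rk(f)=1$, as required.

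For the inclusion $\subseteq$, suppose $f\in\cL_{n,q}[\underline{X}]$ has $\rk(f)=1$. By Proposition \ref{prop:isomorphism} the image of the associated homomorphism is a one-dimensional $\fq$-subspace of $\Fn$, say $\alpha\fq$ with $\alpha\in\Fn^*$. Composing $f$ with the coordinate $\fq$-isomorphism $\alpha\fq\to\fq$ sending $\alpha c\mapsto c$, I can write $f(x)=\alpha\, g(x)$ for a unique $\fq$-linear form $g:(\Fn)^m\to\fq$. The key step is then to show that every such $g$ is of the shape $x\mapsto\Tr_{q^n/q}(w\,x^\top)$ for some $w\in(\Fn)^m$. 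I would argue this by a dimension count: the assignment $w\longmapsto\big(x\mapsto\Tr_{q^n/q}(w\,x^\top)\big)$ is an $\fq$-linear map from $(\Fn)^m$ into $\Hom_{\fq}((\Fn)^m,\fq)$, both spaces having $\fq$-dimension $nm$, and it is injective because the trace form on $\Fn$ is nondegenerate — if $\Tr_{q^n/q}(w\,x^\top)=0$ for all $x$, then restricting to one coordinate at a time forces each $w_i=0$. Being an injective $\fq$-linear map between equidimensional spaces, it is a bijection, so $g(x)=\Tr_{q^n/q}(w\,x^\top)$ for some $w$, whence $f(x)=\alpha\Tr_{q^n/q}(w\,x^\top)$. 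Finally, since $\rk(f)=1\neq 0$ we have $\alpha\neq 0$ and $w\neq 0$, so $f$ lies in the right-hand set.

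The only mildly delicate point is the representation of an arbitrary $\fq$-linear functional on $(\Fn)^m$ as a trace form; everything else is either a direct computation of the image or a routine dimension argument. Since this representation rests solely on the standard nondegeneracy of $\Tr_{q^n/q}$ over finite fields, I do not expect any genuine obstacle.
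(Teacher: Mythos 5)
Your proof is correct. The paper does not actually prove this lemma --- it is stated with only the remark that, as in the univariate case, it is ``immediate to see'' that rank-one multivariate linearized polynomials are trace forms --- and your argument is precisely the standard one being alluded to: the inclusion $\supseteq$ by computing the image $\alpha\fq$ directly, and the inclusion $\subseteq$ by factoring a rank-one map through its one-dimensional image and representing the resulting $\fq$-linear functional on $(\Fn)^m$ as $x\mapsto\Tr_{q^n/q}(w\,x^\top)$ via nondegeneracy of the trace and a dimension count. The one implicit step worth making explicit is that the identity $f=\alpha\Tr_{q^n/q}(w\,\underline{X}^\top)$, obtained at the level of functions, lifts to an identity in $\cL_{n,q}[\underline{X}]$ because the evaluation map of Proposition \ref{prop:isomorphism} is an isomorphism; with that observation your proof is complete.
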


Consider the $\Fn$-bilinear form on $\cL_{n,q}[\underline{X}]$, given by $f\star g:=\sum_{i,j}f_{i,j}g_{i,j}$, where
$$ f=\sum_{i=1}^m\sum_{j=0}^{n-1}f_{i,j}X_i^{q^j}, \qquad g=\sum_{i=1}^m\sum_{j=0}^{n-1}g_{i,j}X_i^{q^j}.$$

\begin{lemma}
Let $f \in \cL_{n,q}[\underline{X}]$, and let $\alpha \in \Fn^*$ and $v \in (\Fn)^m$. Then 
$$ f \star (\alpha \Tr_{q^n/q}(v \underline{X}^\top ))=\alpha f(v).$$
\end{lemma}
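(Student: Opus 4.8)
The plan is to compute explicitly the coordinates of the linearized polynomial $g:=\alpha\Tr_{q^n/q}(v\underline{X}^\top)$ with respect to the basis $\{X_i^{q^j}\}$, then to feed these into the definition of the bilinear form $\star$ and recognize the outcome as $\alpha f(v)$. The whole argument is a direct coordinate computation, so the only point requiring care is the bookkeeping of the Frobenius powers.

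First I would expand the trace. Writing $v=(v_1,\ldots,v_m)$ we have $v\underline{X}^\top=\sum_{i=1}^m v_iX_i$, and using additivity of the trace together with $\Tr_{q^n/q}(w)=\sum_{j=0}^{n-1}w^{q^j}$ and the Frobenius identity $(v_iX_i)^{q^j}=v_i^{q^j}X_i^{q^j}$, one obtains
\[
\alpha\,\Tr_{q^n/q}(v\underline{X}^\top)=\sum_{i=1}^m\sum_{j=0}^{n-1}\bigl(\alpha v_i^{q^j}\bigr)X_i^{q^j}.
\]
This exhibits $g$ as an element of $\cL_{n,q}[\underline{X}]$ with coordinates $g_{i,j}=\alpha v_i^{q^j}$. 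I would note here that, because the trace already runs over exactly the exponents $0\le j\le n-1$, no reduction modulo the relations $X_i^{q^n}-X_i$ is needed and the coordinates are unambiguously defined.

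Next, writing $f=\sum_{i,j}f_{i,j}X_i^{q^j}$ and applying the definition of the $\Fn$-bilinear form, I would compute
\[
f\star g=\sum_{i=1}^m\sum_{j=0}^{n-1}f_{i,j}\,g_{i,j}=\alpha\sum_{i=1}^m\sum_{j=0}^{n-1}f_{i,j}\,v_i^{q^j}.
\]
Finally, evaluating $f$ at $v$ amounts to the substitution $X_i\mapsto v_i$, hence $X_i^{q^j}\mapsto v_i^{q^j}$, which gives $f(v)=\sum_{i,j}f_{i,j}v_i^{q^j}$. Comparing the two displayed expressions yields $f\star g=\alpha f(v)$, as claimed. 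I expect no genuine obstacle; the statement is essentially the assertion that, under the isomorphism of Proposition~\ref{prop:isomorphism}, the form $\star$ pairs $f$ against the rank-one polynomial of Lemma~\ref{lem:rankone} by evaluation, and the verification is the elementary calculation above.
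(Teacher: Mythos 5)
Your proposal is correct and follows essentially the same route as the paper: a direct coordinate computation identifying the coefficients of $\alpha\Tr_{q^n/q}(v\underline{X}^\top)$ as $\alpha v_i^{q^j}$ and pairing them against those of $f$. The only cosmetic difference is that the paper first reduces to $\alpha=1$ by linearity of $\star$, whereas you carry $\alpha$ through the computation.
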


\begin{proof}
Since $ f \star (\alpha \Tr_{q^n/q}(v \underline{X}^\top ))=\alpha  f \star ( \Tr_{q^n/q}(v \underline{X}^\top ))$, it is enough to prove it for $\alpha=1$. This is a straightforward computation, since, writing $v=(v_1,\ldots, v_m)$ and  $f=\sum_{i,j}f_{i,j}X_i^{q^j}$, we have 
$$  f \star ( \Tr_{q^n/q}(v \underline{X}^\top ))=\sum_{i=1}^m\sum_{j=0}^{n-1}f_{i,j}v_i^{q^j}=f(v).$$
\end{proof}

\subsection{Rank-Metric Codes}\label{Sec:2.3}
Since we have seen in Proposition \ref{prop:isomorphism}  that the space of multivariate linearized polynomials over $\Fn$ is isomorphic to the space of $n\times nm$ matrices over $\fq$, we can actually study rank-metric codes in $\cL_{n,q}[\underline{X}]$. Here, we define the \textbf{rank distance}  to be the distance $\dd_{\rk}$ induced by the rank:
$$ \dd_{\rk}(f,g):=\rk(f-g).$$

\begin{definition}
An $\Fn$-linear rank-metric code $\cC$ is an $\Fn$-subspace of $\cL_{n,q}[\underline{X}]$, endowed with the rank metric. The \textbf{dimension} of $\cC$ is $k=\dim_{\Fn}(\cC)$ and its \textbf{minimum rank distance} is the integer
$$d=\dd_{\rk}(\cC):=\min\{\rk(f) \,:\, f \in \cC\setminus\{0\}\}.$$
\end{definition}

The parameters of an $\Fn$-linear rank-metric code in $\cL_{n,q}[\underline{X}]$ must satisfy the following  inequality, known as the Singleton-like bound, which was shown by Delsarte in \cite{delsarte1978bilinear}:
\begin{equation}\label{eq:sing_bound}
     k \leq m(n-d+1).
\end{equation}
Codes meeting \eqref{eq:sing_bound} with equality are called \textbf{maximum rank distance (MRD) codes}.

Let $\cC\subseteq \cL_{n,q}[\underline{X}]$ be an $\Fn$-linear code. The \textbf{dual code} is $$\cC^\perp=\{ f \in \cL_{n,q}[\underline{X}] \, :\, f\star g = 0 \mbox{ for all } g \in \cC\}.$$

Apart from a classical representation as matrices over $\fq$, rank-metric codes are also usually represented as spaces of vectors over the extension field $\Fn$, especially when they have an inherited $\Fn$-linearity.  The way to connect our codes in $\cL_{n,q}[\underline{X}]$ with those in $V(nm,q^n)$ is briefly described as follows. Let us fix an $\fq$-basis $(\beta_1,\ldots,\beta_n)$ of $\Fn$, and take the $\fq$-basis of $(\Fn)^m$ given by 
\begin{equation}\label{eq:basis}\cB:=(\beta_je_i)_{\substack{1\leq i \leq m,\\ 1 \leq j \leq n}},\end{equation}
where $\{e_1,\ldots,e_m\}$ is the canonical $\mathbb{F}_{q^n}$-basis of $(\mathbb{F}_{q^n})^m$.

Define the map
$$\begin{array}{rccl}\ev_{\cB}:& \cL_{n,q}[\underline{X}] & \longrightarrow & (\Fn)^{nm} \\
&f & \longmapsto & (f(\beta_je_i))_{\substack{1\leq i \leq m,\\ 1 \leq j \leq n}}.\end{array}$$
From Proposition \ref{prop:isomorphism}, we immediately deduce the following.
\begin{corollary}
The map $\ev_{\cB}$ is an $\Fn$-linear isomorphism.
\end{corollary}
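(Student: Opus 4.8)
The plan is to reduce the statement to Proposition \ref{prop:isomorphism}, as the sentence preceding the corollary already signals. I would first record that $\ev_{\cB}$ is $\Fn$-linear. The $\Fn$-vector space structure on $\cL_{n,q}[\underline{X}]$ scales a polynomial by multiplying each coefficient $f_{i,j}$ by $\lambda\in\Fn$, so $(\lambda f)(v)=\lambda f(v)$ for every $v\in(\Fn)^m$; together with additivity of evaluation in $f$, this shows that the coordinatewise map $f\mapsto(f(\beta_je_i))$ preserves sums and $\Fn$-scalar multiples. Note that this is \emph{linearity in the polynomial} $f$, and is independent of the $\fq$-linearity of the evaluated map $v\mapsto f(v)$ in its argument; keeping these two actions separate is the only point requiring a little care.

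Next I would compare dimensions. The defining generating set $\{X_i^{q^j}:1\le i\le m,\ 0\le j\le n-1\}$ is an $\Fn$-basis of $\cL_{n,q}[\underline{X}]$, so $\dim_{\Fn}\cL_{n,q}[\underline{X}]=nm=\dim_{\Fn}(\Fn)^{nm}$. It therefore suffices to prove that $\ev_{\cB}$ is injective.

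For injectivity, the key observation is that $\cB=(\beta_je_i)$ is an $\fq$-basis of the $nm$-dimensional $\fq$-space $(\Fn)^m$, since the $\beta_j$ form an $\fq$-basis of $\Fn$ and the $e_i$ the canonical $\Fn$-basis of $(\Fn)^m$. Suppose $\ev_{\cB}(f)=0$, so that $f$ vanishes on every vector of $\cB$. Viewing $f$ under the isomorphism of Proposition \ref{prop:isomorphism} as an element of $\Hom_{\fq}((\Fn)^m,\Fn)$, an $\fq$-homomorphism that is zero on an $\fq$-basis is the zero map, whence $f=0$ by the injectivity of that isomorphism. Thus $\ev_{\cB}$ is an injective $\Fn$-linear map between $\Fn$-spaces of equal finite dimension, hence an $\Fn$-linear isomorphism. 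There is no substantial obstacle in this argument: the entire content lies in invoking Proposition \ref{prop:isomorphism} and in recognizing $\cB$ as an $\fq$-basis of $(\Fn)^m$.
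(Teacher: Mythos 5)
Your argument is correct and is exactly the deduction the paper intends when it says the corollary follows "immediately" from Proposition \ref{prop:isomorphism}: you use the $\Fn$-linearity of evaluation in the coefficients, the fact that $\cB$ is an $\fq$-basis of $(\Fn)^m$ so that vanishing of $\ev_{\cB}(f)$ forces the associated homomorphism (and hence $f$) to be zero, and the equality of $\Fn$-dimensions. No gaps; this matches the paper's (implicit) proof.
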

Let $\cG=(g_1,\ldots, g_k)$ be an $\Fn$-basis of $\cC$. We define the $\fq$-space
$$ U_{\cG}:=\{(g_1(x_1,\ldots,x_m), \ldots,g_k(x_1,\ldots,x_m) ) \, : \, x_1,\ldots,x_m \in \Fn \}\subseteq V(k,q^n). $$

\begin{definition}
 Let $\cC$ be a $k$-dimensional $\Fn$-linear code, and let $\cG$ be a basis of $\cC$.  The  \textbf{effective length} of  $\cC$ is $\ell(\cC):=\dim_{\fq}(U_{\cG})$. 
The code $\cC$ is \textbf{nondegenerate} if  $\ell(\cC)=nm$. 
\end{definition}

\begin{remark}
 The effective length of a code is well-defined. Indeed, while the $\fq$-space $U_{\cG}$ depends on the choice of the $\Fn$-basis $\cG$ of $\cC$, its $\fq$-dimension does not. If $\cG'$ is another $\Fn$-basis of $\cC$, then ${\cG'}={\cG}A$ for some $A\in \GL(k,q^n)$, and hence $U_{\cG'}=U_{\cG}A$, which leaves the $\fq$-dimension of $U_{\cG}$ fixed.  
\end{remark}

\begin{remark}
 The definition of effective length and nondegeneracy of a code $\cC$ in $\cL_{n,q}[\underline{X}]$ are equivalent to those for $\Fn$-linear rank-metric codes in $V(nm,q^n)$. Indeed, let us fix $\cG$ to be an $\Fn$-basis of $\cC$  and take $\cB$ as an $\fq$-basis of $(\Fn)^m$ of the form \eqref{eq:basis}. Then, a basis of $\ev_{\cB}(\cC)$ is given by $\ev_{\cB}(\cG)$, and if we put these vectors as the rows of a generator matrix $G$, we then have that  the $\fq$-span of the columns of $G$ is exactly $U_{\cG}$. Thus, this coincides with the notion of effective length and nondegeneracy of $\Fn$-linear rank metric codes in $V(nm,q^n)$; see e.g. \cite{ABNR22}.
\end{remark}

\begin{prop}\label{prop:nondegenerate}
Let $\cC\subseteq \cL_{n,q}[\underline{X}]$ be an $\Fn$-linear rank-metric code. The following are equivalent.
 \begin{enumerate}
\item  $\cC$ is nondegenerate.
\item For any $\Fn$-basis $(g_1,\ldots, g_k)$  of $\cC$, it holds that $$ \bigcap_{i=1}^k\ker(g_i)=\{0\}.$$
\item $$ \bigcap_{f\in\cC}\ker(f)=\{0\}.$$ 
 \item $ \dd_{\rk}(\cC^\perp)>1$.
 \end{enumerate}
\end{prop}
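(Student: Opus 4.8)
The plan is to fix an $\Fn$-basis $\cG=(g_1,\ldots,g_k)$ of $\cC$ and to read off all four conditions from the single $\fq$-linear evaluation map
\[
\phi\colon (\Fn)^m\longrightarrow (\Fn)^k,\qquad x\longmapsto (g_1(x),\ldots,g_k(x)),
\]
which is $\fq$-linear because each $g_i$ corresponds to an element of $\Hom_{\fq}((\Fn)^m,\Fn)$ by Proposition \ref{prop:isomorphism}. By construction $\mathrm{Im}(\phi)=U_{\cG}$ and $\ker(\phi)=\bigcap_{i=1}^k\ker(g_i)$, so the rank--nullity theorem over $\fq$ yields $\dim_{\fq}(U_{\cG})+\dim_{\fq}\big(\bigcap_{i=1}^k\ker(g_i)\big)=nm$. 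Since nondegeneracy means exactly $\dim_{\fq}(U_{\cG})=nm$, the equivalence of (1) and (2) follows at once.

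For (2) $\Leftrightarrow$ (3) I would show that the intersection $\bigcap_{i=1}^k\ker(g_i)$ is in fact independent of the chosen basis, because it coincides with $\bigcap_{f\in\cC}\ker(f)$. The inclusion $\bigcap_{f\in\cC}\ker(f)\subseteq\bigcap_{i}\ker(g_i)$ is immediate since each $g_i\in\cC$; conversely, writing an arbitrary $f\in\cC$ as an $\Fn$-linear combination $f=\sum_i c_ig_i$ and using that evaluation respects the $\Fn$-scalar structure, namely $f(x)=\sum_i c_ig_i(x)$, shows that $x\in\bigcap_i\ker(g_i)$ forces $f(x)=0$. Hence the two intersections are equal, condition (2) does not depend on the basis, and (2) and (3) assert the same thing.

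The substantive step is (3) $\Leftrightarrow$ (4), where I would invoke the description of rank-one polynomials in Lemma \ref{lem:rankone} together with the trace-form identity proved immediately after it. A nonzero rank-one element of $\cC^\perp$ has the shape $h=\alpha\Tr_{q^n/q}(v\underline{X}^\top)$ with $\alpha\in\Fn^*$ and $v\in(\Fn)^m\setminus\{0\}$; using the symmetry of $\star$ and the identity $g\star h=\alpha\,g(v)$, such an $h$ lies in $\cC^\perp$ precisely when $g(v)=0$ for every $g\in\cC$, i.e.\ when $v\in\bigcap_{g\in\cC}\ker(g)$. Thus $\cC^\perp$ contains a nonzero rank-one element if and only if $\bigcap_{g\in\cC}\ker(g)\neq\{0\}$. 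As $\dd_{\rk}(\cC^\perp)>1$ is exactly the absence of nonzero rank-one codewords in $\cC^\perp$, this is equivalent to condition (3).

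I expect the only delicate points to be bookkeeping: verifying that polynomial evaluation is $\Fn$-linear in the (2) $\Leftrightarrow$ (3) step, and correctly translating ``minimum distance greater than one'' into ``no nonzero rank-one codeword'' in the (3) $\Leftrightarrow$ (4) step, with the harmless convention $\dd_{\rk}(\{0\})=+\infty$ covering the case $\cC=\cL_{n,q}[\underline{X}]$. The genuinely new input is Lemma \ref{lem:rankone} and the pairing identity $g\star h=\alpha\,g(v)$, which convert the kernel condition into a statement about the dual minimum distance; everything else reduces to rank--nullity and elementary $\fq$-linear algebra.
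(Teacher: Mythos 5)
Your proposal is correct and follows essentially the same route as the paper's proof: rank--nullity applied to the evaluation map $v\mapsto(g_1(v),\ldots,g_k(v))$ for $(a)\Leftrightarrow(b)$, and Lemma \ref{lem:rankone} together with the pairing identity $f\star(\alpha\Tr_{q^n/q}(v\underline{X}^\top))=\alpha f(v)$ for $(c)\Leftrightarrow(d)$. The only difference is that you spell out the basis-independence argument for $(b)\Leftrightarrow(c)$, which the paper dismisses as clear.
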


\begin{proof}
\underline{$(b)\Longleftrightarrow (c)$:} Clear.

\underline{$(a)\Longleftrightarrow (b)$:} Consider the $\fq$-linear map
$$\begin{array}{rccl}\psi_\cG:&(\Fn)^m &\longrightarrow & (\Fn)^k \\
&v & \longmapsto & (g_1(v),\ldots,g_k(v)).
\end{array}$$
Then, by the rank-nullity theorem we have 
$$ \dim_{\fq}(\mathrm{im}(\psi_\cG))+\dim_{\fq}(\ker(\psi_{\cG}))=\dim_{\fq}(U_\cG)+\dim_{\fq}\Big(\bigcap_i\ker(g_i)\Big)=nm,$$
from which we derive the equivalence.

 \underline{$(c)\Longleftrightarrow (d)$:}  Let $h\in \cC^\perp\setminus\{0\}$. By Lemma \ref{lem:rankone}, $h$ has rank one if and only if  $h=\alpha \Tr_{q^n/q}(v \underline{X}^\top)$. Furthermore, 
 for every $f\in \cC$ we have
 $$0=f\star h=\alpha f(v).$$
 Hence, there exists $h\in\cC^\perp$ of rank one if and only if there exists a nonzero $v\in\bigcap_{f\in\cC} \ker(f)$.
 \end{proof}

\subsection{Scattered and evasive subspaces}

In this section we recall the notion of evasiveness and scatteredness of subspaces in $V(k,q^n)$, and how they are related to rank-metric codes.

\begin{definition}
Let $h,r,k,n$ be positive integers, such that $h<k$ and $h \le r$. An $\fq$-subspace $U\subseteq V(k,q^n)$ is said to be \textbf{$(h,r)$-evasive} if for every $h$-dimensional $\Fn$-subspace  $H\subseteq V(k,q^n)$, it holds $\dim_{\fq}(U\cap H)\leq r$. When $h=r$, an  $(h,h)$-evasive subspace is called \textbf{$h$-scattered}. Furthermore, when $h=1$,  a $1$-scattered subspace is simply called \textbf{scattered}.
\end{definition}

Scattered subspaces were originally introduced by Blokhuis and Lavrauw in \cite{blokhuis2000scattered}. They were later generalized for every $h$ in \cite{csajbok2021generalising}. The more general notion of evasive subspaces was instead introduced in \cite{bartoli2021evasive}, although similar notions can be found in  \cite{pudlak2004pseudorandom,guruswami2011linear,dvir2012subspace,guruswami2016explicit}. 

For what concerns $h$-scattered subspaces, there is a well-known bound on their $\fq$-dimension. Namely, an $h$-scattered subspace $U\subseteq V(k,q^n)$ satisfies
\begin{equation}\label{eq:scattered_bound}\dim_{\fq}(U)\leq \frac{kn}{h+1};\end{equation}
see \cite{blokhuis2000scattered,csajbok2021generalising}. An $h$-scattered subspace meeting \eqref{eq:scattered_bound} with equality is called a \textbf{maximum $h$-scattered subspace}.

Without loss of generality, we can restrict to study only $\fq$-subspaces $U\subseteq V(k,q^n)$ such that $\langle U\rangle_{\Fn}=V(k,q^n)$. Indeed, if this is not the case, there exists an $\Fn$-hyperplane $H\cong V(k-1,q^n)$ containing $U$, and hence we can restrict to study $U$ as an $\fq$-subspace of $V(k-1,q^n)$. Thus, from now on, we will always assume that $\langle U \rangle_{\Fn}=V(k,q^n)$.

With this assumption,  there is a natural one-to-one correspondence between $\GL(r,q)$-equivalence classes of
$k$-dimensional $\Fn$-linear rank-metric codes in $V(r,q^n)$ and $\GL(k,q^n)$-equivalence classes of $\fq$-subspaces of $V(k,q^n)$ of $\fq$-dimension $r$. This was developed in \cite{randrianarisoa2020geometric}; see also \cite{ABNR22}. Here, we rephrase it in terms of $\Fn$-linear rank-metric codes  in $\cL_{n,q}[\underline{X}]$. 

We first start defining the $\GL(nm,q)$-equivalence in this framework. Fix an $\fq$-basis $(\beta_1,\ldots,\beta_n)$ of $\Fn$. Then every $f\in\cL_{n,q}[\underline{X}]$, considered as an element of  $\Hom_{\fq}((\fq)^{nm},\Fn)$, can be written as  
$$f\Big(\sum_{j}\beta_jX_{1,j},\ldots, \sum_{j}\beta_jX_{m,j}\Big)=\tilde{f}(\beta_jX_{i,j})_{\substack{1\leq i \leq m,\\ 1 \leq j \leq n}}.$$
In this way, we can easily observe that $\GL(nm,q)$ naturally acts on $\tilde{f}$ and  thus induces an action on $\cL_{n,q}[\underline{X}]$ which preserves the rank. 

Let $\mathfrak  U(nm,k)_{q^n/q}$ denote the set of $\GL(k,q^n)$-equivalence classes $[U]$ of $nm$-dimensional $\fq$-subspaces of $V(k,q^n)$, and let $\mathfrak  C(nm,k)_{q^n/q}$ denote the set of $\GL(nm,q)$-equivalence classes $[\cC]$ of nondegenerate $k$-dimensional $\Fn$-linear codes in $\cL_{n,q}[\underline{X}]$. One can define the maps
 $$\begin{array}{rccc}\Phi: & \mathfrak  C(nm,k)_{q^n/q} &\longrightarrow &\mathfrak  U(nm,k)_{q^n/q}\\
 & [\langle g_1,\ldots,g_k\rangle_{\Fn}] & \longmapsto & [U_{\mathcal G}] \end{array}, $$
where $\mathcal G=(g_1,\ldots,g_k)$,
and 
 $$\begin{array}{rccc}\Psi: & \mathfrak  U(nm,k)_{q^n/q} &\longrightarrow &\mathfrak  C(nm,k)_{q^n/q} \\
 & [\langle u_1, \ldots, u_{nm}\rangle_{\fq}] & \longmapsto & [\ev_{\cB}^{-1}(\mathrm{rowsp}( u_1^\top \mid \ldots \mid u_{nm}^\top))] \end{array}.$$

Note that, the map $\Psi$ does not depend on the choice of the basis $\cB$, since any other $\fq$-basis $\cB'$ of $(\Fn)^m$ can be obtained via the action of $\GL(nm,q)$, and hence it gives an equivalent code.

 \begin{theorem}[\textnormal{\cite{randrianarisoa2020geometric}}]\label{thm:correspondence_codes_systems}
  The maps $\Phi$ and $\Psi$ are well-defined and they are one the inverse of the other. Hence, they define a one-to-one correspondence between equivalence classes of nondegenerate $k$-dimensional $\Fn$-linear codes in $\cL_{n,q}[\underline{X}]$ and equivalence classes of $\fq$-subspaces of $V(k,q^n)$ of $\fq$-dimension $nm$.
 \end{theorem}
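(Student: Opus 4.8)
The plan is to prove the statement in three stages: first check that $\Phi$ and $\Psi$ are well-defined on equivalence classes, then that they land in the correct sets (nondegenerate codes of $\Fn$-dimension $k$, respectively subspaces of $\fq$-dimension $nm$ with full $\Fn$-span), and finally that they are mutually inverse. The crucial bookkeeping device is the identity already recorded in the second Remark preceding the statement: if $\cG=(g_1,\ldots,g_k)$ is an $\Fn$-basis of $\cC$ and $G$ is the $k\times nm$ matrix over $\Fn$ whose $i$-th row is $\ev_{\cB}(g_i)$, then $U_{\cG}$ is exactly the $\fq$-span $\langle G_{\cdot 1},\ldots,G_{\cdot nm}\rangle_{\fq}$ of the columns of $G$. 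This is what lets one pass freely between the row picture (the code, recovered as $\ev_{\cB}^{-1}(\mathrm{rowsp}(G))$) and the column picture (the subspace $U_{\cG}$).

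For well-definedness of $\Phi$ I would treat the two freedoms separately. Changing the $\Fn$-basis of $\cC$ by $A\in\GL(k,q^n)$ replaces $\cG$ by $\cG A$ and hence $U_{\cG}$ by $U_{\cG}A$, as already observed in the first Remark, so the $\GL(k,q^n)$-class $[U_{\cG}]$ is unchanged. Replacing $\cC$ by a $\GL(nm,q)$-equivalent code $\cC\circ\phi_M$, where $\phi_M$ is the $\fq$-automorphism of $(\Fn)^m$ induced by $M\in\GL(nm,q)$ through $\cB$, replaces each $g_i$ by $g_i\circ\phi_M$; since $\phi_M$ is bijective, the image set defining $U_{\cG\circ\phi_M}$ equals $U_{\cG}$ \emph{exactly}. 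Thus $\Phi$ is constant on $\GL(nm,q)$-orbits and descends to equivalence classes. Symmetrically, for $\Psi$ I would check that changing the $\fq$-basis of $U$ by $N\in\GL(nm,q)$ multiplies $G$ on the right by $N$, which after $\ev_{\cB}^{-1}$ yields a $\GL(nm,q)$-equivalent code, while replacing $U$ by $UB$ with $B\in\GL(k,q^n)$ multiplies $G$ on the left by $B^\top$ and hence leaves $\mathrm{rowsp}(G)$, and thus the code, literally unchanged; independence of the choice of $\cB$ is the remark already made just before the statement.

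Next I would verify the codomain conditions via the nondegeneracy/full-span dictionary (which is essentially Proposition \ref{prop:nondegenerate}). A code $\cC$ is nondegenerate exactly when $\dim_{\fq}(U_{\cG})=nm$; since $G$ then has $nm$ columns spanning the $nm$-dimensional space $U_{\cG}$, those columns form an $\fq$-basis of $U_{\cG}$, so $\Phi$ lands among the $nm$-dimensional subspaces. Conversely, starting from $U$ with $\langle U\rangle_{\Fn}=V(k,q^n)$, the $\Fn$-column span of $G$ equals $\langle U\rangle_{\Fn}=V(k,q^n)$, so $G$ has $\Fn$-rank $k$, its rows are $\Fn$-linearly independent, and $\ev_{\cB}^{-1}(\mathrm{rowsp}(G))$ is a genuine $k$-dimensional code; it is nondegenerate because its associated subspace is again $U$, of $\fq$-dimension $nm$.

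Finally, the mutual-inverse property falls out of the column/row identity. For $\Phi\circ\Psi$: in computing $\Psi([U])$ the columns of $G$ are by construction an $\fq$-basis of $U$, so applying $\Phi$ recovers the column span, namely $U$. For $\Psi\circ\Phi$: given $\cC$ with basis $\cG$, the columns of $G$ already form an $\fq$-basis of $U_{\cG}$ by nondegeneracy, so I may use precisely that basis when applying $\Psi$; then $\ev_{\cB}^{-1}(\mathrm{rowsp}(G))=\langle g_1,\ldots,g_k\rangle_{\Fn}=\cC$. The only real friction I anticipate is the consistent handling of transposes and of which side each group acts on ($\GL(nm,q)$ on the domain and thus on the columns, $\GL(k,q^n)$ on the codomain $V(k,q^n)$ and thus on the rows), so that ``same subspace/same code'' versus ``equivalent subspace/equivalent code'' is tracked correctly throughout; once the conventions are fixed, the remaining linear algebra is routine.
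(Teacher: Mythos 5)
Your proof is correct. The paper itself gives no argument for this statement---it is quoted from the reference \cite{randrianarisoa2020geometric} and merely rephrased in the setting of $\cL_{n,q}[\underline{X}]$---so there is no in-paper proof to compare against; your verification supplies exactly the details the authors delegate to the citation, and it rests on the same row-space/column-space dictionary (the columns of the matrix $G$ with rows $\ev_{\cB}(g_i)$ spanning $U_{\cG}$ over $\fq$) that the paper records in the remarks immediately preceding the theorem. The bookkeeping of the two group actions ($\GL(nm,q)$ acting on columns via right multiplication, $\GL(k,q^n)$ on rows via left multiplication by $B^\top$, which fixes the row space) is handled correctly, as is the use of nondegeneracy to guarantee that the $nm$ columns of $G$ form an $\fq$-basis of $U_{\cG}$, which is the step that makes $\Psi\circ\Phi=\mathrm{id}$ go through.
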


The correspondence in Theorem \ref{thm:correspondence_codes_systems} induces a correspondence between maximum $h$-scattered subspaces and MRD codes. We reformulate it in our setting, while the more general version can be found in \cite[Theorem 3.2]{zini2021scattered}; see also \cite[Theorem 4.9]{marino2022evasive}.

\begin{theorem}[\textnormal{\cite[Theorem 3.2]{zini2021scattered}}]\label{thm:hscattered_MRD}
 Suppose that $h+1$ divides $k$ and let $m:=\frac{k}{h+1}$. Let $U$ be an $nm$-dimensional $\fq$-subspace in $V(k,q^n)$ and let $\cC\in \Psi([U])$ be any of its associated $k$-dimensional $\Fn$-linear rank-metric codes in $\cL_{n,q}[\underline{X}]$. Then, $U$ is maximum $h$-scattered if and only if $\cC$ is an MRD code.
\end{theorem}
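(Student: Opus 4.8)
The plan is to express both properties through the $\fq$-dimensions of the intersections of $U$ with $\Fn$-subspaces of $V(k,q^n)$, and then to reconcile the two resulting conditions. First I would dispose of the word ``maximum''. Since $k=m(h+1)$, the bound \eqref{eq:scattered_bound} reads $\dim_{\fq}(U)\le \frac{kn}{h+1}=nm$, which is met with equality by hypothesis; hence, for an $nm$-dimensional subspace, being maximum $h$-scattered is the same as being $h$-scattered, and it suffices to prove that $U$ is $h$-scattered if and only if $\cC$ is MRD.

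The core of the argument is a dictionary between the ranks of the codewords of $\cC$ and the hyperplane sections of $U$. Fix the basis $\cG=(g_1,\dots,g_k)$ with $U=U_{\cG}$, and consider the $\fq$-linear map $u\colon (\Fn)^m\to V(k,q^n)$, $u(x)=(g_1(x),\dots,g_k(x))$, whose image is $U$. Because $\dim_{\fq}(U)=nm=\dim_{\fq}((\Fn)^m)$, the map $u$ is injective --- equivalently, $\cC$ is nondegenerate by Proposition \ref{prop:nondegenerate} --- and hence an $\fq$-isomorphism onto $U$. Writing a codeword as $f=\sum_i\lambda_i g_i$ with $\lambda=(\lambda_1,\dots,\lambda_k)\in\Fn^k$, one has $f=\varphi_\lambda\circ u$, where $\varphi_\lambda(v)=\sum_i\lambda_i v_i$ and, for $\lambda\neq 0$, $H_\lambda:=\ker\varphi_\lambda$ is an $\Fn$-hyperplane; every hyperplane arises in this way. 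Since $u$ is an isomorphism, $\dim_{\fq}\ker(f)=\dim_{\fq}(U\cap H_\lambda)$, and by rank--nullity
\begin{equation*}
\rk(f)=\dim_{\fq}(\mathrm{im}\,f)=nm-\dim_{\fq}(U\cap H_\lambda).
\end{equation*}
Minimizing over $f\neq 0$ yields $\dd_{\rk}(\cC)=nm-\max_{H}\dim_{\fq}(U\cap H)$, the maximum running over all $\Fn$-hyperplanes. Now $\cC$ is MRD exactly when $\dd_{\rk}(\cC)=n-h$, and since the Singleton-like bound \eqref{eq:sing_bound} always forces $\dd_{\rk}(\cC)\le n-h$, I obtain that $\cC$ is MRD if and only if every $\Fn$-hyperplane $H$ satisfies $\dim_{\fq}(U\cap H)\le n(m-1)+h$.

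It remains to prove that this hyperplane condition is equivalent to the $h$-scattered condition, namely $\dim_{\fq}(U\cap W)\le h$ for every $h$-dimensional $\Fn$-subspace $W$, and this is the step I expect to be the main obstacle: the two conditions constrain intersections with subspaces of very different $\Fn$-dimension ($k-1$ versus $h$), and they coincide trivially only when $m=1$, where hyperplanes of $V(2,q^n)$ \emph{are} the $h$-dimensional subspaces (recovering the classical scattered-polynomial case). For the easier direction I would argue by contraposition through quotients: given an $h$-dimensional $W$ with $c:=\dim_{\fq}(U\cap W)\ge h+1$, pass to $V(k,q^n)/W$, where the image $\bar U$ has $\fq$-dimension $nm-c$ and still spans; choosing an $\Fn$-hyperplane $\bar H$ there that maximizes $\dim_{\fq}(\bar U\cap \bar H)$ and pulling it back gives a hyperplane $H\supseteq W$ with $\dim_{\fq}(U\cap H)=c+\dim_{\fq}(\bar U\cap\bar H)$, which I would like to force above $n(m-1)+h$. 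The difficulty is that this needs a sufficiently sharp lower bound on $\max_{\bar H}\dim_{\fq}(\bar U\cap \bar H)$ --- equivalently, a sharp upper bound on the minimum distance of the quotient code --- and the crude hyperplane estimate loses lower-order terms; controlling these terms tightly, together with the subtler converse direction, is precisely the content of the cited \cite[Theorem 3.2]{zini2021scattered}, resting on the generalized scattered/evasive framework of \cite{csajbok2021generalising} and the code--subspace correspondence of Theorem \ref{thm:correspondence_codes_systems}. I would therefore either invoke that machinery directly, or, to keep the argument self-contained, establish the exact staircase of intersection dimensions of a maximum $h$-scattered subspace and match it termwise against the hyperplane condition above.
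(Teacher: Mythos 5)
Your reductions are all correct and match the standard dictionary: since $k=m(h+1)$, the bound \eqref{eq:scattered_bound} shows that an $nm$-dimensional $h$-scattered subspace is automatically maximum; the identity $\rk(f)=nm-\dim_{\fq}(U\cap H_\lambda)$ together with \eqref{eq:sing_bound} correctly translates the MRD property into the statement that $U$ meets every $\Fn$-hyperplane in $\fq$-dimension at most $n(m-1)+h$, i.e.\ that $U$ is $(k-1,\,n(m-1)+h)$-evasive. Note that the paper itself offers no proof of this theorem --- it is imported verbatim from \cite[Theorem 3.2]{zini2021scattered} --- so up to this point you are on equal footing with the text.

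The gap is the one you identify yourself, and it is genuine: the equivalence between $(k-1,\,n(m-1)+h)$-evasiveness and $h$-scatteredness for an $nm$-dimensional $U$ is essentially the entire content of the cited theorem, and your sketched quotient argument does not close even the direction you call easier. Concretely, from $\dim_{\fq}(U\cap W)=c\ge h+1$ you correctly get $\dim_{\fq}(U\cap H)=c+\dim_{\fq}(\bar U\cap\bar H)$ for hyperplanes $H\supseteq W$, but the only generic lower bound available is $\max_{\bar H}\dim_{\fq}(\bar U\cap\bar H)\ge\dim_{\fq}(\bar U)-n=nm-c-n$, so the $c$'s cancel and you land at $\dim_{\fq}(U\cap H)\ge n(m-1)$, which falls short of the required $n(m-1)+h+1$ by exactly $h+1$, independently of how large $c$ is. Closing this direction, and proving the converse (that $h$-scatteredness caps every hyperplane section at $nm-n+h$), requires the finer inductive analysis of intersection dimensions carried out in \cite{csajbok2021generalising} and \cite{zini2021scattered}, or an averaging argument over the hyperplanes through $W$; neither is routine. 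As it stands, your proposal is a correct reformulation plus an honest deferral to the same external source the paper cites, not a self-contained proof.
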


We conclude by remarking the fact that the setting of $\cL_{n,q}[\underline{X}]$ is a bit more restrictive for studying scattered subspaces and MRD codes, since we are fixing the dimension of the $\fq$-subspaces to be a multiple of $n$ -- or in other words, we are fixing the size of the matrices to be one multiple of the other. 
However, in this way, we will see that we can  take advantage of the multivariate polynomial representation, using tools described in  Section \ref{sec:curves} in order to derive new construction of maximum scattered subspaces -- and hence MRD codes.

\section{Indecomposable $h$-scattered sequences}\label{sec:scattered_sequences}

We start with this definition.
\begin{definition}\label{Def:ScatteredSequence}
Let $\mathcal I:=(i_1, i_2,\ldots, i_m) \in (\Z/n\Z)^m$ and consider $f_1,\ldots,f_s\in \cL_{n,q}[\underline{X}]$. We define the \textbf{$\mathcal I$-space}
$U_{\mathcal I,\mathcal{F}}:=U_{\mathcal{F}'},$
where
$$\mathcal F'=(X_1^{q^{i_1}},\ldots, X_m^{q^{i_m}}, f_1,\ldots, f_s).$$
The $s$-tuple $\mathcal{F}:=(f_1,\ldots,f_s)$ is said to be an \textbf{$(\mathcal I;h)_{q^n}$-scattered sequence of order $m$} if the $\mathcal I$-space
$U_{\mathcal I,\mathcal{F}}$
is maximum $h$-scattered in $V(m+s,q^{n})$, 
\end{definition}

Note that for $m=1$ and $h=1$ the above definition coincides with the one of scattered polynomials as in \cite{sheekey2016new}. In particular, $h$-scattered sequences with $h=1$ will be simply called scattered sequences.

\begin{definition}\label{Def:ExceptionalScatteredSequence}
An $(\mathcal I;h)_{q^n}$-scattered sequence  $\mathcal{F}:=(f_1,\ldots,f_s)$ of order $m$  is said to be \textbf{exceptional} if it is $h$-scattered over infinitely many extensions $\mathbb{F}_{q^{n\ell}}$ of $\mathbb{F}_{q^n}$.
\end{definition}

We consider the natural operation of direct sum on subspaces of $V(k_1,q^n)$ and $V(k_2,q^n)$ whose dimension is multiple of $n$. This can be identified with the operation on sequences of multivariate linearized polynomials obtained by juxtaposing the two corresponding sequences. Indeed, let $f_1,\ldots,f_{k_1} \in \cL_{n,q}[X_1,\ldots,X_m]$ and $g_1,\ldots,g_{k_2}\in \cL_{n,q}[X_1,\ldots,X_{m'}]$. Let $\mathcal F \oplus \mathcal G:=(f_1,\ldots, f_{k_1},g_1,\ldots,g_{k_2})\in \cL_{n,q}[X_1,\ldots,X_{m+m'}]$, then it is immediate to see that
$$ U_{\mathcal F} \oplus U_{\mathcal G} = U_{\mathcal{F}\oplus \mathcal{G}}.$$
When dealing with $nm$-dimensional $\fq$-subspaces of $V(k,q^n)$, they can all be represented by spaces of the form $U_{\mathcal F}$, for $\mathcal F=(f_1,\ldots,f_k)$. Thus, we can give the following definition.

\begin{definition}
 An $nm$-dimensional $\fq$-subspace $U$ of $V(k,q^n)$   is said to be \textbf{decomposable} if it can be written as
 $$U=U_{\mathcal F}\oplus U_{\mathcal G}$$
 for some nonempty $\mathcal F, \mathcal G$. 
 When this happens we say that $\mathcal{F}$ and $\mathcal{G}$ are \textbf{factors} of $\mathcal{H}$. Furthermore, 
 $U$ is then said to be \textbf{indecomposable} if it is not decomposable.
\end{definition}

Let us now consider the direct sum of $h$-scattered sequences.
Let $\mathcal I:=(i_1,\ldots,i_m)$, $\mathcal J:=(j_1,\ldots,j_{m^{\prime}})$, let $\mathcal{F}=(f_1,\ldots,f_{s})$ and $\mathcal{G}=(g_1,\ldots,g_{s^{\prime}})$ be $(\mathcal I;h)_{{q^n}}$ and $(\mathcal J;h)_{{q^n}}$-scattered sequences of orders $m$ and $m^{\prime}$, respectively.
The {direct sum} $\mathcal{H}:=\mathcal{F}\oplus \mathcal{G}$ is the $(s+s^{\prime})$-tuple $(f_1,\ldots,f_s,g_1,\ldots,g_{s^{\prime}})$. Since 
$$U_{\mathcal I\oplus\mathcal J,\mathcal{H}}=U_{\mathcal I,\mathcal{F}}\oplus U_{\mathcal J,\mathcal{G}},$$
$\mathcal{H}$ is an $(\mathcal I \oplus \mathcal J;h)_{q^{n}}$-scattered sequence of order $m+m^{\prime}$; see \cite{BGMP}. 

For any $m\geq 1$, there exist many $h$-scattered sequences of order $m$ obtained as direct sums of scattered polynomials and thus it is  natural to search for examples of $h$-scattered sequences which cannot be obtained as direct sums.

\begin{lemma}\label{Lemma:ind}
Let  $\mathcal{F}:=(f_1,\ldots,f_s)$ be an exceptional $(\mathcal I;h)_{q^n}$-scattered sequence of order $m$. If $U_{\mathcal I,\mathcal{F}}$ is $(r,rn/(h+1)-1)$-evasive for any $r\in [h+1,\lfloor (m+s)/2\rfloor ]$ with $(h+1)\mid rn$ then $\mathcal{F}$ is indecomposable.
\end{lemma}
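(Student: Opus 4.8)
The plan is to argue by contradiction: from a hypothetical decomposition of $\mathcal{F}$ I would extract a single $\Fn$-subspace whose intersection with $U:=U_{\mathcal I,\mathcal F}$ is too large, contradicting the assumed evasiveness. First I would record the numerology forced by maximum $h$-scatteredness. Since $\dim_{\fq}(U)=mn$ and $U$ is maximum $h$-scattered in $V(m+s,q^n)$, the bound \eqref{eq:scattered_bound} gives $mn=(m+s)n/(h+1)$, so $k:=m+s=m(h+1)$. Moreover $U$ spans $V(k,q^n)$ over $\Fn$ by our standing assumption, hence $\dim_{\fq}(U)\ge k$; combined with $\dim_{\fq}(U)=kn/(h+1)$ this yields $n\ge h+1$, an inequality I will use to control the smaller summand.

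Now suppose $\mathcal{F}$ is decomposable, say $U=U_{\mathcal F_1}\oplus U_{\mathcal F_2}$ with $U_{\mathcal F_1}\subseteq V(k_1,q^n)$ and $U_{\mathcal F_2}\subseteq V(k_2,q^n)$ coming from nonempty sequences $\mathcal F_1,\mathcal F_2$ in $m_1\ge 1$ and $m_2\ge 1$ variables, so that $V(k,q^n)=V(k_1,q^n)\oplus V(k_2,q^n)$, $m_1+m_2=m$ and $k_1+k_2=k$. Since $U$ is nondegenerate, restricting its evaluation to the variables of each factor is again injective, giving $\dim_{\fq}(U_{\mathcal F_1})=m_1 n$ and $\dim_{\fq}(U_{\mathcal F_2})=m_2 n$, and each summand spans its ambient space, e.g.\ $\langle U_{\mathcal F_1}\rangle_{\Fn}=V(k_1,q^n)$. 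The coordinate splitting gives $U\cap V(k_1,q^n)=U_{\mathcal F_1}$, because any element of $U$ lying in $V(k_1,q^n)$ must have vanishing $V(k_2,q^n)$-component.

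Next I would verify $k_1,k_2\ge h+1$ and that each summand is \emph{maximum} $h$-scattered. Intersecting an $h$-dimensional $\Fn$-subspace of $V(k_i,q^n)$ with $U$ shows each $U_{\mathcal F_i}$ is $h$-scattered, being a direct summand of an $h$-scattered space. To rule out $k_1\le h$: an $h$-scattered subspace spanning its ambient space is also $t$-scattered for every $t\le h$, so in particular $U$ is $k_1$-scattered and $\dim_{\fq}(U\cap V(k_1,q^n))\le k_1$; but this intersection equals $U_{\mathcal F_1}$, of dimension $m_1 n\ge n\ge h+1>k_1$, a contradiction. Hence $k_1,k_2\ge h+1$, so \eqref{eq:scattered_bound} applies to each summand and gives $m_i n\le k_i n/(h+1)$; summing these two inequalities against $mn=kn/(h+1)$ forces equality in both. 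In particular $\dim_{\fq}(U\cap V(k_1,q^n))=m_1 n=k_1 n/(h+1)$, and $(h+1)\mid k_1 n$.

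Finally I would set $r:=k_1$, after relabelling so that $k_1\le k_2$, whence $k_1\le k/2$ and thus $k_1\le \lfloor (m+s)/2\rfloor$. Then $r\in[h+1,\lfloor (m+s)/2\rfloor]$ and $(h+1)\mid rn$, so the hypothesis applies: $U$ is $(r,rn/(h+1)-1)$-evasive, forcing $\dim_{\fq}(U\cap V(k_1,q^n))\le k_1 n/(h+1)-1$. This contradicts the equality $\dim_{\fq}(U\cap V(k_1,q^n))=k_1 n/(h+1)$ obtained above, so no such decomposition exists and $\mathcal F$ is indecomposable. The only genuinely delicate point is the bookkeeping guaranteeing that the smaller summand's ambient dimension $k_1$ lands inside the prescribed window $[h+1,\lfloor (m+s)/2\rfloor]$ and meets the divisibility constraint; I note in passing that the exceptionality of $\mathcal F$ is not actually invoked, the argument using only that $U_{\mathcal I,\mathcal F}$ is maximum $h$-scattered together with the stated evasiveness.
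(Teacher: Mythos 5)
Your proof is correct and follows the same strategy as the paper's (much terser) argument: a factor of the decomposition would be a maximum $h$-scattered subspace of some $V(r,q^n)$ with $h+1\le r\le\lfloor (m+s)/2\rfloor$ and $(h+1)\mid rn$, meeting $U_{\mathcal I,\mathcal F}$ in exactly $rn/(h+1)$ dimensions and contradicting the evasiveness hypothesis. In fact you supply details the paper leaves implicit --- the dimension count forcing each summand to be maximum $h$-scattered, the exclusion of factors with ambient dimension at most $h$ via the descent of $h$-scatteredness to $t$-scatteredness, and the relabelling placing $r$ in the prescribed window --- and your observation that exceptionality is never used is also accurate.
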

\begin{proof}
Let $r\in [h+1,\lfloor (m+s)/2\rfloor ]$. A maximum $h$-scattered linear set in $V(r,q^n)$ has dimension $rn/(h+1)$. If $\mathcal{F}$ has a factor of   order $r$ then $\dim_{\fq}(U_{\mathcal I,\mathcal{F}}\cap V(r,q^n))=rn/(h+1)$, a contradiction to the   $(r,rn/(h+1)-1)$-evasiveness. 
\end{proof}

\subsection{Indecomposable $h$-scattered sequences and ordinary duality}

Let $\sigma: V\times V\longrightarrow \Fn$ be a nondegenerate
	bilinear form on $V=V(r,q^n)$ and define
	\[ \begin{array}{rccl}
		\sigma' \colon & V\times V &\longrightarrow & \F_q,\\
       &({u}, { v}) 	& \longmapsto &	\Tr_{q^n/q}(\sigma({u}, {v})).
	\end{array} \]
	Then $\sigma'$ is a nondegenerate bilinear form on
	$V$, when $V$ is  regarded as an $rn$-dimensional vector space  over
	$\fq$. Let $\tau$ and $\tau'$ be the orthogonal complement maps
	defined by $\sigma$ and $\sigma'$ on the lattices  of the
	$\F_{q^n}$-subspaces and $\fq$-subspaces of $V$, respectively.
	Recall that  if $R$ is an
	$\Fn$-subspace of $V$ and $U$ is an $\fq$-subspace of $V$
	then $U^{\tau'}$ is an $\fq$-subspace of $V$, $\dim_{\Fn}(R^\tau)+\dim_{\Fn}(R)=r$ and
	$\dim_{\fq} (U^{\tau'})+\dim_{\fq} (U)= rn$. It easy to see
	that $R^\tau=R^{\tau'}$ for each $\Fn$-subspace $R$ of $V$. For a more detailed explanation, we refer to \cite[Chapter 7]{taylor1992geometry}.
	
	With the notation above, $U^{\tau '}$ is called the \textbf{dual} of $U$ (with respect to $\tau'$). 
	Up to $\mathrm{GL}(k,q^n)$-equivalence, the dual of an $\fq$-subspace of $V$ does not depend on the choice of the nondegenerate bilinear forms $\sigma$ and $\sigma'$ on $V$. For more details see \cite{polverino2010linear}. If $R$ is an $s$-dimensional $\Fn$-subspace of $V$ and $U$ is a $t$-dimensional $\fq$-subspace of $V$, then
	\begin{equation}\label{pesi}
		\dim_{\fq}(U^{\tau'}\cap R^\tau)-\dim_{\fq}(U\cap R)=rn-t-sn.
	\end{equation}
	
	\begin{prop}
	The dual of an indecomposable scattered subspace is an indecomposable scattered subspace as well.
	\end{prop}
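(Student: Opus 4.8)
The plan is to establish two separate facts about the dual $U^{\tau'}$ of a scattered subspace $U\subseteq V:=V(r,q^n)$: that it is again scattered, and that it is indecomposable. Write $t:=\dim_{\fq}(U)$ and use freely that $\dim_{\fq}(U^{\tau'})=rn-t$ and that $\tau'$ is an involution on $\fq$-subspaces. First I would observe that the relevant $U$ is maximum scattered, i.e.\ $t=rn/2$: since $U^{\tau'}$ has complementary $\fq$-dimension $rn-t$, it can satisfy the scattered bound \eqref{eq:scattered_bound} only if $rn-t\le rn/2$, which together with $t\le rn/2$ forces $t=rn/2$. This is precisely the regime of subspaces arising from scattered sequences, so there is no loss.

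For scatteredness, the plan is to read the point-intersections of $U^{\tau'}$ off the hyperplane-intersections of $U$ via \eqref{pesi}. Given a point (a $1$-dimensional $\Fn$-subspace) $P$, write $P=R^{\tau}$ with $R=P^{\tau}$ an $\Fn$-hyperplane, so $\dim_{\Fn}(R)=r-1$. Plugging $s=r-1$ into \eqref{pesi} yields $\dim_{\fq}(U^{\tau'}\cap P)=\dim_{\fq}(U\cap R)+n-t$, and hence $U^{\tau'}$ is scattered if and only if every $\Fn$-hyperplane $R$ satisfies $\dim_{\fq}(U\cap R)\le t-n+1=\tfrac{rn}{2}-n+1$. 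Thus the whole matter reduces to this hyperplane-intersection bound for maximum scattered subspaces. I expect this to be the main obstacle: it does \emph{not} follow formally from the $(1,1)$-evasiveness of $U$ through \eqref{pesi} alone, since dualizing that property only controls the hyperplane-sections of $U^{\tau'}$, not its point-sections. I would obtain it by invoking the known duality of maximum scattered subspaces, or -- when $2\mid r$ -- by deriving it through Theorem \ref{thm:hscattered_MRD}: $U$ is maximum scattered exactly when an associated code $\cC$ is MRD, the geometric dual $U^{\tau'}$ corresponds to the dual code $\cC^{\perp}$, and the dual of an MRD code is again MRD; translating back gives that $U^{\tau'}$ is maximum scattered, which by the displayed equivalence is precisely the required bound.

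For indecomposability, the plan is to show that the dual of a direct sum is a direct sum and then invoke that $\tau'$ is an involution. Suppose, for contradiction, that $U^{\tau'}$ decomposes, say $V=V_1\oplus V_2$ as $\Fn$-subspaces with $U^{\tau'}=W_1\oplus W_2$ and $\{0\}\neq W_i\subseteq V_i$, $\langle W_i\rangle_{\Fn}=V_i$. Because the dual does not depend, up to $\GL(r,q^n)$-equivalence, on the chosen nondegenerate form, I may take $\sigma$ with $V_1\perp V_2$ and $\sigma|_{V_i}$ nondegenerate; a short computation with $\sigma'$ then shows that the duality respects the splitting, giving $U=(U^{\tau'})^{\tau'}=W_1^{\tau_1'}\oplus W_2^{\tau_2'}$, where $\tau_i'$ denotes the dual inside $V_i$. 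Each factor is nonzero, since $\dim_{\fq}(W_i^{\tau_i'})=\dim_{\fq}(V_i)-\dim_{\fq}(W_i)>0$.

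It remains to check that these factors are nondegenerate, so that the displayed splitting is a genuine decomposition of $U$. Here I would use that a scattered subspace contains no nonzero $\Fn$-subspace (for $n\ge 2$ a $1$-dimensional $\Fn$-subspace would meet it in $\fq$-dimension $n\ge 2>1$): applying this to $W_i\subseteq U^{\tau'}$, together with the characterization $\langle A^{\tau_i'}\rangle_{\Fn}=V_i \iff A$ contains no nonzero $\Fn$-subspace (itself a direct consequence of \eqref{pesi} read inside $V_i$), yields $\langle W_i^{\tau_i'}\rangle_{\Fn}=V_i$. Hence $U$ would be decomposable, contradicting its indecomposability; therefore $U^{\tau'}$ is indecomposable. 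Combining the two parts gives the proposition.
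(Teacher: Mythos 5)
Your argument is correct and, for the core indecomposability step, essentially coincides with the paper's: both proofs transfer a decomposition $V=V_1\oplus V_2$ across the duality via \eqref{pesi} (the paper computes $\dim_{\fq}(U^{\tau'}\cap V_i^{\tau})$ for a decomposable $U$ and concludes by contraposition; you run the same idea through the involution in the opposite direction, and you additionally verify that the dual factors span $V_i$ over $\Fn$, a point the paper leaves implicit). For the scatteredness of $U^{\tau'}$ the paper simply asserts that the dual of a maximum scattered subspace is maximum scattered; your reduction via \eqref{pesi} with $s=r-1$ to the hyperplane bound $\dim_{\fq}(U\cap R)\le rn/2-n+1$ identifies exactly the content of that known result (see \cite{blokhuis2000scattered,csajbok2021generalising}), so invoking it is the right move and matches what the paper implicitly relies on. One caveat: your proposed alternative derivation through Theorem \ref{thm:hscattered_MRD} conflates the two dualities used in the paper --- by Theorem \ref{thm:duality_delsarte_codes} the dual code $\cC^{\perp}$ corresponds to the \emph{Delsarte} dual of $U$, which lives in a different ambient space, not to the ordinary dual $U^{\tau'}$ --- so that route does not go through as stated, although the hyperplane bound can indeed be extracted from the MRD property of $\cC$ by a different (weight-distribution) argument; since your primary route is to cite the known duality result, this does not affect the validity of the proof.
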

	
	\begin{proof}
	Let $U$ be an indecomposable scattered subspace of $V=V(r,q^n)$. Then $rn$ is even, $\dim_{\F_q} U=rn/2$ and there exists $2\leq i\leq r/2$ such that $V=V_1\oplus V_2$, where $V_1=V(i,q^n)$, $V_2=V(r-i,q^n)$, $\dim_{\F_q}(U\cap V_1)=in/2$ and $\dim_{\F_q}(U\cap V_2)=(r-i)n/2$. Also, $V=V_1^\tau\oplus V_2^\tau$, $U^{\tau'}$ is a maximum scattered $\F_q$-subspace of $V$ and from Equation \eqref{pesi} we get
	\[\dim_{\F_q} (U^{\tau'}\cap V_1^{\tau})=\frac{in}{2}+rn-\frac{rn}{2}-in=\frac{(r-i)n}{2}\]
    and $\dim_{\F_q}(U^{\tau '}\cap V_2^\tau)=\frac{in}{2}$, i.e. $U^{\tau'}$ is indecomposable.
	\end{proof}

\subsection{Indecomposable $h$-scattered sequences and Delsarte duality}

In \cite[Section 3]{csajbok2021generalising}, another type of duality has been introduced. 	
	Let $U$ be an $m$-dimensional $\F_q$-subspace of a vector space $V=V(k,q^n)$, with $m>k$. By \cite[Theorems 1, 2]{lunardon2004translation} (see also \cite[Theorem 1]{lunardon2002geometric}), there is an embedding of $V$ in $Z=V(m,q^n)$ with $Z=V \oplus \Gamma$ for some $(m-k)$-dimensional $\F_{q^n}$-subspace $\Gamma$ such that
	$U=\langle W,\Gamma\rangle_{\F_{q}}\cap V$, where $W$ is an $m$-dimensional $\F_q$-subspace of $Z$, $\langle W\rangle_{\F_{q^n}}=Z$ and $\Gamma\cap V=W\cap \Gamma=\{{0}\}$.
	Then the quotient space $Z/\Gamma$ is isomorphic to $V$ and under this isomorphism $U$ is the image of the $\F_q$-subspace $W+\Gamma$ of $Z /\Gamma$.
	Now, let $\beta'\colon W\times W\rightarrow\F_{q}$ be a non-degenerate bilinear form on $W$. Then $\beta'$ can be extended to a non-degenerate bilinear form $\beta\colon Z\times Z\rightarrow\F_{q^n}$.
	Let $\perp$ and $\perp'$ be the orthogonal complement maps defined by $\beta$ and $\beta'$ on the lattice of $\F_{q^n}$-subspaces of $Z$ and of $\F_q$-subspaces of $W$, respectively.
	The $m$-dimensional $\F_q$-subspace $W+\Gamma^{\perp}$ of the quotient space $Z/\Gamma^{\perp}$  will be denoted by $\bar U$ and we call it the \textbf{Delsarte dual} of $U$ with respect to $\beta'$. By \cite[Remark 3.7]{csajbok2021generalising}, up to $\GL(m,q)$-equivalence,  the Delsarte dual of an $m$-dimensional $\F_q$-subspace does not depend on the choice of the nondegenerate bilinear form on $W$. 	

The following result relates the Delsarte dual of an $\fq$-subspace of $V(nm,q)$ with the dual of a rank-metric code in $\cL_{n,q}[\underline{X}]$.

\begin{theorem}\label{thm:duality_delsarte_codes}
Let $\cC\subseteq \cL_{n,q}[\underline{X}]$ be a nondegenerate $\Fn$-linear rank-metric code with $\textcolor{cyan}{\dd_{\mathrm{rk}}}(\cC)>1$, and let $U\in \Phi([\cC])$. 
Then $\Phi([\cC^\perp])=[U^\perp]$.
\end{theorem}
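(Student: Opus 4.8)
The plan is to represent both the dual code $\cC^{\perp}$ and the Delsarte dual $U^{\perp}$ through generator matrices over $\Fn$, and to reduce the claim to one linear-algebraic identity comparing the \emph{coefficient} representation of $\cL_{n,q}[\underline X]$ -- in which the form $\star$, and hence code duality, is the standard dot product -- with the \emph{evaluation} representation $\ev_{\cB}$, in which the associated subspace $U$ naturally lives. Throughout write $N:=nm$ and let $c(\cdot)$ denote the coefficient-vector map on $\cL_{n,q}[\underline X]$. Fix an $\Fn$-basis $\cG=(g_1,\dots,g_k)$ of $\cC$ and record its coefficient generator matrix $C\in\Fn^{k\times N}$, whose $l$-th row is $c(g_l)$, and its evaluation generator matrix $G\in\Fn^{k\times N}$, whose $l$-th row is $\ev_{\cB}(g_l)$. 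A direct computation from the definition of $\ev_{\cB}$ gives $G=CM^{\top}$, where $M=I_m\otimes B$ and $B$ is the invertible Moore matrix $B_{j,j'}=\beta_j^{q^{j'}}$; moreover $U=U_{\cG}$ is the $\fq$-span of the columns of $G$. I would first record that the hypotheses make all objects well defined: nondegeneracy of $\cC$ ensures $U\in\Phi([\cC])$ is defined and that $\dim_{\fq}U=N>k$, while $\dd_{\rk}(\cC)>1$ ensures, via Proposition \ref{prop:nondegenerate}(d), that $\cC^{\perp}$ is nondegenerate, so $\Phi([\cC^{\perp}])$ is defined as well.

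Then I would build an explicit model of the Delsarte dual construction recalled above. Take $Z=\Fn^{N}$ with the standard $\Fn$-bilinear dot product $\beta$, let $W=\fq^{N}$ with $\beta'=\beta|_{W}$, and set $\Gamma:=\ker(G)$, an $(N-k)$-dimensional $\Fn$-subspace. The map $z\mapsto Gz$ identifies $Z/\Gamma$ with $V(k,q^n)$ and carries $W=\fq^{N}$ exactly onto the $\fq$-span of the columns of $G$, that is, onto $U$; the transversality $W\cap\Gamma=\{0\}$ is precisely the $\fq$-independence of the columns of $G$, i.e.\ the nondegeneracy of $\cC$. This is thus a legitimate instance of the construction, and since the Delsarte dual is independent of all choices up to $\GL(N,q)$-equivalence, I may compute $U^{\perp}$ from it. Here $\Gamma^{\perp}=\ker(G)^{\perp}$ equals the row space of $G$, so $Z/\Gamma^{\perp}$ is realised by any full-rank $R\in\Fn^{(N-k)\times N}$ whose rows form a basis of $\ker(G)$, and $U^{\perp}$ is the image of $W$, namely the $\fq$-span of the columns of $R$.

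It remains to identify this with $\Phi([\cC^{\perp}])$. Let $C'\in\Fn^{(N-k)\times N}$ be a coefficient generator matrix of $\cC^{\perp}$. Since $\star$ is the standard dot product in coefficient coordinates, $\cC^{\perp}$ is the orthogonal complement of the coefficient vectors of $\cC$, so $\ker(C)=c(\cC^{\perp})$; combined with $G=CM^{\top}$ this yields $\ker(G)=M^{-\top}\ker(C)$, whence one may take $R=C'M^{-1}$ and $U^{\perp}=C'\,(M^{-1}\fq^{N})$. On the other hand, applying $G=CM^{\top}$ to $\cC^{\perp}$ shows its evaluation generator matrix is $C'M^{\top}$, so $\Phi([\cC^{\perp}])=C'\,(M^{\top}\fq^{N})$. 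The two subspaces coincide as soon as $M^{\top}\fq^{N}=M^{-1}\fq^{N}$, equivalently $MM^{\top}\fq^{N}=\fq^{N}$. This is the heart of the matter and the only genuine obstacle, namely reconciling the code-theoretic form $\star$ with the standard form used for the Delsarte dual: here the trace enters, since $MM^{\top}=I_m\otimes(BB^{\top})$ with $(BB^{\top})_{j,j'}=\Tr_{q^n/q}(\beta_j\beta_{j'})$, the Gram matrix of $\cB$ for the trace form. This matrix has entries in $\fq$ and is invertible because the trace form is nondegenerate, so $MM^{\top}$ maps $\fq^{N}$ bijectively onto itself and the required identity holds. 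Therefore $U^{\perp}=\Phi([\cC^{\perp}])$ as subspaces, and in particular $\Phi([\cC^{\perp}])=[U^{\perp}]$.
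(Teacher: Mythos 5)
Your proof is correct, but it takes a genuinely different route from the paper's. The paper disposes of this theorem by citing \cite[Theorem~4.12]{csajbok2021generalising} and observing that the MRD hypothesis used there can be relaxed to the two properties actually needed (no rank-one elements in $\cC$, and no common nonzero kernel vector), the latter being equivalent to nondegeneracy by Proposition~\ref{prop:nondegenerate}. You instead give a self-contained matrix computation: you realise both duals through generator matrices, identify the change of coordinates between the coefficient representation (where $\star$ is the standard dot product) and the evaluation representation (where $U$ lives) as $M=I_m\otimes B$ with $B$ the Moore matrix of $\cB$, and reduce the whole statement to the single identity $MM^{\top}\fq^{nm}=\fq^{nm}$, which holds because $MM^{\top}=I_m\otimes(BB^{\top})$ is the $\fq$-rational, invertible Gram matrix of $\cB$ for the trace form. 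This buys transparency: it makes explicit exactly how the trace reconciles the coefficient-space duality with the evaluation-space Delsarte duality, which the citation hides. Two small points should be tightened. First, $\dim_{\fq}U=nm>k$ is not a consequence of nondegeneracy (which only gives $\dim_{\fq}U=nm\leq kn$); it follows from $\dd_{\rk}(\cC)>1$ via the Singleton bound $k\leq m(n-d+1)\leq m(n-1)<nm$. Second, for the Delsarte dual to be an $nm$-dimensional subspace of $Z/\Gamma^{\perp}$ one needs $W\cap\Gamma^{\perp}=\{0\}$, which you do not check: in your model $\Gamma^{\perp}=\mathrm{rowsp}(G)=\ev_{\cB}(\cC)$, and $\ev_{\cB}(\cC)\cap\fq^{nm}=\{0\}$ precisely because a nonzero $f\in\cC$ with all values $f(\beta_je_i)$ in $\fq$ would have rank one; this is where $\dd_{\rk}(\cC)>1$ enters geometrically, and it is worth saying so.
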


\begin{proof}
 The proof can be easily obtained  extending the one in \cite[Theorem 4.12]{csajbok2021generalising}, where it must be noted that the hypothesis of the code $\cC$ being MRD is not needed. Indeed, in the proof of that result, the only properties used are the fact that $\cC$ has no elements of rank one and that the elements in $\cC$ have no common nonzero elements in their kernel; see also \cite[Remark 2.19]{marino2022evasive}. By Proposition \ref{prop:nondegenerate}, this last hypothesis is equivalent to $\cC$ being nondegenerate.
\end{proof}

\begin{prop}
 The Delsarte dual of an indecomposable subspace is an indecomposable subspace.
\end{prop}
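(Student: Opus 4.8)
The plan is to establish the contrapositive --- if the Delsarte dual $\bar U$ is decomposable then so is $U$ --- and the engine will be the observation that the Delsarte dual construction \emph{commutes with direct sums}, combined with the fact that it is an involution up to $\GL$-equivalence.

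First I would prove the direct-sum compatibility. Suppose $U=U_1\oplus U_2$ with $U_i$ an $\fq$-subspace of $V_i$, where $V=V_1\oplus V_2$, and let $\bar U_i$ denote the Delsarte dual of $U_i$. Since $\bar U$ does not depend on the auxiliary data $(Z,\Gamma,W,\beta')$ up to equivalence (by \cite[Remark 3.7]{csajbok2021generalising}), I may choose this data as the orthogonal direct sum of the data realizing $\bar U_1$ and $\bar U_2$: namely $Z=Z_1\oplus Z_2$, $\Gamma=\Gamma_1\oplus\Gamma_2$, $W=W_1\oplus W_2$, and $\beta'$ the orthogonal sum $\beta_1'\perp\beta_2'$. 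Using that intersections, sums, and orthogonal complements all respect the splitting $Z=Z_1\oplus Z_2$ --- in particular $(A_1\oplus A_2)\cap(B_1\oplus B_2)=(A_1\cap B_1)\oplus(A_2\cap B_2)$ and $\Gamma^{\perp}=\Gamma_1^{\perp}\oplus\Gamma_2^{\perp}$ --- one checks that all the defining conditions hold for the big data and that $\langle W,\Gamma\rangle_{\fq}\cap V=U$. Passing to the quotient $Z/\Gamma^{\perp}\cong(Z_1/\Gamma_1^{\perp})\oplus(Z_2/\Gamma_2^{\perp})$ then yields $\bar U=\bar U_1\oplus\bar U_2$.

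With this tool, suppose $\bar U=A\oplus B$ is a nontrivial decomposition. Applying the direct-sum compatibility to $\bar U$ and using that the Delsarte dual is an involution up to equivalence, $\overline{\bar U}=U$, gives $U=\bar A\oplus\bar B$. It remains to check that neither $\bar A$ nor $\bar B$ is trivial. If a factor, say $A$, satisfied $\dim_{\fq}A=\dim_{\Fn}\langle A\rangle_{\Fn}=:k_1$, then its associated code would be the full space of linearized polynomials $\cL_{n,q}[X_1,\ldots,X_{k_1/n}]$ and would therefore contain rank-one elements of the form $\Tr_{q^n/q}(v\underline X^\top)$; these would be rank-one codewords of $\cC^\perp$, where $\cC\in\Psi([U])$, contradicting $\dd_{\mathrm{rk}}(\cC^\perp)>1$. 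This last inequality holds by Proposition \ref{prop:nondegenerate}, since $\cC$ is nondegenerate under the standing assumption $\langle U\rangle_{\Fn}=V$. Hence $\bar A$ and $\bar B$ are genuine nonempty factors and $U$ is decomposable, as required.

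The step I expect to be the main obstacle is the direct-sum compatibility: it is conceptually transparent but requires carrying the entire Delsarte construction (the embedding $V\hookrightarrow Z$, the complement $\Gamma$, the subspace $W$, and the form $\beta'$) through the splitting and verifying that each defining relation is inherited, the crucial algebraic inputs being the compatibility of intersections with direct sums and the identity $\Gamma^{\perp}=\Gamma_1^{\perp}\oplus\Gamma_2^{\perp}$ for the induced orthogonal form. A secondary subtlety, handled above, is excluding degenerate factors, which is precisely where nondegeneracy (equivalently $\dd_{\mathrm{rk}}(\cC^\perp)>1$) enters. An equivalent and perhaps more economical route is to argue entirely on the code side via Theorem \ref{thm:duality_delsarte_codes}: decomposability of $U$ corresponds to its code being a direct sum of two subcodes supported on complementary, mutually $\star$-orthogonal sets of coordinates, and for such block-diagonal codes one has $(\cC'\oplus\cC'')^{\perp}=\cC'^{\perp}\oplus\cC''^{\perp}$, so that code duality --- and hence, by Theorem \ref{thm:duality_delsarte_codes}, the Delsarte dual --- preserves decomposability.
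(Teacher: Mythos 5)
Your argument is correct, but your primary route is genuinely different from the paper's. The paper works entirely on the code side: it takes $\cC\in\Psi([U])$, invokes Theorem \ref{thm:duality_delsarte_codes} to identify the Delsarte dual of $U$ with the $\star$-dual of $\cC$, uses that a decomposition $U^\perp=U_1\oplus U_2$ forces $[\cC^\perp]=[\cC_1\oplus\cC_2]$ and hence $[\cC]=[\cC_1^\perp\oplus\cC_2^\perp]$, and translates back to get $[U]=[U_1^\perp\oplus U_2^\perp]$, a contradiction. This is exactly the ``more economical route'' you sketch in your final sentences, and it is shorter precisely because the identity $(\cC_1\oplus\cC_2)^\perp=\cC_1^\perp\oplus\cC_2^\perp$ is immediate from the coordinatewise definition of $\star$, so no auxiliary data need to be tracked. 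Your primary route instead proves directly that the Delsarte dual construction commutes with direct sums by choosing the data $(Z,\Gamma,W,\beta')$ as an orthogonal direct sum of the data for the two factors and checking that every defining condition (the spanning condition on $W$, the trivial intersections, $\langle W,\Gamma\rangle_{\fq}\cap V=U$, and $\Gamma^{\perp}=\Gamma_1^{\perp}\oplus\Gamma_2^{\perp}$) passes to the splitting; this does work, and it buys a self-contained statement on the subspace side (``Delsarte duality commutes with direct sums'') that does not rely on Theorem \ref{thm:duality_delsarte_codes}, at the cost of the bookkeeping you yourself identify as the main obstacle. You are also more careful than the paper on one point: the paper silently assumes the dual factors are genuine nonempty factors, whereas you explicitly rule out degenerate ones via nondegeneracy of $\cC$ and Proposition \ref{prop:nondegenerate}. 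One minor caveat for your geometric route: the Delsarte dual of a factor $U_i\subseteq V(k_i,q^n)$ is only defined when $\dim_{\fq}(U_i)>k_i$, so a word confirming this holds for each factor in the settings considered would tighten the argument.
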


\begin{proof}
Let $U$ be an indecomposable subspace and suppose on the contrary that $U^\perp=U_1\oplus U_2$ and let $\cC \in \Psi([U])$. By Theorem \ref{thm:duality_delsarte_codes}, we have
$[\cC^\perp]= \Psi([U^\perp])=\Psi([U_1\oplus U_2])=[\cC_1\oplus \cC_2]$, where $\cC_i \in \Psi([U_i])$ for $i \in \{1,2\}$.
Thus, $\cC^\perp$ is equivalent to $\cC_1\oplus\cC_2$, which implies 
$[\cC]=[\cC_1^\perp\oplus\cC_2^\perp]$. In particular, this means
$$[U]=[U_1^\perp \oplus U_2^\perp],$$
which contradicts the hypothesis of $U$ being indecomposable.
\end{proof}

\section{The first infinite family of indecomposable exceptional scattered sequences of order larger than 1}\label{sec:construction}

A first example of indecomposable $((0,0),1)_{q^4}$-scattered sequence of order larger than one for $q=2^{2s+1}$ was provided in \cite{BMN2022} and it consists of the pair $(x^q+y^{q^2},x^{q^2}+y^q+y^{q^2})$. 
In this paper we provide a generalization of this example to an infinite family of exceptional type.

\begin{definition}
\label{defn:sets}
Let $n$ be a positive integer and consider the finite field $\mathbb{F}_{q^n}$. For each choice of $\alpha,\beta,\gamma \in \mathbb{F}_{q^n}^*$, and $I\neq J\in \mathbb{N}$, $I,J<n-1$,  we define the set
\begin{equation*}
        U_{\alpha, \beta, \gamma}^{I,J,n}:=\left\{\left(x,y,x^{q^I}+\alpha y^{q^J},x^{q^J}+\beta y^{q^I} + \gamma y^{q^J}\right) \ : \ x,y\in \F_{q^n}\right\}.
\end{equation*}
\end{definition}

We can immediately give the following result which gives a sufficient condition on $U_{\alpha, \beta, \gamma}^{I,J,n}$ for being exceptional scattered.

\begin{theorem}
\label{thm:scattered:1}
Assume that $\mathrm{gcd}(I,J,n)=1$ and that the polynomial 
\begin{equation}\label{Eq:P}
    P^{I,J}_{\alpha,\beta,\gamma}(X):=\begin{cases}
    X^{q^{J-I}+1} + \gamma X - \alpha\beta,& \mathrm{if } I<J,\\ 
     X^{q^{I-J}+1} + \gamma X^{q^{I-J}} - \alpha\beta,& \mathrm{if } I>J,\\
    \end{cases}
\end{equation}
has no roots in $\F_{q^n}$. Then the set $U_{\alpha, \beta, \gamma}^{I,J,n}$ is exceptional scattered.
\end{theorem}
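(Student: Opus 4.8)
The plan is to unravel the definition of scatteredness into an elementary condition on $\lambda$ and then reduce it, by an explicit elimination, to the rootlessness of $P^{I,J}_{\alpha,\beta,\gamma}$. Write $N=n\ell$ and observe first that, since the first two coordinates of a vector of $U^{I,J,N}_{\alpha,\beta,\gamma}$ recover $(x,y)$, the space has $\fq$-dimension $2N$; as $\dim_{\mathbb{F}_{q^N}}V(4,q^N)=4$, any scattered such $U$ is automatically maximum scattered, so that $(f_1,f_2)$ with $f_1=X^{q^I}+\alpha Y^{q^J}$ and $f_2=X^{q^J}+\beta Y^{q^I}+\gamma Y^{q^J}$ is a $((0,0);1)_{q^N}$-scattered sequence of order $2$ precisely when $U^{I,J,N}_{\alpha,\beta,\gamma}$ is scattered. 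Now $U$ fails to be scattered over $\mathbb{F}_{q^N}$ exactly when there exist $\lambda\in\mathbb{F}_{q^N}\setminus\fq$ and $(x,y)\neq(0,0)$ with $\lambda u\in U$ for the point $u$ of parameters $(x,y)$; comparing the last two coordinates of $\lambda u$ with those of the point of parameters $(\lambda x,\lambda y)$ yields, with $a:=\lambda^{q^I}-\lambda$ and $b:=\lambda^{q^J}-\lambda$, the system
\begin{align}
a\,x^{q^I}+\alpha b\, y^{q^J} &= 0, \label{eq:plan1}\\
b\,x^{q^J}+\beta a\, y^{q^I}+\gamma b\, y^{q^J} &= 0. \label{eq:plan2}
\end{align}
I would call such a triple a \emph{bad solution}; the goal becomes to exclude bad solutions over infinitely many $\mathbb{F}_{q^N}$.

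Second, I would run a short case analysis. Since $a=b=0$ forces $\lambda\in\mathbb{F}_{q^{\gcd(I,J,N)}}$, the assumption $\gcd(I,J,N)=1$ gives $a=b=0\Leftrightarrow\lambda\in\fq$; and if exactly one of $a,b$ vanishes then \eqref{eq:plan1}--\eqref{eq:plan2} force $x=y=0$. Hence a bad solution has $a,b\neq 0$ and, using \eqref{eq:plan1} again, $x,y\neq 0$. Assuming $I<J$ and setting $k:=J-I$, from \eqref{eq:plan1} one gets $x^{q^I}=-\tfrac{\alpha b}{a}y^{q^J}$, whence $x^{q^J}=(x^{q^I})^{q^k}$; substituting into \eqref{eq:plan2} and dividing by $y^{q^I}$ produces, in the single variable $v:=y^{q^J-q^I}$, an equation of degree $q^k+1$. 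The key computation is that the substitution $v=-\tfrac{a}{\alpha b}X$ clears every occurrence of $a,b$ and turns this equation into exactly $P^{I,J}_{\alpha,\beta,\gamma}(X)=X^{q^{k}+1}+\gamma X-\alpha\beta=0$. Thus a bad solution over $\mathbb{F}_{q^N}$ produces the root $X=-\tfrac{\alpha b}{a}\,y^{q^J-q^I}\in\mathbb{F}_{q^N}$ of $P^{I,J}_{\alpha,\beta,\gamma}$; contrapositively, if $P^{I,J}_{\alpha,\beta,\gamma}$ has no root in $\mathbb{F}_{q^N}$ and $\gcd(I,J,N)=1$, then $U^{I,J,N}_{\alpha,\beta,\gamma}$ is scattered. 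The case $I>J$ is parallel: the same elimination (now eliminating $y$) yields a degree $q^{I-J}+1$ equation of the shape $X^{q^{I-J}+1}+c_1X+c_0=0$, which the reciprocal substitution $X\mapsto 1/X$ identifies with $P^{I,J}_{\alpha,\beta,\gamma}(X)=X^{q^{I-J}+1}+\gamma X^{q^{I-J}}-\alpha\beta=0$; this is precisely why $P$ is defined with the twisted middle term when $I>J$.

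Finally, exceptionality follows by a purely arithmetic argument on the extensions. Since $P^{I,J}_{\alpha,\beta,\gamma}\in\Fn[X]$ has, by hypothesis, no root in $\Fn$, each of its irreducible factors over $\Fn$ has degree $\geq 2$; let $D$ be the finite set of these degrees. For a prime $\ell$ with $\ell>\max D$ and $\ell\nmid\gcd(I,J)$ one has, on one hand, that no element of $D$ divides $\ell$, so $P^{I,J}_{\alpha,\beta,\gamma}$ has no root in $\mathbb{F}_{q^{n\ell}}=\mathbb{F}_{(q^n)^\ell}$, and on the other hand that $\gcd(I,J,n\ell)=1$ (using $\gcd(I,J,n)=1$). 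By the previous step $U^{I,J,n\ell}_{\alpha,\beta,\gamma}$ is then scattered, and there are infinitely many such primes $\ell$, so $(f_1,f_2)$ is exceptional scattered. The main obstacle is the elimination of the second variable leading to $P$: one has to verify that the explicit linear (respectively reciprocal) change of variable removes all dependence on $\lambda$ through $a$ and $b$ and reproduces $P^{I,J}_{\alpha,\beta,\gamma}$ exactly. Everything else is bookkeeping together with an elementary density argument on primes; in particular, for this sufficiency direction no appeal to the Hasse--Weil bound is needed, the curve machinery being reserved for the converse and for the weight estimates.
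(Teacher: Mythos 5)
Your proposal is correct and follows essentially the same route as the paper: both reduce scatteredness to the system $a\,x^{q^I}+\alpha b\,y^{q^J}=0$, $b\,x^{q^J}+\beta a\,y^{q^I}+\gamma b\,y^{q^J}=0$ (with $a=\lambda^{q^I}-\lambda$, $b=\lambda^{q^J}-\lambda$) and show that a nontrivial solution with $\lambda\notin\mathbb{F}_q$ forces a root of $P^{I,J}_{\alpha,\beta,\gamma}$ in $\mathbb{F}_{q^n}$ --- the paper via two symmetric combinations factoring out $\lambda^{q^I}-\lambda$ and $\lambda^{q^J}-\lambda$, you via direct elimination of one variable and a change of variable, which is the same computation. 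Your prime-density argument for exceptionality is likewise interchangeable with the paper's splitting-field argument.
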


\begin{proof}
Assume that $P^{I,J}_{\alpha,\beta,\gamma}(X)$ has no roots in $\F_{q^n}$ and let $\lambda \in \mathbb{F}_{q^n}\setminus \F_q$ be such that 
\begin{equation}
\label{eq:0}
    \left(x,y,x^{q^I}+\alpha y^{q^J},x^{q^J}+\beta y^{q^I} + \gamma y^{q^J}\right) = \lambda \left(u,v,u^{q^I}+\alpha v^{q^J},u^{q^J}+\beta v^{q^I} + \gamma v^{q^J}\right),
\end{equation}
with $x,y,u,v \in \mathbb{F}_{q^n}$.
The set $U_{\alpha, \beta, \gamma}^{I,J,n}$ is maximum scattered if and only if the previous equation holds only for $u=v=0$. 

By way of contradiction, we assume that $(u,v)\neq (0,0)$.  We have 
\begin{equation}
\label{eq:system:scattered}
    \begin{cases}
    x=\lambda u\\
    y = \lambda v\\
    \lambda^{q^I}u^{q^I} + \alpha\lambda^{q^J}v^{q^J} = \lambda\left(u^{q^I}+\alpha v^{q^J}\right)\\
    \lambda^{q^J}u^{q^J} + \beta\lambda^{q^I}v^{q^I} + \gamma\lambda^{q^J}v^{q^J} = \lambda\left(u^{q^J}+\beta v^{q^I} + \gamma v^{q^J}\right).
    \end{cases}
\end{equation}
The last two equations in \eqref{eq:system:scattered} can be rewritten as 
\begin{equation}
\label{eq:1}
  \lambda^{q^J}\alpha v^{q^J} + \lambda^{q^I}u^{q^I} - \lambda\left(u^{q^I}+\alpha v^{q^J}\right) = 0 
\end{equation}
and
\begin{equation}
\label{eq:2}
 \lambda^{q^J}\left(u^{q^J} + \gamma v^{q^J}\right) + \lambda^{q^I}\beta v^{q^I} - \lambda\left(u^{q^J}+\beta v^{q^I} + \gamma v^{q^J}\right) = 0.
\end{equation}
Multiplying \eqref{eq:1} by $\left(u^{q^J} + \gamma v^{q^J}\right)$ and \eqref{eq:2} by $\alpha v^{q^J}$, and taking the difference of the obtained equations, we have 
\begin{equation}
\label{eq:3}
    \left(\lambda^{q^I} - \lambda\right)\left(u^{q^I + q^J} + \gamma u^{q^I}v^{q^J} - \alpha\beta v^{q^I + q^J}\right) = 0.
\end{equation}
If $v=0$ then $u\neq 0$ and $\lambda^{q^I} - \lambda=0$, i.e. $\lambda \in \mathbb{F}_{q^I}$. If $v\neq 0$, letting $X:=\frac{u^{q^I}}{v^{q^I}}$ (if $I < J$) or $X:=\frac{u^{q^J}}{v^{q^J}}$ (if $I > J$), we can rewrite $ (u^{q^I + q^J} + \gamma u^{q^I}v^{q^J} - \alpha\beta v^{q^I + q^J} )/v^{q^I+q^J}$ as $P^{I,J}_{\alpha,\beta,\gamma}(X)$.
By assumption, the  polynomial $P^{I,J}_{\alpha,\beta,\gamma}(X)$ has no roots in $\F_{q^n}$, hence \eqref{eq:3} is satisfied if and only if $\lambda\in \F_{q^I}$.

We consider now the difference between \eqref{eq:1} multiplied by $\beta v^{q^I}$ and \eqref{eq:2} multiplied by $u^{q^I}$ and we have
\begin{equation}
\label{eq:4}
    \left(\lambda^{q^J} - \lambda\right)\left(\alpha\beta v^{q^I + q^J} -\gamma u^{q^I}v^{q^J} - u^{q^I + q^J}\right) = 0.
\end{equation}
Then, arguing as above we see that Equation \eqref{eq:4} is satisfied if and only if $\lambda\in \F_{q^J}$.

We have therefore obtained that the values of $\lambda$ satisfying \eqref{eq:0}
need to be $\lambda\in \F_{q^I}\cap \F_{q^J}\cap \F_{q^n}$. As, by assumption, $\mathrm{gcd}(I,J,n)=1$, we hence have that $\lambda\in \F_q$, a contradiction. So $(u,v)=(0,0)$, which yields that the set $U_{\alpha, \beta, \gamma}^{I,J,n}$ is scattered.

The fact that $U_{\alpha, \beta, \gamma}^{I,J,n}$ is exceptional scattered follows directly from the discussion above. Indeed, let $\F_{q^{n\ell}}$ be the extension field of $\F_{q^n}$ that is the splitting field of the polynomial $P(X)$. Then, there exist infinitely many integers $t$ satisfying the following two conditions:
\begin{itemize}
    \item $\mathrm{gcd}(I,J,nt)=1$,
    \item the polynomial $P^{I,J}_{\alpha,\beta,\gamma}(X)$ has no roots in $\F_{q^{nt}}$.
\end{itemize}
 This can be seen as all the $t\in \mathbb{N}$ such that $\mathrm{gcd}(I,J,t)=1$ and $\mathrm{gcd}(\ell,t)=1$ are suitable. Hence, the set $U_{\alpha, \beta, \gamma}^{I,J,nt} = \left\{\left(x,y,x^{q^I}+\alpha y^{q^J},x^{q^J}+\beta y^{q^I} + \gamma y^{q^J}\right) \ : \ x,y\in \F_{q^{nt}}\right\}$ is scattered for infinitely many $t$, meaning that $U_{\alpha, \beta, \gamma}^{I,J,n}$ is exceptional scattered. 
\end{proof}
Note that for $\alpha=\beta=\gamma=1$, $I=1$, $J=2$,
one obtains the indecomposable maximum scattered linear set in \cite{BMN2022}.

\begin{remark}\label{rem:no_roots}
Apart from the maximum scattered subspaces found in \cite{BMN2022}, we want to point out that in this paper we provide many more constructions, and the family that we propose is nonempty for infinitely many $n$ and $q$. To see this, we just need to prove that we can always choose $\alpha, \beta,\gamma$ such that the polynomial $P^{I,J}_{\alpha,\beta,\gamma}(X)$ has no roots in $\Fn$. If we restrict to the case that $K=J-I>0$ and $n$ are coprime, then the polynomial $P^{I,J}_{\alpha,\beta,\gamma}(X)$ is a projective polynomial associated to the automorphism $\sigma:x\longmapsto x^{q^K}$. The  linearized polynomial associated with $P^{I,J}_{\alpha,\beta,\gamma}(X)$ is $$f(X):=X^{q^{2K}} + \gamma X^{q^K} - \alpha\beta X.$$
By \cite[Theorem 6]{mcguire2019characterization}, $P^{I,J}_{\alpha,\beta,\gamma}(X)$ has no roots in $\Fn$ if and only if the matrix $A_f$ has no eigenvalues in $\fq$, where
$$A_f:=C_f C_f^\sigma\cdot\ldots\cdot C_f^{\sigma^{n-1}},$$
and 
$$C_f=\begin{pmatrix}
0 & \alpha\beta  \\
1& -\gamma
\end{pmatrix}$$
is the companion matrix associated with $f$. 

We can choose $\alpha\beta,\gamma \in \fq^*$ such that the corresponding degree $2$ polynomial $\tilde{f}:=X^2+\gamma X-\alpha\beta$ associated with $f$ is a primitive polynomial, that is, it is irreducible and its roots $\eta_1,\eta_2$ are generators of $\F_{q^2}^*$. Thus, since the coefficients are in $\fq$,  $A_f=(C_f)^n$, and its eigenvalues are $\eta_1^n,\eta_2^n$. If $n \not\equiv 0 \pmod{q+1}$, then $\eta_1^n,\eta_2^n\notin \fq^*$ and the polynomial $P^{I,J}_{\alpha,\beta,\gamma}(X)$ has no roots in $\Fn$. 

Since there are $\varphi(q^2-1)/2$, where $\varphi$ is the Euler's totient function, primitive polynomials $\tilde{f}$ of degree $2$ and $\alpha,\beta\in \mathbb{F}_{q^n}^*$, this shows that there are at least $(q^n-1)\varphi(q^2-1)/2$ choices for $P^{I,J}_{\alpha,\beta,\gamma}(X)$ with $\gcd(K,n)=1$ and $(q+1)\nmid n$.
\end{remark}

Using algebraic curves  over finite fields we can actually prove the converse of Theorem \ref{thm:scattered:1} in a small-degree regime for $I$ and $J$. 

\begin{theorem}
\label{thm:scattered:2}
Assume that $\mathrm{gcd}(I,J,n)=1$ and $\max\{I,J\}\leq n/4$.
If the set $U_{\alpha, \beta, \gamma}^{I,J,n}$ is scattered, then 
the polynomial 
\begin{equation}
    P^{I,J}_{\alpha,\beta,\gamma}(X):=\begin{cases}
    X^{q^{J-I}+1} + \gamma X - \alpha\beta,& \mathrm{if } I<J,\\ 
     X^{q^{I-J}+1} + \gamma X^{q^{I-J}} - \alpha\beta,& \mathrm{if } I>J,\\
    \end{cases}
\end{equation}
has no roots in $\F_{q^n}$.
\end{theorem}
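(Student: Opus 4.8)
The plan is to prove the contrapositive: assuming $P^{I,J}_{\alpha,\beta,\gamma}(X)$ has a root $x_0\in\F_{q^n}$, I will produce $\lambda\in\F_{q^n}\setminus\fq$ and $(u,v)\neq(0,0)$ satisfying \eqref{eq:0}, so that $U_{\alpha,\beta,\gamma}^{I,J,n}$ is not scattered. Note first that $x_0\neq 0$, since $P^{I,J}_{\alpha,\beta,\gamma}(0)=-\alpha\beta\neq0$. I assume $I<J$ throughout; the case $I>J$ is entirely symmetric. The starting point is the same reduction used in the proof of Theorem~\ref{thm:scattered:1}: after eliminating $x,y$, Equations \eqref{eq:1} and \eqref{eq:2} can be read as the homogeneous linear system
\begin{equation*}
\begin{cases}
u^{q^I}\,a+\alpha v^{q^J}\,b=0,\\
\beta v^{q^I}\,a+(u^{q^J}+\gamma v^{q^J})\,b=0,
\end{cases}
\end{equation*}
in the unknowns $a:=\lambda^{q^I}-\lambda$ and $b:=\lambda^{q^J}-\lambda$. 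Since $\gcd(I,J,n)=1$ gives $\F_{q^{\gcd(I,n)}}\cap\F_{q^{\gcd(J,n)}}=\fq$, one has $(a,b)=(0,0)$ iff $\lambda\in\fq$; thus $\lambda\notin\fq$ is exactly the requirement $(a,b)\neq(0,0)$, and a nonzero $(a,b)$ can exist only if the determinant vanishes, i.e.
\begin{equation*}
u^{q^I+q^J}+\gamma u^{q^I}v^{q^J}-\alpha\beta v^{q^I+q^J}=0,
\end{equation*}
which for $v\neq0$ is precisely $P^{I,J}_{\alpha,\beta,\gamma}(u^{q^I}/v^{q^I})=0$.

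Next I would exploit the hypothesised root. Choosing $v\in\F_{q^n}^*$ and $u$ with $u^{q^I}=x_0\,v^{q^I}$ (possible, and forcing $u\neq0$, since Frobenius is bijective and $x_0\neq0$) makes the determinant vanish; the two rows are then proportional, and as $\alpha v^{q^J}\neq0$ the first row is nonzero, so \eqref{eq:2} follows automatically once \eqref{eq:1} holds. Writing $w:=v^{q^I}$ and substituting $u^{q^I}=x_0w$, $v^{q^J}=w^{q^{J-I}}$ into \eqref{eq:1}, the problem collapses to finding $\lambda\in\F_{q^n}\setminus\fq$ and $w\in\F_{q^n}^*$ with
\begin{equation*}
x_0\,w\,(\lambda^{q^I}-\lambda)+\alpha\,w^{q^{J-I}}(\lambda^{q^J}-\lambda)=0,
\end{equation*}
equivalently $w^{\,q^{J-I}-1}=\varphi(\lambda)$, where
\begin{equation*}
\varphi(\lambda):=\frac{-x_0(\lambda^{q^I}-\lambda)}{\alpha(\lambda^{q^J}-\lambda)}.
\end{equation*}
Any such pair lifts back (via $v=w^{q^{-I}}$, $u=(x_0w)^{q^{-I}}$) to a genuine witness $(u,v,\lambda)$ of non-scatteredness.

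The heart of the argument is to guarantee such a pair via algebraic curves. I would introduce the Kummer cover $\mathcal{X}'\colon Z^{d}=\varphi(\lambda)$ of the projective $\lambda$-line, with $d:=q^{J-I}-1$; since $\gcd(d,p)=1$, Theorem~\ref{thm:kummer} applies. Absolute irreducibility holds because $\varphi$ has a place of valuation $\pm1$ coprime to $d$ (a simple zero coming from $\F_{q^I}\setminus\F_{q^{\gcd(I,J)}}$, or, in the case $I\mid J$, a simple pole from $\F_{q^J}\setminus\F_{q^I}$). Using the Kummer genus formula \cite[Corollary 3.7.4]{Stichtenoth} with the ramification data of $\varphi$ — simple zeros over $\F_{q^I}\setminus\F_{q^{\gcd(I,J)}}$, simple poles over $\F_{q^J}\setminus\F_{q^{\gcd(I,J)}}$, and no ramification over $\lambda=\infty$ since $v_\infty(\varphi)=q^J-q^I=q^I d$ is a multiple of $d$ — one obtains
\begin{equation*}
2g-2=-2d+\bigl(q^I+q^J-2q^{\gcd(I,J)}\bigr)(d-1),
\end{equation*}
whence the sharp estimate $2g<q^{2J-I}$.

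Finally I would apply the Hasse-Weil bound (Theorem~\ref{thm:hasse-weil}): $\mathcal{X}'$ has at least $q^n+1-2g\sqrt{q^n}$ rational places over $\F_{q^n}$. From these I discard the $O(q^{\max\{I,J\}})$ places that do not give a valid witness, namely those over $\lambda\in\fq$ (where $\varphi$ is constant equal to $-x_0/\alpha$), over $\lambda=\infty$, or with $w=0$ (the zeros of $\varphi$). The remaining places all have $\lambda\notin\fq$ and $w\neq0$ and hence produce the desired witness, showing $U_{\alpha,\beta,\gamma}^{I,J,n}$ is not scattered. The main obstacle is making this count effective: one needs $2g\sqrt{q^n}$ plus the discarded places to stay strictly below $q^n+1$. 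Here is exactly where $\max\{I,J\}\le n/4$ enters: it yields $2J-I\le n/2-1$ (and symmetrically $2I-J\le n/2-1$ when $I>J$), so that $2g\sqrt{q^n}<q^{\,2J-I+n/2}\le q^{\,n-1}$, which together with the negligible $O(q^{\max\{I,J\}})$ discarded places keeps $q^n+1-2g\sqrt{q^n}$ positive and dominant for every admissible $q$ and $n$.
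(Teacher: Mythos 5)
Your proposal is correct and follows essentially the same route as the paper: the contrapositive, the reduction of Equations \eqref{eq:1}--\eqref{eq:2} to the vanishing of the determinant $u^{q^I+q^J}+\gamma u^{q^I}v^{q^J}-\alpha\beta v^{q^I+q^J}$, the use of the root of $P^{I,J}_{\alpha,\beta,\gamma}$ to fix the ratio $u^{q^I}/v^{q^I}$, the resulting Kummer cover of the $\lambda$-line with genus of order $\tfrac12 q^{2\max\{I,J\}-\min\{I,J\}}$, and the Hasse--Weil count made positive by $\max\{I,J\}\le n/4$. Your version is, if anything, slightly more explicit than the paper's on two points (why the second equation follows from row proportionality, and why a place of valuation $\pm 1$ coprime to $q^{K}-1$ always exists, including when $I\mid J$), so no changes are needed.
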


\begin{proof}
Assume that $P^{I,J}_{\alpha,\beta,\gamma}(X)$ has a root $\mu \in \F_{q^n}$. Then, with the notations as in the proof of Theorem \ref{thm:scattered:1}, $\frac{u^{q^I}}{v^{q^I}}=\mu$ (if $I < J$) or $\frac{u^{q^J}}{v^{q^J}}=\mu$ (if $I > J$), and we let $\tilde{\mu}:=\frac{1}{\mu^{q^{-I}}}=\frac{v}{u}$ (if $I < J$) or $\tilde{\mu}:=\frac{1}{\mu^{q^{-J}}}=\frac{v}{u}$ (if $I > J$). In this way, we can rewrite the third equation in \eqref{eq:system:scattered} as
\begin{equation*}
    u^{q^I}\left(\lambda^{q^I}-\lambda\right) + \alpha\tilde{\mu}^{q^J}u^{q^J}\left(\lambda^{q^J}-\lambda\right) = 0.
\end{equation*}
Note that this equation defines the reducible curve 
\begin{equation*}
    \mathcal{X}: u^{q^I}\left(\lambda^{q^I}-\lambda\right) + \alpha\tilde{\mu}^{q^J}u^{q^J}\left(\lambda^{q^J}-\lambda\right) = 0
\end{equation*}
in $\mathbb{A}^2(\F_{q^n})$, with coordinates $(u,\lambda)$. Then, there are two possible cases:

\begin{enumerate}
    \item if $I < J$, let $K:=J - I$. The curve
\begin{equation}
\label{eq:curve:1}
    \mathcal{Y}: -\alpha \tilde{\mu}^{q^J}u^{q^J-q^I} = \frac{\prod_{\vartheta\in \F_{q^I}\setminus \F_{q^{\mathrm{gcd}(I,J)}}}\left(\lambda - \vartheta\right)}{\prod_{\eta\in \F_{q^J}\setminus \F_{q^{\mathrm{gcd}(I,J)}}}\left(\lambda - \eta\right)} 
\end{equation}
is an $\F_{q^n}$-rational component of $\mathcal{X}$.
Note that, applying the Frobenius automorphism to \eqref{eq:curve:1}, it is possible to see that $\mathcal{Y}$ is also defined by the following equation
\begin{equation}
    \label{eq:curve:frobenius:1}
    u^{q^K-1} = \frac{1}{A} \frac{\prod_{\vartheta\in \F_{q^I}\setminus \F_{q^{\mathrm{gcd}(I,J)}}}\left(\lambda - \vartheta\right)}{\prod_{\eta\in \F_{q^J}\setminus \F_{q^{\mathrm{gcd}(I,J)}}}\left(\lambda - \eta\right)}, 
\end{equation}
where $A:=\left(-\alpha\tilde{\mu}^{q^J}\right)^{q^{-I}}$.

\item If instead $I > J$, let $K:=I - J$. The curve
\begin{equation}
\label{eq:curve:2}
    \mathcal{Y}: u^{q^I-q^J} = -\alpha \tilde{\mu}^{q^J}\frac{\prod_{\eta\in \F_{q^J}\setminus \F_{q^{\mathrm{gcd}(I,J)}}}\left(\lambda - \eta\right)}{\prod_{\vartheta\in \F_{q^I}\setminus \F_{q^{\mathrm{gcd}(I,J)}}}\left(\lambda - \vartheta\right)} 
\end{equation}
is an $\F_{q^n}$-rational component of $\mathcal{X}$.
As above, applying the Frobenius automorphism to \eqref{eq:curve:2}, it is possible to see that $\mathcal{Y}$ is also defined by the following equation
\begin{equation}
    \label{eq:curve:frobenius:2}
    u^{q^K-1} = B \frac{\prod_{\eta\in \F_{q^J}\setminus \F_{q^{\mathrm{gcd}(I,J)}}}\left(\lambda - \eta\right)}{\prod_{\vartheta\in \F_{q^I}\setminus \F_{q^{\mathrm{gcd}(I,J)}}}\left(\lambda - \vartheta\right)}, 
\end{equation}
where $B:=\left(-\alpha\tilde{\mu}^{q^J}\right)^{q^{-J}}$.
\end{enumerate}

In case (a) (resp. (b)), as $\mathrm{gcd}(q^K-1,q)=1$ and $\frac{\prod_{\vartheta\in \F_{q^I}\setminus \F_{q^{\mathrm{gcd}(I,J)}}}\left(\lambda - \vartheta\right)}{\prod_{\eta\in \F_{q^J}\setminus \F_{q^{\mathrm{gcd}(I,J)}}}\left(\lambda - \eta\right)}$ has valuation either 0, 1 or $-1$ at all the places of $\mathbb{P}^1(\overline{\F}_{q^n})$, except possibly at the place at infinity, we have that the projective closure $\overline{\mathcal{Y}}$ of \eqref{eq:curve:frobenius:1} (resp. \eqref{eq:curve:frobenius:2}) is a Kummer cover of $\mathbb{P}^1(\F_{q^n})$.

Hence, we can readily compute the genus $g'$ of $\overline{\mathcal{Y}}$ following \cite[Corollary 3.7.4]{Stichtenoth}:
\begin{equation*}
\begin{split}
    g' &= 1 - (q^K-1) + \frac{1}{2}\left(q^K - 2\right)\left(q^I + q^J - 2q^{\mathrm{gcd}(I,J)}\right)\\
    &= \frac{q^{\mathrm{max}\{I,J\}+K}}{2} + G(q),
\end{split}
\end{equation*}
where $G(q)$ is a polynomial in $q$ of degree  $\mathrm{max}\{I,J\}$. Note that there are at most $(q+2)\deg(\mathcal{Y})\leq (q+2)(q^K-1)(q^I-q^{\gcd(I,J)})$ places centered at points on the line at infinity or on $(\lambda^q-\lambda)u=0$.
By the Hasse-Weil bound of Theorem \ref{thm:hasse-weil}, we hence have that 
\begin{equation*}
    |\overline{\mathcal{Y}}(\F_{q^n})| \geq q^n +1 -2g'q^{\frac{n}{2}} -(q+2)(q^K-1)(q^I-q^{\gcd(I,J)})>1
\end{equation*}
as, by assumption, $\max\{I,J\}\leq n/4$ and thus there exists  $\lambda\in \F_{q^n}\setminus\mathbb{F}_q$ such that \eqref{eq:0} is satisfied for non-zero values of $u,v$ and hence the set $U_{\alpha, \beta, \gamma}^{I,J,n}$ is not scattered.
\end{proof}

To prove the indecomposability of $U_{\alpha, \beta, \gamma}^{I,J,n}$ the following result will be crucial. 
\begin{theorem}
\label{thm:evasive}
If  $P^{I,J}_{\alpha,\beta,\gamma}(X)$ has no roots in $\F_{q^n}$ then $U_{\alpha, \beta, \gamma}^{I,J,n}$ is $(2,2\max\{I,J\})_q $-evasive. 
\end{theorem}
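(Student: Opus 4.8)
The plan is to show that $\dim_{\fq}(U\cap H)\le 2m$ for every $2$-dimensional $\Fn$-subspace $H\subseteq V(4,q^n)$, where $m:=\max\{I,J\}$ and $U:=U_{\alpha,\beta,\gamma}^{I,J,n}$. Writing $\varphi(x,y):=\left(x,y,x^{q^I}+\alpha y^{q^J},x^{q^J}+\beta y^{q^I}+\gamma y^{q^J}\right)$, the projection onto the first two coordinates is an $\fq$-isomorphism $U\to\Fn^2$, so $\dim_{\fq}(U\cap H)=\dim_{\fq}S$ with $S:=\{(x,y)\in\Fn^2:\varphi(x,y)\in H\}$. First I would treat the case $\dim_{\Fn}\langle\varphi(S)\rangle_{\Fn}\le 1$: then $\varphi(S)$ lies on a single $\Fn$-line and, as $U$ is scattered by Theorem~\ref{thm:scattered:1}, we get $\dim_{\fq}S\le 1$. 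Hence I may assume $\varphi(S)$ spans $H$ and fix $\varphi(u_1,v_1),\varphi(u_2,v_2)\in U\cap H$ forming an $\Fn$-basis of $H$.

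Every element of $U\cap H$ equals $\lambda_1\varphi(u_1,v_1)+\lambda_2\varphi(u_2,v_2)$ for a unique $(\lambda_1,\lambda_2)\in\Fn^2$; since the first two coordinates merely recover $x$ and $y$, comparing the third and fourth coordinates (by the same manipulation as in the proof of Theorem~\ref{thm:scattered:1}) yields two $\fq$-linearized equations $(\star)$ and $(\star\star)$ in $\lambda_1,\lambda_2$, each of degree $q^m$ as a polynomial, and $\dim_{\fq}(U\cap H)$ equals the $\fq$-dimension of their common solution set. Viewing $(\star)$ and $(\star\star)$ as plane curves of degree $q^m$, the main tool is Bézout's theorem: if they share no common component, their intersection over $\overline{\Fn}$ has at most $q^{2m}$ points, and as the $\Fn$-solutions form an $\fq$-subspace contained in it we conclude $q^{\dim_{\fq}(U\cap H)}\le q^{2m}$, i.e. $\dim_{\fq}(U\cap H)\le 2m$.

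The step I expect to be the main obstacle is ruling out a common component. Here the key observation is that the top-degree form of each of $(\star),(\star\star)$ is a $q^m$-th power of a linear form in $\lambda_1,\lambda_2$: for $I<J$ these are proportional to $v_1\lambda_1+v_2\lambda_2$ and to $(u_1^{q^J}+\gamma v_1^{q^J})^{1/q^J}\lambda_1+(u_2^{q^J}+\gamma v_2^{q^J})^{1/q^J}\lambda_2$ respectively (and symmetrically for $I>J$). A common component forces these linear forms to be proportional, which a direct computation shows is equivalent to $u_1v_2=u_2v_1$, i.e. to $(u_1,v_1)$ and $(u_2,v_2)$ being $\Fn$-proportional. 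Thus whenever they are not proportional the leading forms are coprime, there is no common component, and Bézout already yields the bound.

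It remains to settle the proportional case $(u_2,v_2)=\mu(u_1,v_1)$, necessarily with $\mu\in\Fn\setminus\fq$ (otherwise $\varphi(u_1,v_1),\varphi(u_2,v_2)$ would be $\Fn$-dependent). Then every $(x,y)\in S$ equals $\nu(u_1,v_1)$, so the system collapses to the single unknown $\nu:=\lambda_1+\lambda_2\mu$: the third and fourth coordinate conditions read $A_3(\nu)=\lambda_2 A_3(\mu)$ and $A_4(\nu)=\lambda_2 A_4(\mu)$, where $A_3(\nu)=(\nu^{q^I}-\nu)u_1^{q^I}+\alpha(\nu^{q^J}-\nu)v_1^{q^J}$ and $A_4(\nu)=(\nu^{q^J}-\nu)u_1^{q^J}+\beta(\nu^{q^I}-\nu)v_1^{q^I}+\gamma(\nu^{q^J}-\nu)v_1^{q^J}$ are $\fq$-linearized with leading term $\nu^{q^m}$. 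Eliminating $\lambda_2$ produces the single linearized equation $A_3(\mu)A_4(\nu)-A_4(\mu)A_3(\nu)=0$, whose solution space has $\fq$-dimension at most $m$ (its $q$-degree) as soon as it is not identically zero; the degenerate subcase $A_3(\mu)=0$ forces $S\subseteq\ker A_3$, again of dimension at most $m$. Finally I would observe that this equation vanishes identically exactly when $A_4$ is an $\Fn$-multiple of $A_3$; comparing the coefficients of $\nu^{q^I}$ and $\nu^{q^J}$ turns this into $u_1^{q^I+q^J}+\gamma u_1^{q^I}v_1^{q^J}-\alpha\beta v_1^{q^I+q^J}=0$, that is, into $P^{I,J}_{\alpha,\beta,\gamma}$ having the root $u_1^{q^I}/v_1^{q^I}\in\Fn$ (the case $v_1=0$ being immediate), which is excluded by hypothesis. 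Hence $\dim_{\fq}(U\cap H)\le m\le 2m$, completing the bound.
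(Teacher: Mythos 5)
Your proof is correct and follows essentially the same route as the paper's: both arguments reduce the count to the intersection of two plane curves of degree $q^{\max\{I,J\}}$, rule out a common component by showing the two curves meet the line at infinity in distinct single points (your ``non-proportional leading forms''), and invoke B\'ezout, with the hypothesis on $P^{I,J}_{\alpha,\beta,\gamma}$ used only in the degenerate case where the two spanning vectors have proportional first two coordinates (the paper's case $xt-yz=0$); your elimination to a single univariate linearized polynomial in that degenerate case is a tidier packaging of the paper's four subcases, but it rests on exactly the same identity $u_1^{q^I+q^J}+\gamma u_1^{q^I}v_1^{q^J}-\alpha\beta v_1^{q^I+q^J}\neq 0$. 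One small caveat: to dispose of the case in which $U\cap H$ spans only an $\Fn$-line you invoke Theorem \ref{thm:scattered:1}, which assumes $\gcd(I,J,n)=1$, whereas Theorem \ref{thm:evasive} does not; this is easily repaired without that assumption, since \eqref{eq:3} and \eqref{eq:4} together with the no-roots hypothesis force $\lambda\in\F_{q^{\gcd(I,J,n)}}$, so such an intersection has $\fq$-dimension at most $\gcd(I,J,n)\le\max\{I,J\}$.
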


\begin{proof}
To prove that $U_{\alpha, \beta, \gamma}^{I,J,n}$ is $(2,2\max\{I,J\})_q $-evasive, we need to show that any $\F_{q^n}$-subspace of dimension 2 contains at most $q^{2\max\{I,J\}}$ vectors of $U_{\alpha, \beta, \gamma}^{I,J,n}$.

Let $w_1:=\left(x,y,x^{q^I}+\alpha y^{q^J},x^{q^J}+\beta y^{q^I} + \gamma y^{q^J}\right)$ and $w_2:=\left(z,t,z^{q^I}+\alpha t^{q^J},z^{q^J}+\beta t^{q^I} + \gamma t^{q^J}\right)$ be two vectors in $U_{\alpha, \beta, \gamma}^{I,J,n}$ that are $\F_{q^n}$-independent. A vector $w_3:=\left(u,v,u^{q^I}+\alpha v^{q^J},u^{q^J}+\beta v^{q^I} + \gamma v^{q^J}\right)$ lies in $\langle w_1,w_2\rangle_{\F_{q^n}}$ if and only if the following matrix 
\begin{equation*}
    \mathfrak{M}:=\begin{pmatrix}
    x & y & x^{q^I}+\alpha y^{q^J} & x^{q^J}+\beta y^{q^I} + \gamma y^{q^J}\\
    z & t & z^{q^I}+\alpha t^{q^J} & z^{q^J}+\beta t^{q^I} + \gamma t^{q^J}\\
    u & v & u^{q^I}+\alpha v^{q^J} & u^{q^J}+\beta v^{q^I} + \gamma v^{q^J}\\
    \end{pmatrix}
\end{equation*}  has rank $2$.

We now study the number of $(u,v)\in \Fn^2$ such that $\rk(\mathfrak{M})=2$, by imposing the $3\times 3$ minors of $\mathfrak{M}$ to be zero.

If $xt-yz\neq 0$, this is equivalent to determining the number of solutions of the following system
\begin{numcases}{}
    u^{q^I}+\alpha v^{q^J} - Av + Bu = 0 \label{eq:orlati:1}\\
    u^{q^J}+\beta v^{q^I} + \gamma v^{q^J} - Cv + Du = 0, \label{eq:orlati:2}
\end{numcases}
where
\begin{equation*}
\begin{split}
    A &:= \frac{x \left(z^{q^I}+\alpha t^{q^J}\right) - z\left(x^{q^I}+\alpha y^{q^J}\right)}{xt-yz},\\
    B &:= \frac{y \left(z^{q^I}+\alpha t^{q^J}\right) - t\left(x^{q^I}+\alpha y^{q^J}\right)}{xt-yz},\\
    C &:= \frac{x \left(z^{q^J}+\beta t^{q^I} + \gamma t^{q^J}\right) - z\left(x^{q^J}+\beta y^{q^I} + \gamma y^{q^J}\right)}{xt-yz},\\
    D &:= \frac{y \left(z^{q^J}+\beta t^{q^I} + \gamma t^{q^J}\right) - t\left(x^{q^J}+\beta y^{q^I} + \gamma y^{q^J}\right)}{xt-yz}.
\end{split}
\end{equation*}
Note that \eqref{eq:orlati:1} and \eqref{eq:orlati:2} define two plane curves
\begin{equation*}
    \begin{split}
        \mathcal{X}_1 &: u^{q^I}+\alpha v^{q^J} - Av + Bu = 0, \\
        \mathcal{X}_2 &: u^{q^J}+\beta v^{q^I} + \gamma v^{q^J} - Cv + Du = 0
    \end{split}
\end{equation*}
in $\mathbb{A}^2(\F_{q^n})$, with coordinates $(u , v)$.
Hence, we can estimate the number of solutions of the previous system by estimating the number of intersections of such curves. To this aim, in order to use Bézout's theorem, we first show that $\mathcal{X}_1$ and $\mathcal{X}_2$ have no common components. Consider the projective closures $\overline{\mathcal{X}_1}$ and $\overline{\mathcal{X}_2}$ of the curves in $\mathbb{P}^2(\F_{q^n})$, with coordinates $[u : v : w]$ and $r: w=0$ being the line at infinity. 

\begin{itemize}
    \item Suppose that $I<J$. Then $\overline{\mathcal{X}_1}\cap r=\{[1 : 0 : 0]\}$, while $\overline{\mathcal{X}_2}\cap r=\{[-\gamma^{q^{-J}} : 1 : 0]\}$. 
    \item Suppose that $I>J$. Then $\overline{\mathcal{X}_1}\cap r=\{[0 : 1 : 0]\}$, while $\overline{\mathcal{X}_2}\cap r=\{[1: 0 : 0]\}$. 
\end{itemize}

In both  cases, as the curves intersect the same line $r$ in two different points, we conclude that they cannot have a common component, otherwise we would find the points of intersection of such a component with $r$ appearing in $\left(\overline{\mathcal{X}_1}\cap r\right) \cap \left(\overline{\mathcal{X}_2}\cap r\right)$, which we have shown to be empty. \\  Then, by Bézout's Theorem, we have that the number of solutions of the system defined by \eqref{eq:orlati:1} and \eqref{eq:orlati:2} is at most $q^{2\max\{I,J\}}$.

If instead $xt-yz=0$, we distinguish a number of cases.
\begin{enumerate}
    \item[(i)] If $x \left(z^{q^I}+\alpha t^{q^J}\right) - z\left(x^{q^I}+\alpha y^{q^J}\right)\neq 0$, then the $2\times 2$ submatrix of $\mathfrak{M}$ given by 
\begin{equation*}
    \begin{pmatrix}
    x &  x^{q^I}+\alpha y^{q^J} \\
    z &  z^{q^I}+\alpha t^{q^J} \\
    \end{pmatrix}
\end{equation*}
has determinant different from zero. Hence, in this case we need to estimate the number of solutions of the following system:
\begin{numcases}{}
    v = A_1 u \label{eq:orlati:3}\\
    A_2 u + C_2 \left(u^{q^I} + \alpha v^{q^J}\right) + \left(u^{q^J} + \beta v^{q^I} + \gamma v^{q^J}\right) = 0, \label{eq:orlati:4}
\end{numcases}
where
\begin{equation*}
\begin{split}
    A_1 &:= \frac{y \left(z^{q^I}+\alpha t^{q^J}\right) - t\left(x^{q^I}+\alpha y^{q^J}\right)}{x \left(z^{q^I}+\alpha t^{q^J}\right) - z\left(x^{q^I}+\alpha y^{q^J}\right)},\\
    A_2 &:= \frac{\left(x^{q^I}+\alpha y^{q^J}\right)\left(z^{q^J}+\beta t^{q^I} + \gamma t^{q^J}\right) - \left(z^{q^I}+\alpha t^{q^J}\right)\left(x^{q^J}+\beta y^{q^I} + \gamma y^{q^J}\right)}{x \left(z^{q^I}+\alpha t^{q^J}\right) - z\left(x^{q^I}+\alpha y^{q^J}\right)},\\
    C_2 &:= -\frac{x \left(z^{q^J}+\beta t^{q^I} + \gamma t^{q^J}\right) - z\left(x^{q^J}+\beta y^{q^I} + \gamma y^{q^J}\right)}{x \left(z^{q^I}+\alpha t^{q^J}\right) - z\left(x^{q^I}+\alpha y^{q^J}\right)}.
\end{split}
\end{equation*}
Therefore, in order to apply Bézout's Theorem, we show that the curve defined by \eqref{eq:orlati:3} is not a component of the curve defined by \eqref{eq:orlati:4}. Note that, if $A_2\neq 0$, this follows immediately. We consider hence the case $A_2=0$. The above system reads 
\begin{equation*}
    \begin{cases}
    v = A_1 u \\
    C_2 \left(u^{q^I} + \alpha A_1^{q^J}u^{q^J}\right) + \left(u^{q^J} + \beta A_1^{q^I}u^{q^I} + \gamma A_1^{q^J}u^{q^J}\right) = 0,
    \end{cases}
\end{equation*}
and the curve defined by \eqref{eq:orlati:3} is a component of the curve defined by \eqref{eq:orlati:4} if and only if the polynomial
\begin{equation*}
    C_2 \left(u^{q^I} + \alpha A_1^{q^J}u^{q^J}\right) + \left(u^{q^J} + \beta A_1^{q^I}u^{q^I} + \gamma A_1^{q^J}u^{q^J}\right)
\end{equation*}
is identically zero, i.e., if and only if $(A_1,C_2)$ satisfies the following system of equations:
\begin{equation}
\label{eq:system:1}
    \begin{cases}
    \beta A_1^{q^I} + C_2 = 0 \\
    1 + \gamma A_1^{q^J} + \alpha C_2 A_1^{q^J} = 0.
    \end{cases}
\end{equation}
From the first equation of \eqref{eq:system:1} we  have $C_2 = -\beta A_1^{q^I}$ and, substituting in the second equation, this gives
\begin{equation}
    \label{eq:polynomial:1}
    1 + \gamma A_1^{q^J} - \alpha\beta A_1^{q^J + q^I} = 0.
\end{equation}

Setting $X:= A_1^{-q^{I}}$ or $X:= A_1^{-q^{J}}$,  \eqref{eq:polynomial:1} corresponds to  $P^{I,J}_{\alpha,\beta,\gamma}(X)=0$ and by assumption it  has no solutions in $\F_{q^n}$. This  shows that the curve defined by \eqref{eq:orlati:3} is not a component of the curve defined by \eqref{eq:orlati:4}.  Therefore, by B\'ezout's theorem, the number of solutions of the system defined by \eqref{eq:orlati:3} and \eqref{eq:orlati:4} is at most $q^{\max\{I,J\}}$.

\item[(ii)] The case $y \left(z^{q^I}+\alpha t^{q^J}\right) - t\left(x^{q^I}+\alpha y^{q^J}\right)\neq 0$ can be treated analogously to the previous one, as we consider the $2\times 2$ submatrix of $\mathfrak{M}$ given by 
\begin{equation*}
    \begin{pmatrix}
    y &  x^{q^I}+\alpha y^{q^J} \\
    t &  z^{q^I}+\alpha t^{q^J}\\
    \end{pmatrix}.
\end{equation*}
\item[(iii)] If $x \left(z^{q^J}+\beta t^{q^I} + \gamma t^{q^J}\right) - z\left(x^{q^J}+\beta y^{q^I} + \gamma y^{q^J}\right)\neq 0$, we consider the $2\times 2$ submatrix of $\mathfrak{M}$ given by 
\begin{equation*}
    \begin{pmatrix}
    x &  x^{q^J}+\beta y^{q^I} + \gamma y^{q^J} \\
    z &  z^{q^J}+\beta t^{q^I} + \gamma t^{q^J} \\
    \end{pmatrix},
\end{equation*}
which has non-zero determinant. Proceeding as in the previous cases, we consider the system
\begin{equation*}
    \begin{cases}
     v = \tilde{A_1} u \\
    \tilde{A_2} u + \left(u^{q^I} + \alpha v^{q^J}\right) + \tilde{C_2}\left(u^{q^J} + \beta v^{q^I} + \gamma v^{q^J}\right) = 0,
    \end{cases}
\end{equation*}
where
\begin{equation*}
\begin{split}
    \tilde{A_1} &:= \frac{y \left(z^{q^J}+\beta t^{q^I} + \gamma t^{q^J}\right) - t\left(x^{q^J}+\beta y^{q^I} + \gamma y^{q^J}\right)}{x \left(z^{q^J}+\beta t^{q^I} + \gamma t^{q^J}\right) - z\left(x^{q^J}+\beta y^{q^I} + \gamma y^{q^J}\right)},\\
    \tilde{A_2} &:= -\frac{\left(x^{q^I}+\alpha y^{q^J}\right)\left(z^{q^J}+\beta t^{q^I} + \gamma t^{q^J}\right) - \left(z^{q^I}+\alpha t^{q^J}\right)\left(x^{q^J}+\beta y^{q^I} + \gamma y^{q^J}\right)}{x \left(z^{q^J}+\beta t^{q^I} + \gamma t^{q^J}\right) - z\left(x^{q^J}+\beta y^{q^I} + \gamma y^{q^J}\right)},\\
    \tilde{C_2} &:= -\frac{x \left(z^{q^I}+\alpha t^{q^J}\right) - z\left(x^{q^I}+\alpha y^{q^J}\right)}{x \left(z^{q^J}+\beta t^{q^I} + \gamma t^{q^J}\right) - z\left(x^{q^J}+\beta y^{q^I} + \gamma y^{q^J}\right)}.
\end{split}
\end{equation*}
Computations as in case (i) lead to the following system:
\begin{equation}
\label{eq:system:2}
    \begin{cases}
    1 + \beta \tilde{C_2} \tilde{A_1}^{q^I} = 0 \\
    \alpha \tilde{A_1}^{q^J} + \gamma \tilde{C_2} \tilde{A_1}^{q^J} + \tilde{C_2} = 0.
    \end{cases}
\end{equation}
From the first equation of \eqref{eq:system:2} we  have $\tilde{C_2} = -\frac{1}{\beta \tilde{A_1}^{q^I}}$ and, substituting in the second equation, this gives
\begin{equation*}
     -\alpha\beta \tilde{A_1}^{q^J + q^I} + 1 + \gamma \tilde{A_1}^{q^J} = 0.
\end{equation*}
Setting $X:= \tilde{A_1}^{-q^{I}}$ or $X:= \tilde{A_1}^{-q^{J}}$, the above equation is equivalent to   $P_{\alpha,\beta,\gamma}^{I,J}(X)=0$
and the conclusion follows as in case (i).

\item[(iv)] In the case $y \left(z^{q^J}+\beta t^{q^I} + \gamma t^{q^J}\right) - t\left(x^{q^J}+\beta y^{q^I} + \gamma y^{q^J}\right)\neq 0$, we proceed as above, this time starting from the $2\times 2$ submatrix of $\mathfrak{M}$ given by 
\begin{equation*}
    \begin{pmatrix}
    y &  x^{q^J}+\beta y^{q^I} + \gamma y^{q^J} \\
    t &  z^{q^J}+\beta t^{q^I} + \gamma t^{q^J} \\
    \end{pmatrix}.
\end{equation*}
\end{enumerate}

\end{proof}

The following is a direct consequence of Theorem \ref{thm:evasive}, combined with \cite[Theorem 3.3]{BMN2022}.
\begin{corollary}\label{cor:minimal}
 If  $P^{I,J}_{\alpha,\beta,\gamma}(X)$ has no roots in $\F_{q^n}$ and $\max\{I,J\}\leq (n-1)/2$, then $U_{\alpha, \beta, \gamma}^{I,J,n}$ is cutting, that is, for every $\Fn$-hyperplane of $V(4,q^n)$ we have $\langle H\cap U_{\alpha, \beta, \gamma}^{I,J,n} \rangle_{\Fn} = H$.  
\end{corollary}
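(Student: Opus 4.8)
The plan is to derive the statement directly from the cutting criterion of \cite[Theorem 3.3]{BMN2022} combined with the evasiveness already established in Theorem \ref{thm:evasive}, so that essentially no new analytic work is required. Write $U:=U_{\alpha,\beta,\gamma}^{I,J,n}$ and note that, being the graph over $\Fn^2$ of a pair of linearized maps, it satisfies $\dim_{\fq}(U)=2n$; under the standing hypothesis that $P^{I,J}_{\alpha,\beta,\gamma}(X)$ has no roots it is moreover maximum scattered in $V(4,q^n)$. Unravelling the definition, $U$ is cutting precisely when, for every $\Fn$-hyperplane $H$ of $V(4,q^n)$, the intersection $U\cap H$ is not contained in any $2$-dimensional $\Fn$-subspace $W\subseteq H$: indeed the $2$-dimensional $\Fn$-subspaces of $H\cong V(3,q^n)$ are exactly its hyperplanes, and $\langle U\cap H\rangle_{\Fn}=H$ fails exactly when $U\cap H$ lies in one of them. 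Since $W\subseteq H$ forces $U\cap W\subseteq U\cap H$, one has $U\cap H\subseteq W$ if and only if $U\cap W=U\cap H$, so the whole question reduces to comparing the two $\fq$-dimensions $\dim_{\fq}(U\cap H)$ and $\dim_{\fq}(U\cap W)$.

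First I would bound $\dim_{\fq}(U\cap H)$ from below. As $H$ has $\Fn$-codimension $1$, hence $\fq$-codimension $n$ in $V(4,q^n)$, the Grassmann inequality over $\fq$ gives
$$\dim_{\fq}(U\cap H)\ge \dim_{\fq}(U)-n=2n-n=n,$$
uniformly over all hyperplanes $H$. Next I would bound $\dim_{\fq}(U\cap W)$ from above for every $2$-dimensional $\Fn$-subspace $W$. This is exactly the content of Theorem \ref{thm:evasive}: under the assumption that $P^{I,J}_{\alpha,\beta,\gamma}(X)$ has no roots in $\Fn$, the subspace $U$ is $(2,2\max\{I,J\})_q$-evasive, whence
$$\dim_{\fq}(U\cap W)\le 2\max\{I,J\}.$$

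Finally I would combine the two estimates. The hypothesis $\max\{I,J\}\le (n-1)/2$ yields $2\max\{I,J\}\le n-1<n$, so that, for every $\Fn$-hyperplane $H$ and every $2$-dimensional $\Fn$-subspace $W\subseteq H$,
$$\dim_{\fq}(U\cap W)\le 2\max\{I,J\}\le n-1<n\le\dim_{\fq}(U\cap H).$$
In particular $U\cap H\neq U\cap W$, hence $U\cap H\not\subseteq W$; since this holds for every hyperplane $W$ of $H$, we conclude $\langle U\cap H\rangle_{\Fn}=H$, i.e. $U$ is cutting. This is precisely the mechanism packaged by \cite[Theorem 3.3]{BMN2022}, which records for maximum scattered subspaces of $V(k,q^n)$ the implication that being $(k-2,r)_q$-evasive with $r<\dim_{\fq}(U)-n$ forces cuttingness; here it is specialised to $k=4$.

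The main point is that all the genuine difficulty has already been absorbed into the Bézout/curve argument proving Theorem \ref{thm:evasive}, so I do not expect a real obstacle: what remains is standard dimension bookkeeping together with a correct invocation of the cited criterion. The only step that needs care is matching the evasiveness parameter to the cutting threshold, namely verifying the numeric inequality $2\max\{I,J\}<n$, which is exactly what the hypothesis $\max\{I,J\}\le (n-1)/2$ guarantees.
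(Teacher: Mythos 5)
Your proof is correct and follows essentially the same route as the paper, which simply declares the corollary a direct consequence of Theorem \ref{thm:evasive} together with \cite[Theorem 3.3]{BMN2022}; you have merely unpacked the dimension bookkeeping ($\dim_{\fq}(U\cap H)\ge n$ versus $\dim_{\fq}(U\cap W)\le 2\max\{I,J\}\le n-1$) that the cited criterion encapsulates. Note that maximum scatteredness is not actually needed for your argument, only $\dim_{\fq}(U)=2n$ (which holds since the first two coordinates make the parametrization injective) together with the evasiveness bound.
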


We are now ready to prove our main result concerning the exceptionality and indecomposability of the family $U_{\alpha, \beta, \gamma}^{I,J,n}$.

\begin{theorem}\label{thm:exceptional_scattered}
For fixed $n, \alpha, \beta, \gamma, I\neq J$, with $\gcd(I,J,n)=1$, suppose that $P_{\alpha,\beta,\gamma}^{I,J}(X)$ has no roots in $\mathbb{F}_{q^n}$. Then the set $U_{\alpha, \beta, \gamma}^{I,J,n}$ is scattered and indecomposable over infinitely many extensions $\mathbb{F}_{q^{\ell n}}$ of $\mathbb{F}_{q^n}$.
\end{theorem}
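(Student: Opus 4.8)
The plan is to combine the three results already established: Theorem~\ref{thm:scattered:1} supplies scatteredness over infinitely many extensions, Theorem~\ref{thm:evasive} supplies a strong evasiveness estimate, and Lemma~\ref{Lemma:ind} converts that estimate into indecomposability. The whole argument takes place over the tower of extensions $\F_{q^{nt}}$ of $\F_{q^n}$, and the only genuine content beyond invoking these three results is checking that their hypotheses hold simultaneously for infinitely many $t$.

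First I would recall the discussion in the proof of Theorem~\ref{thm:scattered:1}: letting $\F_{q^{n\ell_0}}$ be the splitting field of $P_{\alpha,\beta,\gamma}^{I,J}(X)$, every integer $t$ with $\gcd(I,J,t)=1$ and $\gcd(\ell_0,t)=1$ satisfies both $\gcd(I,J,nt)=1$ and the property that $P_{\alpha,\beta,\gamma}^{I,J}(X)$ has no roots in $\F_{q^{nt}}$. There are infinitely many such $t$, and for each of them $U_{\alpha,\beta,\gamma}^{I,J,nt}$ is scattered by Theorem~\ref{thm:scattered:1} applied over $\F_{q^{nt}}$ in place of $\F_{q^n}$. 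This already settles the scattered half of the statement; it remains to upgrade infinitely many of these extensions to indecomposability.

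For the indecomposability, I would fix such a $t$ subject to the additional (cofinitely many) requirement $nt\geq 2\max\{I,J\}+1$. Since $P_{\alpha,\beta,\gamma}^{I,J}(X)$ has no roots in $\F_{q^{nt}}$, Theorem~\ref{thm:evasive} read over $\F_{q^{nt}}$ tells us that $U_{\alpha,\beta,\gamma}^{I,J,nt}$ is $(2,2\max\{I,J\})_q$-evasive; as $2\max\{I,J\}\leq nt-1$, it is in particular $(2,nt-1)_q$-evasive. Now I would apply Lemma~\ref{Lemma:ind} to the scattered sequence $\mathcal F=(x^{q^I}+\alpha y^{q^J},\, x^{q^J}+\beta y^{q^I}+\gamma y^{q^J})$ with $\mathcal I=(0,0)$, in which $h=1$, the order is $m=2$ and $s=2$, so $m+s=4$ and $\lfloor(m+s)/2\rfloor=2$. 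The range of indices in the lemma collapses to the single value $r=2$; the divisibility condition $(h+1)\mid rn$ becomes $2\mid 2nt$, which always holds; and the evasiveness required is exactly $rn/(h+1)-1=2nt/2-1=nt-1$, i.e.\ $(2,nt-1)_q$-evasiveness, which we have just verified. Lemma~\ref{Lemma:ind} then yields that $U_{\alpha,\beta,\gamma}^{I,J,nt}$ is indecomposable.

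Putting these together, for every $t$ in the infinite set selected in the first step that also satisfies $nt\geq 2\max\{I,J\}+1$, the subspace $U_{\alpha,\beta,\gamma}^{I,J,nt}$ is simultaneously scattered and indecomposable, which is the assertion over the extensions $\F_{q^{\ell n}}$. I do not expect a serious obstacle: the mathematical substance is entirely contained in the cited results, and the only care needed is bookkeeping, namely confirming that the set of admissible $t$ is infinite and that the \emph{fixed} quantity $\max\{I,J\}$ eventually drops below the threshold $(nt-1)/2$ as $t$ grows, so that the evasiveness bound of Theorem~\ref{thm:evasive} is strong enough to feed Lemma~\ref{Lemma:ind}.
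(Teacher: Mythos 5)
Your proposal is correct and follows essentially the same route as the paper's own proof: invoke Theorem~\ref{thm:scattered:1} for exceptional scatteredness, note that for all sufficiently large $t$ one has $\max\{I,J\}\leq (nt-1)/2$ so that Theorem~\ref{thm:evasive} gives $(2,nt-1)$-evasiveness, and conclude by Lemma~\ref{Lemma:ind} with $m=s=2$, $h=1$ (forcing $r=2$). Your write-up merely makes explicit the bookkeeping (the choice of the infinite set of admissible $t$ and the collapse of the range of $r$ to the single value $2$) that the paper leaves implicit.
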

\begin{proof}
The set $U_{\alpha, \beta, \gamma}^{I,J,n}$ is exceptional scattered by Theorem \ref{thm:scattered:1}. Note that for any $\ell$ large enough we have that $\max\{I,J\} \leq (n\ell-1)/2$ and thus $U_{\alpha, \beta, \gamma}^{I,J,n}$ is $(2,n-1)$-evasive by Theorem \ref{thm:evasive}. The claim follows by Lemma \ref{Lemma:ind}, with $m=s=2$ and $h=1$.
\end{proof}

We conclude by observing the main properties of the codes $C_{\alpha, \beta, \gamma}^{I,J,n}$ associated with the $\fq$-subspaces $U_{\alpha, \beta, \gamma}^{I,J,n}$.

\begin{remark} 
Let $I,J,\alpha,\beta,\gamma$ be such that  $P^{I,J}_{\alpha,\beta,\gamma}(X)$ has no roots in $\F_{q^n}$, and let us consider any code $C_{\alpha, \beta, \gamma}^{I,J,n}$ associated with $U_{\alpha, \beta, \gamma}^{I,J,n}$, that is, $C_{\alpha, \beta, \gamma}^{I,J,n}\in\Psi([U_{\alpha, \beta, \gamma}^{I,J,n}])$. First of all, $C_{\alpha, \beta, \gamma}^{I,J,n}$ is an MRD code of dimension $\dim_{\Fn}(C_{\alpha, \beta, \gamma}^{I,J,n})=4$. This is a consequence of \cite[Theorem 3.2]{csajbok2017maximum}. Furthermore, since $U_{\alpha, \beta, \gamma}^{I,J,n}$ is $1$-scattered (Theorem \ref{thm:scattered:1}), we also derive that the third generalized rank weight is $2n-1$, and by Theorem \ref{thm:evasive}, we also derive that the second generalized rank weight of $C_{\alpha, \beta, \gamma}^{I,J,n}$ is at least   
$2(n-\max\{I,J\})$; see \cite[Theorem 3]{randrianarisoa2020geometric}, \cite[Theorem 3.3]{marino2022evasive}. Thus, when $\max\{I,J\}\leq (n-1)/2$, we have that this second generalized rank weight is at least $n+1$.  By \cite[Proposition 4.10]{BMN2022} a decomposable code $\mathcal{D}=\mathcal{D}_1\oplus \mathcal{D}_2$ where $\mathcal{D}_1$  and $\mathcal{D}_2$ are $[n,2]_{q^n/q}$ MRD codes has second rank generalized weight equal to $n$. Since it is easy to see that generalized rank weights are invariant under code equivalence, we immediately derive that the codes $\mathcal{C}_{\alpha, \beta, \gamma}^{I,J,n}$ are new and inequivalent from already known codes.
Finally, by Corollary \ref{cor:minimal} and \cite[Corollary 5.7]{ABNR22}, the code $C_{\alpha, \beta, \gamma}^{I,J,n}$ is \emph{minimal}, that is, the set of supports of its nonzero codewords is an antichain and it has cardinality $\frac{q^{4n}-1}{q^n-1}$. We refer the reader to \cite{ABNR22} for a comprehensive understanding of minimal rank-metric codes.
\end{remark}

\subsection{Equivalence issue}\label{sec:equivalence}

Let $U_{\alpha, \beta, \gamma}^{I,J,n}$ and $U_{\overline{\alpha}, \overline{\beta}, \overline{\gamma}}^{I_0,J_0,n}$ be two sets as in Definition \ref{defn:sets},
\begin{equation}
\label{eq:sets:gammaell}
    \begin{split}
        &U_{\alpha, \beta, \gamma}^{I,J,n} =\left\{\left(x,y,x^{q^I}+\alpha y^{q^J},x^{q^J}+\beta y^{q^I} + \gamma y^{q^J}\right) \ : \ x,y\in \F_{q^n}\right\}\\
        &U_{\overline{\alpha}, \overline{\beta}, \overline{\gamma}}^{I_0,J_0,n}=\left\{\left(u,v,u^{q^{I_0}}+\overline{\alpha} v^{q^{J_0}},u^{q^{J_0}}+\overline{\beta} v^{q^{I_0}} + \overline{\gamma} v^{q^{J_0}}\right) \ : \ u,v\in \F_{q^n}\right\}.
    \end{split}
\end{equation}
The sets $U_{\alpha, \beta, \gamma}^{I,J,n}$ and $U_{\overline{\alpha}, \overline{\beta}, \overline{\gamma}}^{I_0,J_0,n}$ are $\Gamma$L$(4,q^n)$-equivalent if and only if there exist $\sigma\in \mathrm{Aut}(\F_{q^n})$ and
\begin{equation}
\label{eq:mat:gammaell}
\mathfrak{N}:=\begin{pmatrix}
a_{11}&a_{12}&a_{13}&a_{14}\\
a_{21}&a_{22}&a_{23}&a_{24}\\
a_{31}&a_{32}&a_{33}&a_{34}\\
a_{41}&a_{42}&a_{43}&a_{44}
\end{pmatrix}\in \mathrm{GL}(4,\F_{q^n})
\end{equation}
such that 
\begin{equation*}
\begin{pmatrix}
a_{11}&a_{12}&a_{13}&a_{14}\\
a_{21}&a_{22}&a_{23}&a_{24}\\
a_{31}&a_{32}&a_{33}&a_{34}\\
a_{41}&a_{42}&a_{43}&a_{44}
\end{pmatrix}
\begin{pmatrix}
x^\sigma\\
y^\sigma\\
\left(x^{q^I}+\alpha y^{q^J}\right)^\sigma\\
\left(x^{q^J}+\beta y^{q^I} + \gamma y^{q^J}\right)^\sigma
\end{pmatrix}=
\begin{pmatrix}
u\\
v\\
u^{q^{I_0}}+\overline{\alpha} v^{q^{J_0}}\\
u^{q^{J_0}}+\overline{\beta} v^{q^{I_0}} + \overline{\gamma} v^{q^{J_0}}
\end{pmatrix}.
\end{equation*}

Before proving Theorem \ref{thm:equivalence} on the $\Gamma\mathrm{L}(4,q^n)$-equivalence classes for the sets introduced in Definition \ref{defn:sets}, we establish some notations that will be useful in the proof.

Let $K:= J - I$ and
\begin{eqnarray}
  \small  \rho^{q^K}:= \left(\frac{\beta}{\overline{\beta}}\right)^{q^{K-I}}\left(\frac{\overline{\alpha}}{\alpha}\right)^{q^{-I}}, \quad \vartheta^{q^K}:= \left(\frac{\beta}{\overline{\beta}}\right)^{q^{K-I}}\left(\frac{\overline{\alpha}}{\alpha}\right)^{q^{-I}}\overline{\gamma}^{q^{K-I}}, \quad \sigma^{q^K}:= - \left(\frac{\overline{\alpha}}{\alpha}\right)^{q^{-I}}\gamma^{q^{-I}},\nonumber\\
    \mu^{q^K}:= \left(\frac{\overline{\alpha}\overline{\beta}}{\overline{\gamma}}\right)^{q^{-I}} , \qquad \nu^{q^K}:= \left(\frac{\gamma \overline{\alpha}}{\overline{\gamma}\alpha}\right)^{q^{-I}} , \qquad \xi^{q^K}:= - \left(\frac{\beta^{q^K}\overline{\alpha}^{q^K+1}}{\overline{\gamma}\alpha}\right)^{q^{-I}}.\label{eq:not:1}
\end{eqnarray}

\begin{theorem}
\label{thm:equivalence}
Let $0< I,J,I_0,J_0\leq (n-1)/2$, $I\neq J$ and $I_0\neq J_0$. 
Consider two sets $U_{\alpha, \beta, \gamma}^{I,J,n}$ and $U_{\overline{\alpha}, \overline{\beta}, \overline{\gamma}}^{I_0,J_0,n}$, with notations as in \eqref{eq:sets:gammaell}, \eqref{eq:mat:gammaell}, and \eqref{eq:not:1}.

Then $U_{\alpha, \beta, \gamma}^{I,J,n}$ and $U_{\overline{\alpha}, \overline{\beta}, \overline{\gamma}}^{I_0,J_0,n}$ are not $\Gamma\mathrm{L}(4,q^n)$-equivalent if one of the following conditions holds:
\begin{enumerate}
    \item $(I,J)\neq (I_0, J_0)$;
    \item $(I,J)= (I_0, J_0)$ and 
    $$
    \begin{cases}
    X = \rho^{q^K}X^{q^{2K}} + \vartheta^{q^K}Y^{q^{2K}} + \sigma^{q^K}Y^{q^K}\\
    X = \mu^{q^K}Y + \nu^{q^K}X^{q^K} + \xi^{q^K}Y^{q^{2K}}.
    \end{cases}
    $$
    has no solutions $(x,y)\in \mathbb{F}_{q^n}^2\setminus\{(0,0)\}$
\end{enumerate}
\end{theorem}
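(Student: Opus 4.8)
The plan is to turn the semilinear equivalence into two $q$-polynomial identities and then analyse first their supports and afterwards their coefficients. Assume $U_{\alpha,\beta,\gamma}^{I,J,n}$ and $U_{\overline\alpha,\overline\beta,\overline\gamma}^{I_0,J_0,n}$ are $\Gamma\mathrm{L}(4,q^n)$-equivalent, realised by some $\sigma\in\mathrm{Aut}(\F_{q^n})$ and an invertible $\mathfrak N$ as in \eqref{eq:mat:gammaell}. Writing $\hat x:=x^\sigma$ and $\hat y:=y^\sigma$, and using that every automorphism of $\F_{q^n}$ commutes with the $q$-power Frobenius, the first two rows of the matrix equation express $u$ and $v$ as $q$-polynomials in $\hat x,\hat y$ whose exponents all lie in $\{0,I,J\}$, with coefficients that are $\F_{q^n}$-combinations of the $a_{1j},a_{2j}$ and of $\alpha^\sigma,\beta^\sigma,\gamma^\sigma$. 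Substituting these into the last two rows yields two $q$-polynomial identities in $\hat x,\hat y$ that must hold for all $x,y\in\F_{q^n}$, hence identically once exponents are reduced modulo $n$.

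First I would prove part (1) by a support analysis. On the left-hand sides of the two substituted identities the monomials $\hat x^{q^a},\hat y^{q^b}$ carry exponents in $(\{I_0,J_0\}+\{0,I,J\})\bmod n$, whereas on the right-hand sides they carry exponents in $\{0,I,J\}$; every left-hand exponent outside $\{0,I,J\}$ forces its coefficient to vanish. Using the standing hypotheses $0<I,J,I_0,J_0\le(n-1)/2$ together with $I\ne J$ and $I_0\ne J_0$, one checks that all the relevant sums $I_0+I,\,I_0+J,\,J_0+I,\,J_0+J$ lie strictly between $0$ and $n$, so no accidental collision modulo $n$ occurs and $\{I_0,J_0\}+\{0,I,J\}$ meets $\{0,I,J\}$ only in the prescribed way. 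Tracking which entries $a_{ij}$ are thereby forced to vanish, one finds that whenever $(I,J)\ne(I_0,J_0)$ the resulting zero pattern is incompatible with $\mathfrak N$ being invertible, a contradiction; this proves part (1).

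For part (2) I would set $(I,J)=(I_0,J_0)$ and $K:=J-I$, and carry out the full coefficient comparison in the two identities. Matching the coefficients of each surviving monomial $\hat x^{q^a},\hat y^{q^b}$ produces a system of scalar equations relating the $a_{ij}$ to $\alpha,\beta,\gamma,\overline\alpha,\overline\beta,\overline\gamma$ and their $\sigma$-images. Solving the equations coming from rows $3$ and $4$ for the entries of the first two rows and eliminating, one is left precisely with the two compatibility relations packaged by the auxiliary quantities $\rho,\vartheta,\sigma,\mu,\nu,\xi$ of \eqref{eq:not:1}; concretely, the existence of an admissible invertible $\mathfrak N$ forces the displayed system in (2) to have a solution $(x,y)\ne(0,0)$. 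Taking the contrapositive, if that system has no nonzero solution then no such $\mathfrak N$ exists and the two subspaces are not $\Gamma\mathrm{L}(4,q^n)$-equivalent.

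The main obstacle is twofold. In part (1) the delicate point is the modular-exponent bookkeeping: one must verify that under $\max\{I,J,I_0,J_0\}\le(n-1)/2$ there are genuinely no coincidences among the six left-hand exponents and $\{0,I,J\}$ beyond the forced ones, so that the induced vanishing pattern really does break invertibility whenever $(I,J)\ne(I_0,J_0)$. In part (2) the difficulty is the lengthy but routine elimination needed to reduce the coefficient-matching system to exactly the relations of \eqref{eq:not:1}, while keeping careful track of the $\sigma$-twists and of the two regimes $I<J$ and $I>J$.
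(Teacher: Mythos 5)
Your plan follows the paper's proof essentially step for step: substitute the first two rows of the matrix identity into the last two to get two $q$-polynomial identities in $\tilde x,\tilde y$ that must vanish identically, read off coefficient conditions, kill a row of $\mathfrak N$ in case (a), and in case (b) eliminate down to the two relations in $(a_{11},a_{21})$ encoded by $\rho,\vartheta,\sigma,\mu,\nu,\xi$, concluding by contradiction with invertibility. So the route is the same; the only point I would push back on is your assertion in part (1) that under $\max\{I,J,I_0,J_0\}\le (n-1)/2$ ``no accidental collision modulo $n$ occurs and $\{I_0,J_0\}+\{0,I,J\}$ meets $\{0,I,J\}$ only in the prescribed way.'' That is not true as stated: the bound only rules out wrap-around modulo $n$, not coincidences among the shifted exponents themselves, and degenerate configurations such as $J=I+I_0$, $J=2I$, $J=I+J_0$, $J+I_0=J_0+I$, or $J_0=3I$ genuinely occur. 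The paper's proof of case (a) consists precisely of a subcase analysis over these coincidences, choosing in each one a different set of monomials whose coefficients still force $a_{31}=a_{32}=a_{33}=a_{34}=0$. You do flag this bookkeeping as the delicate point, and the conclusion survives in every subcase, but the blanket no-collision claim should be replaced by that case distinction for the argument to be complete.
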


\begin{proof}
As noted above, two sets $U_{\alpha, \beta, \gamma}^{I,J,n}$ and $U_{\overline{\alpha}, \overline{\beta}, \overline{\gamma}}^{I_0,J_0,n}$ (as in \eqref{eq:sets:gammaell}) are $\Gamma$L$(4,q^n)$-equivalent if and only if there exist $\sigma\in \mathrm{Aut}(\F_{q^n})$ and $\mathfrak{N} \in \mathrm{GL}(4,\F_{q^n})$, with notations as in \eqref{eq:mat:gammaell},
such that 
\begin{equation}
\label{eq:gammaell:1}
\begin{pmatrix}
a_{11}&a_{12}&a_{13}&a_{14}\\
a_{21}&a_{22}&a_{23}&a_{24}\\
a_{31}&a_{32}&a_{33}&a_{34}\\
a_{41}&a_{42}&a_{43}&a_{44}
\end{pmatrix}
\begin{pmatrix}
x^\sigma\\
y^\sigma\\
\left(x^{q^I}+\alpha y^{q^J}\right)^\sigma\\
\left(x^{q^J}+\beta y^{q^I} + \gamma y^{q^J}\right)^\sigma
\end{pmatrix}=
\begin{pmatrix}
u\\
v\\
u^{q^{I_0}}+\overline{\alpha} v^{q^{J_0}}\\
u^{q^{J_0}}+\overline{\beta} v^{q^{I_0}} + \overline{\gamma} v^{q^{J_0}}
\end{pmatrix}.
\end{equation}

Throughout the proof, we denote $\tilde{x}:=x^\sigma$ and $\tilde{y}:=y^\sigma$ and we consider $\sigma$ as acting on $\alpha,\beta$ and $\gamma$ as the identity. This can be done without loss of generality, since we are working on two generic sets and $\sigma$ is an automorphism of $\F_{q^n}$ (hence, we can just rename $\alpha^{\sigma}, \beta^{\sigma}, \gamma^{\sigma}$ as $\alpha, \beta, \gamma$).

From \eqref{eq:gammaell:1}, we have the following system of equations:
\begin{equation*}
    \begin{cases}
a_{11}\tilde{x} + a_{12}\tilde{y} + a_{13}\left(\tilde{x}^{q^I}+\alpha\tilde{y}^{q^J}\right) + a_{14}\left(\tilde{x}^{q^J}+\beta\tilde{y}^{q^I}+\gamma\tilde{y}^{q^J}\right)=u\\
a_{21}\tilde{x} + a_{22}\tilde{y} + a_{23}\left(\tilde{x}^{q^I}+\alpha\tilde{y}^{q^J}\right) + a_{24}\left(\tilde{x}^{q^J}+\beta\tilde{y}^{q^I}+\gamma\tilde{y}^{q^J}\right)=v\\
a_{31}\tilde{x} + a_{32}\tilde{y} + a_{33}\left(\tilde{x}^{q^I}+\alpha\tilde{y}^{q^J}\right) + a_{34}\left(\tilde{x}^{q^J}+\beta\tilde{y}^{q^I}+\gamma\tilde{y}^{q^J}\right)=u^{q^{I_0}}+\overline{\alpha} v^{q^{J_0}}\\
a_{41}\tilde{x} + a_{42}\tilde{y} + a_{43}\left(\tilde{x}^{q^I}+\alpha\tilde{y}^{q^J}\right) + a_{44}\left(\tilde{x}^{q^J}+\beta\tilde{y}^{q^I}+\gamma\tilde{y}^{q^J}\right)=u^{q^{J_0}}+\overline{\beta} v^{q^{I_0}} + \overline{\gamma} v^{q^{J_0}}.
    \end{cases}
\end{equation*}
Substituting, we obtain the following two equations:
\begin{equation}
\label{eq:equiv:1}
    \small \begin{split}
        a_{31}\tilde{x} + a_{32}\tilde{y} + a_{33}\tilde{x}^{q^I} + a_{33}\alpha\tilde{y}^{q^J} + a_{34}\tilde{x}^{q^J} + a_{34}\beta\tilde{y}^{q^I} + a_{34}\gamma \tilde{y}^{q^J} - a_{11}^{q^{I_0}}\tilde{x}^{q^{I_0}} - a_{12}^{q^{I_0}}\tilde{y}^{q^{I_0}} &+ \\
        - a_{13}^{q^{I_0}}\left(\tilde{x}^{q^{I + I_0}} + \alpha^{q^{I_0}}\tilde{y}^{q^{J + I_0}}\right) - a_{14}^{q^{I_0}}\left(\tilde{x}^{q^{J + I_0}} + \beta^{q^{I_0}}\tilde{y}^{q^{I + I_0}} + \gamma^{q^{I_0}}\tilde{y}^{q^{J + I_0}}\right) &+ \\
        - \overline{\alpha}\left(a_{21}^{q^{J_0}}\tilde{x}^{q^{J_0}} + a_{22}^{q^{J_0}}\tilde{y}^{q^{J_0}} + a_{23}^{q^{J_0}}\left(\tilde{x}^{q^{I + J_0}} + \alpha^{q^{J_0}}\tilde{y}^{q^{J + J_0}}\right) + a_{24}^{q^{J_0}}\left(\tilde{x}^{q^{J + J_0}} + \beta^{q^{J_0}}\tilde{y}^{q^{I + J_0}} + \gamma^{q^{J_0}}\tilde{y}^{q^{J + J_0}}\right)\right)&=0,
    \end{split}
\end{equation}
\begin{equation}
\label{eq:equiv:2}
    \small \begin{split}
        a_{41}\tilde{x} + a_{42}\tilde{y} + a_{43}\tilde{x}^{q^I} + a_{43}\alpha\tilde{y}^{q^J} + a_{44}\tilde{x}^{q^J} + a_{44}\beta\tilde{y}^{q^I} + a_{44}\gamma \tilde{y}^{q^J} - a_{11}^{q^{J_0}}\tilde{x}^{q^{J_0}} - a_{12}^{q^{J_0}}\tilde{y}^{q^{J_0}} &+ \\
        - a_{13}^{q^{J_0}}\left(\tilde{x}^{q^{I + J_0}} + \alpha^{q^{J_0}}\tilde{y}^{q^{J + J_0}}\right) - a_{14}^{q^{J_0}}\left(\tilde{x}^{q^{J + J_0}} + \beta^{q^{J_0}}\tilde{y}^{q^{I + J_0}} + \gamma^{q^{J_0}}\tilde{y}^{q^{J + J_0}}\right) &+ \\
        - \overline{\beta}\left(a_{21}^{q^{I_0}}\tilde{x}^{q^{I_0}} + a_{22}^{q^{I_0}}\tilde{y}^{q^{I_0}} + a_{23}^{q^{I_0}}\left(\tilde{x}^{q^{I + I_0}} + \alpha^{q^{I_0}}\tilde{y}^{q^{J + I_0}}\right) + a_{24}^{q^{I_0}}\left(\tilde{x}^{q^{J + I_0}} + \beta^{q^{I_0}}\tilde{y}^{q^{I + I_0}} + \gamma^{q^{I_0}}\tilde{y}^{q^{J + I_0}}\right)\right)&+\\
        - \overline{\gamma}\left(a_{21}^{q^{J_0}}\tilde{x}^{q^{J_0}} + a_{22}^{q^{J_0}}\tilde{y}^{q^{J_0}} + a_{23}^{q^{J_0}}\left(\tilde{x}^{q^{I + J_0}} + \alpha^{q^{J_0}}\tilde{y}^{q^{J + J_0}}\right) + a_{24}^{q^{J_0}}\left(\tilde{x}^{q^{J + J_0}} + \beta^{q^{J_0}}\tilde{y}^{q^{I + J_0}} + \gamma^{q^{J_0}}\tilde{y}^{q^{J + J_0}}\right)\right)&=0. 
    \end{split}
\end{equation}

We wish to show that, in both cases (a) and (b) listed in the statement of the theorem, it is not possible to find an element of $\mathrm{GL}(4,\F_{q^n})$ such that \eqref{eq:equiv:1} and \eqref{eq:equiv:2} are both satisfied for any values of $\tilde{x},\tilde{y}\in \Fn$, i.e., such that the polynomials on the left hand side of  \eqref{eq:equiv:1} and \eqref{eq:equiv:2} are both identically zero. Note that this last equivalence holds because the left hand sides of  \eqref{eq:equiv:1} and \eqref{eq:equiv:2} are polynomials in $\tilde{x}$ and $\tilde{y}$ of degree smaller than $q^n$.

We prove this by considering separately the listed conditions (a) and (b).

\begin{enumerate}
    \item  $(I,J)\neq (I_0, J_0)$.

\begin{itemize}
    \item \textbf{Case} $I \neq I_0, J_0$.\\
    Considering the terms of  \eqref{eq:equiv:1}, note that we have:
    \begin{equation*}
    a_{31} \tilde{x}=0 \quad
    a_{32} \tilde{y}=0\quad
    a_{33} \tilde{x}^{q^I}=0\quad
a_{34}\beta \tilde{y}^{q^{I}}=0.
\end{equation*}
Hence, $a_{31} = a_{32} = a_{33} = a_{34} = 0$.

    \item \textbf{Case} $I = J_0$.\\

\begin{itemize}
    \item \textbf{Subcase} $J\neq I + I_0$ and $J\neq 2I$.\\
    Considering the coefficients of $\tilde{x}$, $\tilde{y}$, $\tilde{x}^{q^J}$, $\tilde{y}^{q^J}$, we get $a_{31} = a_{32} = a_{33} = a_{34} = 0$.

\item \textbf{Subcase} $J = I + I_0$ and $J + I_0 = J_0 + I$.\\
Considering the coefficients of $\tilde{x}$, $\tilde{y}$, $\tilde{x}^{q^J}$, $\tilde{y}^{q^J}$, $\tilde{x}^{q^{I+J_0}}$, $\tilde{y}^{q^{I+J_0}}$, $\tilde{x}^{q^{J+J_0}}$, $\tilde{y}^{q^{J+J_0}}$, we get
$a_{31} = a_{32} = a_{33} = a_{34} = 0$.

\item \textbf{Subcase} $J = I + I_0$ and $J + I_0 \neq J_0 + I$.\\
Considering the coefficients of $\tilde{x}$, $\tilde{y}$, $\tilde{x}^{q^J}$, $\tilde{y}^{q^J}$,  $\tilde{x}^{q^{J+I_0}}$, $\tilde{y}^{q^{J+I_0}}$, we get
 $a_{31} = a_{32} = a_{33} = a_{34} = 0$.

\item \textbf{Subcase} $J = 2I$ and $J\neq I+I_0$.\\
Considering the coefficients of $\tilde{x}$, $\tilde{y}$, $\tilde{x}^{q^J}$, $\tilde{y}^{q^J}$,  $\tilde{x}^{q^{J+J_0}}$, $\tilde{y}^{q^{J+J_0}}$, we get
$a_{31} = a_{32} = a_{33} = a_{34} = 0$.
\end{itemize}

\item \textbf{Case} $I = I_0$ and $J \neq J_0$.\\
\begin{itemize}
    \item \textbf{Subcase} $J\neq I + J_0$ and $J\neq 2I$.\\
Considering the coefficients of $\tilde{x}$, $\tilde{y}$, $\tilde{x}^{q^J}$, $\tilde{y}^{q^J}$, we get
$a_{31} = a_{32} = a_{33} = a_{34} = 0$.

\item \textbf{Subcase} $J = I + J_0$.\\
Considering the coefficients of $\tilde{x}$, $\tilde{y}$, $\tilde{x}^{q^J}$, $\tilde{y}^{q^J}$,  $\tilde{x}^{q^{J+J_0}}$, $\tilde{y}^{q^{J+J_0}}$, we get
$a_{31} = a_{32} = a_{33} = a_{34} = 0$. 

\item \textbf{Subcase} $J = 2I$ and $J_0\neq 3I$.\\
Considering the coefficients of $\tilde{x}$, $\tilde{y}$, $\tilde{x}^{q^J}$, $\tilde{y}^{q^J}$,  $\tilde{x}^{q^{I+J}}$, $\tilde{y}^{q^{I+J}}$, we get
$a_{31} = a_{32} = a_{33} = a_{34} = 0$.

\item \textbf{Subcase} $J = 2I$ and $J_0= 3I$.\\
Considering the coefficients of $\tilde{x}$, $\tilde{y}$, $\tilde{x}^{q^{5I}}$, $\tilde{y}^{q^{5I}}$,  $\tilde{x}^{q^{2I}}$, $\tilde{y}^{q^{2I}}$, we get
$a_{31} = a_{32} = a_{33} = a_{34} = 0$.

\end{itemize}

\end{itemize}

In all the cases listed above,  the matrix \eqref{eq:mat:gammaell} 
is not an element of $\mathrm{GL}(4,\F_{q^n})$. Hence the two sets $U_{\alpha, \beta, \gamma}^{I,J,n}$ and $U_{\overline{\alpha}, \overline{\beta}, \overline{\gamma}}^{I_0,J_0,n}$ are not $\Gamma\mathrm{L}(4,q^n)$-equivalent, for any choice of elements $\alpha,\beta,\gamma,\overline{\alpha},\overline{\beta},\overline{\gamma}\in \F_{q^n}^*$.
\item  $(I,J)= (I_0, J_0)$ and 
    $$
    \begin{cases}
    X = \rho^{q^K}X^{q^{2K}} + \vartheta^{q^K}Y^{q^{2K}} + \sigma^{q^K}Y^{q^K}\\
    X = \mu^{q^K}Y + \nu^{q^K}X^{q^K} + \xi^{q^K}Y^{q^{2K}}
    \end{cases}
    $$
    has no solutions $(x,y)\in \mathbb{F}_{q^n}^2\setminus\{(0,0)\}$.

From \eqref{eq:equiv:1},  for the coefficients of $$\tilde{x}, \tilde{y}, \tilde{x}^{q^I}, \tilde{y}^{q^I}, \tilde{x}^{q^J}, \tilde{y}^{q^J}, \tilde{x}^{q^{2I}}, \tilde{y}^{q^{2I}}, \tilde{x}^{q^{I+J}}, \tilde{y}^{q^{I+J}}, \tilde{x}^{q^{2J}}, \tilde{y}^{q^{2J}}$$

we obtain the following conditions
\begin{eqnarray}
\label{system:1}
        &&a_{31} = a_{32} = a_{24} = a_{23} = a_{13} = a_{14} = 0\nonumber \\
        &&a_{33} = a_{11}^{q^I}, \quad 
        a_{33}\alpha + a_{34}\gamma - \overline{\alpha}a_{22}^{q^J} = 0, \quad 
        a_{34} = \overline{\alpha}a_{21}^{q^J}, \quad
        a_{34}\beta = a_{12}^{q^I}.
\end{eqnarray}
Then, from \eqref{eq:equiv:2}, we obtain the following system:
\begin{equation}
\label{system:2}
    \begin{cases}
        a_{41} = a_{42} = 0\\
        a_{43} = \overline{\beta}a_{21}^{q^I}\\
        a_{43}\alpha + a_{44}\gamma - a_{12}^{q^J} - \overline{\gamma}a_{22}^{q^J} = 0\\
        a_{44} - a_{11}^{q^J} - \overline{\gamma}a_{21}^{q^J} = 0\\
        a_{44}\beta = \overline{\beta}a_{22}^{q^I}.
    \end{cases}
\end{equation}
Hence, considering the conditions on the coefficients {given by \eqref{system:1} and \eqref{system:2},} 
we have the following:
    \begin{numcases}{}
    a_{11}^{q^I}\alpha + \overline{\alpha}a_{21}^{q^J}\gamma = \overline{\alpha}a_{22}^{q^J} \label{eq:1:gammaell}\\
    \overline{\beta}a_{21}^{q^I}\alpha + \left(a_{11}^{q^J} + \overline{\gamma}a_{21}^{q^J}\right)\gamma = a_{12}^{q^J} + \overline{\gamma}a_{22}^{q^J} \label{eq:2:gammaell}\\
    \left(a_{11}^{q^J} + \overline{\gamma}a_{21}^{q^J}\right)\beta = \overline{\beta}a_{22}^{q^I}. \label{eq:3:gammaell}
    \end{numcases}
As $a_{12}^{q^I} = a_{34}\beta = \beta\overline{\alpha}a_{21}^{q^J}$, Equation \eqref{eq:2:gammaell} can be rewritten as 
\begin{equation}
\label{eq:obtained:1}
    \overline{\beta}a_{21}^{q^I}\alpha + \left(a_{11}^{q^J} + \overline{\gamma}a_{21}^{q^J}\right)\gamma = \beta^{q^K}\overline{\alpha}^{q^K}a_{21}^{q^{J+K}} + \overline{\gamma}a_{22}^{q^J}.
\end{equation}
Moreover, from Equation \eqref{eq:3:gammaell}, we have that
\begin{equation*}
  a_{22}^{q^J} = \frac{\beta^{q^K}}{\overline{\beta}^{q^K}}\left(a_{11}^{q^{J+K}} + \overline{\gamma}^{q^K}a_{21}^{q^{J+K}}\right).
\end{equation*}
Then, from Equation \eqref{eq:1:gammaell}, we obtain
\begin{equation}
\label{eq:obtained:a_{22}}
\frac{\beta^{q^K}}{\overline{\beta}^{q^K}}\left(a_{11}^{q^{J+K}} + \overline{\gamma}^{q^K}a_{21}^{q^{J+K}}\right) = \frac{a_{11}^{q^I}\alpha + \overline{\alpha}a_{21}^{q^J}\gamma}{\overline{\alpha}}
\end{equation}
and substituting in Equation \eqref{eq:obtained:1} we have
\begin{equation}
\label{eq:obtained:2}
    \overline{\beta}a_{21}^{q^I}\alpha + \left(a_{11}^{q^J} + \overline{\gamma}a_{21}^{q^J}\right)\gamma = \beta^{q^K}\overline{\alpha}^{q^K}a_{21}^{q^{J+K}} + \frac{\overline{\gamma}}{\overline{\alpha}}\left(a_{11}^{q^I}\alpha + \overline{\alpha}a_{21}^{q^J}\gamma\right).
\end{equation}

Considering now Equation \eqref{eq:obtained:a_{22}}, we rewrite it as
\begin{equation*}
    \begin{split}
        %
        a_{11}^{q^I} &= \left(\frac{\beta}{\overline{\beta}}\right)^{q^K}\left(\frac{\overline{\alpha}}{\alpha}\right)\left(a_{11}^{q^{J+K}} + \overline{\gamma}^{q^K}a_{21}^{q^{J+K}}\right) - \frac{\overline{\alpha}}{\alpha}\gamma a_{21}^{q^J}.
    \end{split}
\end{equation*}
From this last equation, we have
\begin{equation}
\label{eq:frobenius:1}
           a_{11} = \left(\frac{\beta}{\overline{\beta}}\right)^{q^{K-I}}\left(\frac{\overline{\alpha}}{\alpha}\right)^{q^{-I}}\left(a_{11}^{q^{2K}} + \overline{\gamma}^{q^{K-I}}a_{21}^{q^{2K}}\right) - \left(\frac{\overline{\alpha}}{\alpha}\right)^{q^{-I}}\gamma^{q^{-I}} a_{21}^{q^K}.
\end{equation}
From Equation \eqref{eq:obtained:2}, we obtain instead
\begin{equation*}
  \begin{split}
 a_{11}^{q^I} &= \frac{\overline{\alpha}\overline{\beta}}{\overline{\gamma}}a_{21}^{q^I} + \frac{\gamma \overline{\alpha}}{\overline{\gamma}\alpha}a_{11}^{q^J} - \frac{\beta^{q^K}\overline{\alpha}^{q^K+1}}{\overline{\gamma}\alpha}a_{21}^{q^{J+K}}.
    \end{split}
\end{equation*}
From this last equation, we then have
\begin{equation}
\label{eq:frobenius:2}
           a_{11} = \left(\frac{\overline{\alpha}\overline{\beta}}{\overline{\gamma}}\right)^{q^{-I}}a_{21} + \left(\frac{\gamma \overline{\alpha}}{\overline{\gamma}\alpha}\right)^{q^{-I}}a_{11}^{q^K} - \left(\frac{\beta^{q^K}\overline{\alpha}^{q^K+1}}{\overline{\gamma}\alpha}\right)^{q^{-I}}a_{21}^{q^{2K}}.
\end{equation}

Now, with the notations introduced in \eqref{eq:not:1}, we can rewrite Equations \eqref{eq:frobenius:1} and  \eqref{eq:frobenius:2} as
\begin{equation}\label{O'Sistemone}
    \begin{cases}
    a_{11} = \rho^{q^K}a_{11}^{q^{2K}} + \vartheta^{q^K}a_{21}^{q^{2K}} + \sigma^{q^K}a_{21}^{q^K}\\
    a_{11} = \mu^{q^K}a_{21} + \nu^{q^K}a_{11}^{q^K} + \xi^{q^K}a_{21}^{q^{2K}}. 
    \end{cases}
\end{equation}
If the system above has the unique solution $(0,0)$ in $\mathbb{F}_{q^n}^2$, we also get $a_{22}=0=a_{12}=a_{44}=a_{34}=a_{33}$, a contradiction to $\mathfrak{N}\in \mathrm{GL}(4,\mathbb{F}_{q^n})$.
\end{enumerate}
\end{proof}

To determine whether System \eqref{O'Sistemone} has a non-trivial solution in $\mathbb{F}_{q^n}^2$ is not an easy task. In the following, we only provide an example which shows that non-trivial solutions of \eqref{O'Sistemone} could  yield the equivalence between  two sets $U_{\alpha, \beta, \gamma}^{I,J,n}$ and $U_{\overline{\alpha}, \overline{\beta}, \overline{\gamma}}^{I,J,n}$.

\begin{corollary}
\label{cor:equivalence:3}
Let $U_{\alpha, \beta, \gamma}^{I,J,n}$ and $U_{\overline{\alpha}, \overline{\beta}, \overline{\gamma}}^{I,J,n}$ be two scattered sets as above, with notations as in Theorem \ref{thm:equivalence}. 
If $\rho = \nu^{q^K+1}$ and $\nu$ is a $\left(q^K-1\right)$-th power in $\F_{q^n}$, then $U_{\alpha, \beta, \gamma}^{I,J,n}$ and $U_{\overline{\alpha}, \overline{\beta}, \overline{\gamma}}^{I,J,n}$ are $\Gamma\mathrm{L}(4,q^n)$-equivalent.
\end{corollary}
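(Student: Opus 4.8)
The plan is to exploit the analysis already carried out in the proof of Theorem~\ref{thm:equivalence}. There, in case~(b) with $(I,J)=(I_0,J_0)$, it is shown that any $\Gamma\mathrm{L}(4,q^n)$-equivalence forces the entries of $\mathfrak{N}$ to be the ones prescribed by \eqref{system:1}--\eqref{system:2}, all determined by the pair $(a_{11},a_{21})$, which in turn must satisfy System~\eqref{O'Sistemone}. The passage from \eqref{eq:1:gammaell}--\eqref{eq:3:gammaell} to \eqref{eq:frobenius:1}--\eqref{eq:frobenius:2} consists only of equivalent rewritings (using \eqref{eq:3:gammaell} to express $a_{22}$ and back-substituting), so these implications are reversible: any nontrivial solution $(a_{11},a_{21})$ of \eqref{O'Sistemone} reconstructs, through \eqref{system:1}--\eqref{system:2}, a matrix $\mathfrak{N}$ realizing the equivalence, as long as $\mathfrak{N}\in\mathrm{GL}(4,q^n)$. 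Hence it suffices to produce one nonzero solution of \eqref{O'Sistemone} giving an invertible matrix.

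First I would search for a solution with $a_{21}=0$. With this choice System~\eqref{O'Sistemone} collapses to the single pair of conditions $a_{11}^{q^K-1}=\nu^{-q^K}$ and $a_{11}^{q^{2K}-1}=\rho^{-q^K}$. Since $\nu$ is a $(q^K-1)$-th power, writing $\nu=w^{q^K-1}$ and setting $a_{11}:=w^{-q^K}$ makes $a_{11}^{q^K-1}=\nu^{-q^K}$ hold with $a_{11}\neq 0$, so the second equation of \eqref{O'Sistemone} is satisfied. For the first equation I would factor $a_{11}^{q^{2K}-1}=(a_{11}^{q^K-1})^{q^K+1}=\nu^{-q^K(q^K+1)}$; by the standing hypothesis $\rho=\nu^{q^K+1}$ this equals $\rho^{-q^K}$, so the first equation holds automatically. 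It is precisely the two hypotheses of the statement that make both equations compatible for this choice.

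It then remains to reconstruct $\mathfrak{N}$ and check its invertibility. Substituting $a_{21}=0$ into \eqref{system:1}--\eqref{system:2} kills every off-diagonal entry ($a_{12}=a_{34}=a_{43}=0$, the others being already zero) and leaves $a_{33}=a_{11}^{q^I}$, $a_{44}=a_{11}^{q^J}$, and $a_{22}=(a_{11}^{q^J}\beta/\overline{\beta})^{q^{-I}}$. Thus $\mathfrak{N}=\mathrm{diag}(a_{11},a_{22},a_{11}^{q^I},a_{11}^{q^J})$, whose diagonal entries are all nonzero because $a_{11}\neq 0$ and $\beta,\overline{\beta}\in\F_{q^n}^*$. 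Consequently $\mathfrak{N}\in\mathrm{GL}(4,q^n)$, and taking $\sigma=\mathrm{id}$ the identity \eqref{eq:gammaell:1} holds, so the two sets are $\mathrm{GL}(4,q^n)$-equivalent, and in particular $\Gamma\mathrm{L}(4,q^n)$-equivalent.

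I expect the only genuinely delicate point to be the reversibility asserted in the first paragraph: I must confirm that the manipulations in the proof of Theorem~\ref{thm:equivalence} used to derive \eqref{O'Sistemone} are equivalences rather than mere implications, so that a solution of \eqref{O'Sistemone} truly recovers all of \eqref{system:1}--\eqref{system:2} (and hence makes the left-hand sides of \eqref{eq:equiv:1}--\eqref{eq:equiv:2} identically zero). Granting this, the explicit diagonal matrix furnishes the equivalence with essentially no further computation, and the entire content of the corollary reduces to observing that the choice $a_{21}=0$ linearizes \eqref{O'Sistemone} in a way dictated exactly by the hypotheses $\rho=\nu^{q^K+1}$ and $\nu$ being a $(q^K-1)$-th power.
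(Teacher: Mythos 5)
Your proposal is correct and follows essentially the same route as the paper: both set $a_{21}=0$, pick $a_{11}$ with $a_{11}^{q^K-1}=\nu^{-q^K}$ (possible since $\nu$ is a $(q^K-1)$-th power), use $\rho=\nu^{q^K+1}$ to verify the remaining compatibility condition, and reconstruct the (diagonal) matrix $\mathfrak{N}$ from \eqref{system:1}--\eqref{system:2}. The reversibility point you flag does hold, since \eqref{system:1}--\eqref{system:2} are exactly the vanishing of all coefficients in \eqref{eq:equiv:1}--\eqref{eq:equiv:2} and \eqref{O'Sistemone} arises from them by eliminating $a_{22}$ via \eqref{eq:3:gammaell}; the paper implicitly uses the same fact.
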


\begin{proof}
Since $\rho = \nu^{q^K+1}$ and $\nu$ is a $\left(q^K-1\right)$-th power in $\mathbb{F}_{q^n}$, $(a_{11},a_{21})=\Big(\sqrt[q^{K}-1]{1/\nu^{q^k}},0\Big)$ is a solution of  System \eqref{O'Sistemone}. 
From \eqref{system:1} and \eqref{system:2} 
\begin{eqnarray*}a_{12}=a_{13}=a_{14}=a_{21}=a_{23}=a_{24}=a_{31} = a_{32} = a_{34} = a_{41}=a_{42}=a_{43}=0\\
        a_{33} = a_{11}^{q^I}, \qquad 
        a_{33}\alpha = \overline{\alpha}a_{22}^{q^J}, \qquad 
             a_{44}\gamma = \overline{\gamma}a_{22}^{q^J}, \qquad 
        a_{44} = a_{11}^{q^J}, \qquad 
        a_{44}\beta = \overline{\beta}a_{22}^{q^I},
\end{eqnarray*}
that is, $a_{33} = a_{11}^{q^I}$, $a_{44} = a_{11}^{q^J}$, $a_{22}=\left(\frac{\gamma}{\overline{\gamma}}\right)^{q^{-J}}a_{11}$. The last two conditions read 

$$a_{11}^{q^K-1}=\frac{1}{\nu^{q^K}} \textrm{ and } a_{11}^{q^K-1}=\left(\frac{\overline{\beta}}{\beta}\right)^{q^{-I}}\left(\frac{\gamma}{\overline{\gamma}}\right)^{q^{-K-I}}$$
and they are compatible by our assumptions on $\rho$ and $\nu$.
\end{proof}

\subsection{The ``ordinary" duality}

The map $\Tr_{{q^n}/q}(X_0X_3-X_1X_2)$ defines a quadratic form of
$\Fn^4$ (regarded as $\fq$-vector space) over $\F_q$. The polar form
associated with such a quadratic form is
$\Tr_{{q^n}/q}(\sigma({\bf X},{\bf Y}))$, where $$\sigma ({\bf X},{\bf Y})=\left((X_0,X_1,X_2,X_3),(Y_0,Y_1,Y_2,Y_3)\right)=X_0Y_3+X_3Y_0-X_1Y_2-X_2Y_1.$$ 

If $f\in\cL_{n,q}[X]$ we will denote by $f^\top$ the  \textbf{adjoint} of $f$ with
respect to the $\fq$-bilinear form $\Tr_{q^n/q}(xy)$ on $\Fn$, that is defined by

$$\Tr_{q^n/q}(xf(y))=\Tr_{q^n/q}(yf^\top(x)) \quad \mbox{ for any } x,y \in \Fn.$$

\begin{comment}
Note that if $\F_{q'}$ is a subfield of $\F_q$ then the adjoint of
$f$ (as $\F_{q'}$--linear map) with respect to the
$\F_{q'}$--bilinear form $\Tr_{q^n/q'}$ is $f^t$ as well; indeed by
\cite[Thm. 2.26]{LN}, we have
$$\Tr_{q^n/q'}(xf(y))=\Tr_{q/q'}(\Tr_{q^n/q}(xf(y)))=\Tr_{q/q'}(\Tr_{q^n/q}(yf^t(x)))=\Tr_{q^n/q'}(yf^t(x)).$$
\end{comment}

Let $h_1,h_2,g_1,g_2\in \cL_{n,q}[X]$, and let 
$$X=\{\left(x,y,h_1(x)+h_2(y),g_1(x)+g_2(y)\right) : x,y\in\F_{q^n}\}$$
be a $2n$-dimensional $\fq$-subspace of $\Fn^4$. Straightforward computations show that the orthogonal complement of $X$ with respect to the
$\F_q$-bilinear form $\Tr_{q^n/q}(\sigma({\bf X},{\bf Y}))$ is \[X^\perp=\left\{\left(x,y,g_2^\top(x)-h_2^\top(y),h_1^\top(y)-g_1^\top(x)\right) \ : \ x,y\in\F_{q^n}\right\}.\]
Hence, the orthogonal complement of
$$        U_{\alpha, \beta, \gamma}^{I,J,n}:=\left\{\left(x,y,x^{q^I}+\alpha y^{q^J},x^{q^J}+\beta y^{q^I} + \gamma y^{q^J}\right)  : x,y\in \F_{q^n}\right\} $$
is
$$
        (U_{\alpha, \beta, \gamma}^{I,J,n}) ^{\perp}:=\left\{\left(x,y,\beta^{q^{n-I}}x^{q^{n-I}}+\gamma ^{q^{n-J}}x^{q^{n-J}}-\alpha ^{q^{n-J}}y^{q^{n-J}},y^{q^{n-I}}-x^{q^{n-J}}\right) :  x,y\in \F_{q^n}\right\},
$$
which is equivalent to 
$$        
\left\{\left(x,y,x^{q^{n-I}}-y^{q^{n-J}},x^{q^{n-J}}- \frac{\beta^{q^{n-I}}}{\alpha ^{q^{n-J}}}y^{q^{n-I}}-\frac{\gamma ^{q^{n-J}}}{\alpha ^{q^{n-J}}}y^{q^{n-J}}\right) :  x,y\in \F_{q^n}\right\}=U_{\overline{\alpha}, \overline{\beta}, \overline{\gamma}}^{I_0,J_0,n}, $$
where 
$$ I_0:=n-I, \quad J_0:=n-J, \quad \overline{\alpha}:=-1, \quad \overline{\beta}:=-\frac{\beta^{q^{n-I}}}{\alpha ^{q^{n-J}}}, \quad \overline{\gamma}:=-\frac{\gamma ^{q^{n-J}}}{\alpha ^{q^{n-J}}}.$$

\section{Open problems}\label{sec:conclusions}

In this paper, we have provided an infinite family $U_{\alpha, \beta, \gamma}^{I,J,n}$, as in Definition \ref{defn:sets}, of $2n$-dimensional (indecomposable) exceptional scattered subspaces in $V(4,q^n)$; see Theorem \ref{thm:scattered:1} and Theorem \ref{thm:exceptional_scattered}. We have also derived a condition on their evasivity with respect to $2$-dimensional $\Fn$-subspces in Theorem \ref{thm:evasive}, depending on $\max\{I,J\}$. All these results need the additional hypothesis on the polynomial $P_{\alpha, \beta, \gamma}^{I,J,n}(X)$ given in \eqref{Eq:P} having no roots in $\Fn$. We have observed in Remark \ref{rem:no_roots} that we can easily find some conditions to ensure this. However, this is far from characterizing such polynomials and finding the exact number of $\alpha,\beta,\gamma$ such that $P_{\alpha, \beta, \gamma}^{I,J,n}(X)$ has no roots in $\Fn$. 

\begin{problem} For any pair $1\leq I,J<n $, find explicit necessary conditions on  $\alpha,\beta,\gamma\in\Fn^*$ such that the polynomial $P_{\alpha, \beta, \gamma}^{I,J,n}(X)$ has no roots in $\Fn$. Furthermore, determine the exact number of such triples.
\end{problem}

Necessary and sufficient conditions for this to hold were given in \cite[Theorem 8]{mcguire2019characterization} and \cite[Theorem 9]{kim2021solving}, but these are not explicit, and they do not seem to help in the counting.

We have also showed, in Theorem \ref{thm:scattered:2}, that the condition on the polynomial $P_{\alpha, \beta, \gamma}^{I,J,n}(X)$ not having roots in $\Fn$ is necessary, when we are in the small $q$-degree regime, that is, when $0\leq I,J\leq n/4$. The techniques used are not suitable for showing that this result holds also for larger values of $I,J$. However, we have no concrete counterexamples indicating that this is not true.

\begin{problem} Extend the result of Theorem \ref{thm:scattered:2} to larger $q$-degree regimes, that is when $0\leq I,J\leq n-1$. 
\end{problem}

Finally, in Section \ref{sec:equivalence}, we analyzed the $\Gamma\mathrm{L}(4,q^n)$-equivalence of the $\fq$-subspaces $U_{\alpha, \beta, \gamma}^{I,J,n}$. In this paper we found some sufficient conditions for equivalence (Theorem \ref{thm:equivalence}) and inequivalence (Corollary \ref{cor:equivalence:3}). However, the picture is far to be complete.

\begin{problem}
 Complete the study of $\Gamma\mathrm{L}(4,q^n)$-equivalence of the  $\fq$-subspaces $U_{\alpha, \beta, \gamma}^{I,J,n}$.
\end{problem}

\section*{Acknowledgments}
This research was supported by the Italian National Group for Algebraic and Geometric Structures and their Applications (GNSAGA - INdAM).

\bibliographystyle{abbrv}
\bibliography{biblio.bib}

\end{document}